\newtheorem{theorem}{Theorem}[section]
\newtheorem{proposition}[theorem]{Proposition}
\newtheorem{lemma}[theorem]{Lemma}
\newtheorem{corollary}[theorem]{Corollary}
\theoremstyle{definition}
\newtheorem{definition}[theorem]{Definition}
\theoremstyle{remark}
\newtheorem{remark}[theorem]{Remark}
\newtheorem{example}[theorem]{Example}
\newenvironment{proof}{\textbf{Proof:}}{\hfill$\square$}
\newcommand{\Stab}{\textup{Stab}} 
\title{Structural rigidity and flexibility using graphs of groups}
\author{Joannes Vermant and Klara Stokes}
\begin{document}
\maketitle
\begin{abstract}
In structural rigidity, one studies frameworks of bars and joints in Euclidean space. Such a framework is an articulated structure consisting of rigid bars, joined together at joints around which the bars may rotate. In this paper, we will describe articulated motions of realisations of hypergraphs that uses the terminology of graph of groups, and describe the motions of such a framework using group theory. Our approach allows to model a variety of situations, such as parallel redrawings, scenes, polytopes, realisations of graphs on surfaces, and even unique colourability of graphs. This approach allows a concise description of various dualities in rigidity theory. We also provide a lower bound on the dimension of the infinitesimal motions of such a framework in the special case when the underlying group is a Lie group. 
\end{abstract}

\section{Introduction}
\label{sec:intro}

An articulated motion is a motion of a structure that is assembled from several rigid parts using different kinds of joints. The research area of articulated motions has a long history and contains many important results. For example, Cauchy's theorem in geometry says that two convex polyhedra with congruent faces must be congruent.``The molecular theorem'', proved by Katoh and Tanigawa \cite{MolecularConjecture} gives a counting condition for the rigidity of panel and hinge frameworks. The dual statement in three dimensions gives a counting condition for the rigidity of  bar and revolute-joint frameworks with applications to the problem in chemistry of determining the rigidity of molecules. The Geiringer-Laman theorem \cite{PollaczekGeiringer1927,Laman1970} gives a counting condition for the rigidity of bar and pin-joint frameworks in the Euclidean plane, but the question on how to generalise it to three or more dimensions remains unsolved albeit decades of serious intents. 

 
In this article we propose to forget the space in which the articulated structure moves and instead study articulated motion using group theory only. We will model articulated structures using graphs of groups. A graph of groups is a graph with groups at the vertices and the edges, together with monomorphisms from each edge group to the groups at the end vertices of the edge. 
When two geometric objects $x$ and $y$ are incident, this phenomenon occurs naturally: the stabiliser of the incidence of $x$ and $y$ is contained as a subgroup in the stabilisers of $x$ and $y$. 

We suggest the combinatorial study of motions of hypergraphs realised as a graph of groups. We now give a brief motivation of the approach, to make the ideas more easily digestible. To model motions, we will make use of the set of incidences of a graph $\Gamma=(V,E)$, this is the set
\begin{equation*}
   I =  \{\{v,e\} \subseteq V\cup E ~ \vert ~ v\in e\},
\end{equation*}
and we will use the notation $v*e$ instead of $\{v,e\}$ for incidences.
Suppose now that we have two equivalent realisations of a graph $\Gamma=(V,E)$ in the Euclidean plane, $p_1, p_2: V\rightarrow \mathbb{R}^{2}$, where equivalent means that the Euclidean distance between any two adjacent vertices are the same in both realisations. Then, for all $v\in V$, $e\in E$ and $i=v*e\in I$, there are then group elements $g_{x}\in SE(2)$ such that 
\begin{equation*}  
\begin{array}{ccc}
    g_v p_1(v)= p_2(v),& & g_e p_1(e) = p_2(e)\\
   g_i p_1(v)= p_2(v),& & g_i p_1(e) = p_2(e),
\end{array} 
\end{equation*}
where $p_n(v)$ is the point assigned to the vertex $v$ and $p_n(e)$ is the oriented line segment assigned to the edge $e$ for $n\in \{1,2\}$. See Figure \ref{fig:motion} for an illustration.

\begin{figure}
\begin{center}
    \begin{tikzpicture}
    \node [] (P) at (-0.5,3) {{$v$}};
    \node [] (L) at (1,2.75) {{$e$}};
    \node [] (L') at (0.2,3.5) {{$i=v*e$}};
    \node [circle,fill,inner sep=2pt] (A) at (0,0) {};
    \node [circle,fill,inner sep=2pt] (B) at (0,3) {};
    \node [circle,fill,inner sep=2pt] (C) at (2,3) {};
    \node [circle,fill,inner sep=2pt] (D) at (2,0) {};
    \node[ draw=none] (mid1) at (1,2.95) {};
    \node[ draw=none] (I) at (0.5,3) {};
    \node[ draw=none] (begin1) at (0.2,2.95) {};
    \draw [line width=1pt] (A) -- (B) -- (C) -- (D) --(A);
    \node [circle,fill,inner sep=2pt] [right] (D) at (4,0) {};
    \node [circle,fill,inner sep=2pt] (E) at (5.4383,2.6327) {};
    \node [circle,fill,inner sep=2pt] (F) at (7.4383,2.6327) {};
    \node [circle,fill,inner sep=2pt] (G) at (6,0) {};
    \node[ draw=none] (mid2) at (6.4383,2.58) {};
    \node[ draw=none] (begin2) at (5.8383,2.58) {};
    \draw [line width=1pt] (D) -- (E) -- (F) -- (G) --(D);
    \path[]
        (B) edge[draw=black, ->, bend right] node [right] [below,midway] {\textcolor{black}{$g_v$}} (E)
        (mid1) edge[draw=black, ->, bend left] node [left] [above,midway]{\textcolor{black}{$g_e$}} (mid2)
        (begin1) edge[draw=black, ->, bend left] node [left] [below, midway]{\textcolor{black}{$g_i$}} (begin2);
    \end{tikzpicture}
    \caption{Part of a motion between two realisations}\label{fig:motion}
        \end{center}

    \end{figure}
The equations above imply that $g_i^{-1} g_v$ stabilises $p_1(v)$ and $g_{i}^{-1}g_e$ stabilises $p_1(e)$. The group elements $g_x$ exist precisely because the two realisations $p_1$ and $p_2$ are equivalent, and conversely group elements satisfying these conditions yield equivalent frameworks, as we show in Section \ref{sec: bar-joints}. Thus we may forget the geometric realisations $p_1$ and $p_2$; remembering only the stabilisers we can still describe the equivalent frameworks to a bar-joint framework, using only the graph and these groups. 

We model the rigidity problem of a realisation of a hypergraph $\Gamma=(V,E)$ in a group $G$ by assigning subgroups of $G$ to the vertices and edges of the incidence graph of $\Gamma$, thereby defining the realisation of a hypergraph in terms of a graph of groups. We define a motion of a graph of groups, and say what it means for such a realisation to be rigid. 

We believe that our new approach to structural rigidity will serve as motivation for combining rigidity theory with geometric group theory. The article is written without assuming previous knowledge about geometric group theory.
Our work is motivated by the work of many other authors who studied the rigidity of articulated structures in different spaces.                       

Gortler, Gotsman, Liu and Thurston developed a theory for structural rigidity in affine space \cite{AffineRigidity} in terms of a monoid $M$ acting on Euclidean space $\mathbb{R}^{n}$. 
They defined a framework of a hypergraph $\Gamma= (V,E)$ to be a function $p: V\rightarrow \mathbb{R}^{n}$. 
Two  frameworks $p,q: V\rightarrow \mathbb{R}^{n}$ are said to be pre-equivalent if for every hyperedge $e$ there exists a $g_e\in M$ such that  
\begin{equation}\label{affine-constraint.}
    g_e p(v)=q(v) \text{ for every } v\in e.
\end{equation}
Two frameworks $p,q:V\rightarrow \mathbb{R}^{n}$ are said to be congruent if there exists a $g\in M$ such that 
\begin{equation}\label{affine-constraint2}
    g p(v)=q(v) \text{ for every } v\in V.
\end{equation}
A framework is then called rigid if there exists a neighbourhood of $p$ such that every framework which is pre-equivalent to $p$ is congruent to $p$. When $M$ is a group, the notion of pre-equivalence  becomes an equivalence relation. 

Scherck and Mathis \cite{Decomposition} have considered various symmetry groups inside of the Euclidean group associated to different CAD constraints, which they used to develop a decomposition algorithm. In their model, if the angle between two lines is constrained to be conserved, then this pair of lines would get the group of similarities associated to it. If instead the distance between two points were to be constrained, then one would consider the Euclidean group. Based on the type of constraint (angle conservation between lines, distance constraint between points, incidence conservation, etc.) the group is varied. The authors developed an algorithm to decompose the system into various subsystems, for which it is possible to check unique realisability (up to a group action).

Stacey, Mahony, and Trumpf \cite{SymmetryinRigidity}, give general definitions for rigidity using topological groups. Their setup is as follows. The set of frameworks is a Hausdorff topological space $\mathcal{M}$, with a topological group $G$ acting continuously on $\mathcal{M}$. We are given an invariant function
\begin{equation*}
    h:\mathcal{M}\rightarrow \mathcal{Y},
\end{equation*}
where $\mathcal{Y}$ is some topological space, whose elements are thought of as sensor measurements. Then, a configuration is defined as a fixed state $x\in \mathcal{M}$ and a formation is defined as a fiber $\mathcal{F}(y)=h^{-1}(y)$. Then, if $\mathcal{F}(y)$ is $G$-invariant, $\mathcal{F}(y)$ is said to be locally rigid if for every $x\in \mathcal{F}(y)$ there exists an open set $U_x\subset \mathcal{M}$ such that any $y\in U_x \cap \mathcal{F}(y)$  is congruent to $x$. If $U$ can be taken to be $\mathcal{M}$, $\mathcal{F}(y)$ is called globally rigid. A notion of path rigidity was introduced as well, which informally says that there is a path in the group moving one framework in a formation to any other formation.

The first two authors later extended these notions to Lie groups in \cite{stacey2016generalised}, with an extension of infinitesimal rigidity as well. In this article, the space $\mathcal{Y}$ is assumed to be a manifold, and the measurement map $h$ is assumed to be smooth. Infinitesimal motions are given by elements in $\ker(dh)$, where $dh$ is the differential of the map, and the framework is infinitesimally rigid if $\dim(\ker(dh))$ has the same dimension as the group.

In his thesis, and later together with several authors, Dewar studied the rigidity of frameworks in non-Euclidean normed spaces \cite{dewarthesis}.  For example, in the paper \cite{NormedSpaces}, Lie groups are  used to prove an equivalence between infinitesimal rigidity and local rigidity. Furthermore, the bounds on the dimension of the trivial infinitesimal motions of frameworks in normed spaces comes from the Lie group.

A combinatorial characterisation of rigidity of generic bar and joint frameworks realised on surfaces with non-trivial space of infinitesimal motions was given by Nixon, Owen and Power \cite{NiOwPo2012,NiOwPo2014}.
See also work by 
Cruickshank, Guler, Jackson, and Nixon, where they also characterise rigidity of bar-joint frameworks under the assumption that certain vertices are restricted to lie in an affine subspace \cite{Linearconstraints}.

The article is structured as follows. In Section \ref{sec:bas_def}, we establish notation and define the concepts that are needed later in the paper. Section \ref{sec:Defs_hypergraphs} contains the main ideas of the paper. In Section \ref{sec:examples} we show the usefulness of our approach by applying it to various examples. 

\section{Preliminaries}
\label{sec:bas_def}

\subsection{Group theory and graphs of groups}\label{sec:bas_def_groups}
Let $G$ be a group acting on a set $X$. The stabiliser of an element $x\in X$ is the subgroup $\textup{Stab}(x)= \{g\in G~\vert~ g \cdot x = x\}$. 
Given a subgroup $H\leq G$ we denote the action of conjugation by $H^{\sigma}= \sigma H \sigma^{-1}$, and the normaliser of $H$ is the subgroup $$N_G(H):= \{g\in G~\vert ~ H^g = H \}\leq G.$$ 
A subgroup is said to be self-normalizing if $N_G(H)=H$.
If a group $G$ acts on a space $X$, then for a subgroup $H\leq G$, the fix of $H$ is defined as
\begin{equation*}
    \textup{Fix}(H):= \{x\in X~\vert~ g\cdot x=x \text{ for all } g\in H\}.
\end{equation*}

A graph of groups is a graph $(V,E)$ with a group $H_v$ associated to every vertex $v\in V$ and to every edge $e=(u,v)\in E$, together with two monomorphisms from every edge group $H_{(u,v)}$ to the two groups $H_u$ and $H_v$ at its end-points.
The fundamental group $\pi_1$ of a graph of groups can be computed using amalgams and HNN-extensions from the vertex groups over the edge groups.
The structure theorem says that $\pi_1$ acts on a universal covering tree $X$ of the graph $(V,E)$ and that the quotient $X/\pi_1$ is isomorphic to $(V,E)$. When the vertex groups are trivial, then $X$ is simply the universal cover of the graph and $\pi_1$ is the first homotopy group of the graph.

In the spirit of Tits, who started the tradition of studying groups by studying their action on incidence geometries, Bass and Serre introduced graphs of groups in the 1970's, inspired by Ihara's proof that every torsion-free discrete subgroup of $\mathbb{SL}_2(\mathbb{Q}_p)$ is a free group \cite{Serre1980}. Their original motivation was to study algebraic groups whose Bruhat-Tits buildings are trees. Today Bass-Serre theory is fundamental in geometric group theory. Important information about the group can be obtained by letting it act on trees, and more generally, on buildings and diagram geometries.

\subsection{Lie groups} \label{sec:bas_def_lie}
For a general reference on Lie groups, see \cite{Kirillov} and references therein. 

If $X$ is a manifold and $p\in X$, then $T_pX$ denotes the tangent space at $p$. Given a smooth map $\phi: X\rightarrow Y$, we denote the pushforward (or differential) by $d\phi_p : T_pX\rightarrow T_{\phi(p)}Y$. Whenever a map has a subscript, say for example $f_1:X\rightarrow Y$, we will write $d(f_1)_{p}$ to avoid confusion between the index $1$, and the point $p$ whose tangent space is the domain of the differential.  We recall that a map is an immersion if $d\phi_p$ is injective for all $p\in X$. A map is called an embedding if it is an immersion and a homeomorphism onto its image, when considering the image as a subspace of the codomain. 
We recall that a Lie group is a group $G$ which is also a smooth manifold, such that the multiplication and inverse maps are smooth functions. 

\begin{definition}
A subgroup $H$ of a Lie group $G$ is called a Lie subgroup if $H$ is an immersed submanifold of $G$. A subgroup $H$ of a Lie group $G$ is called a closed Lie subgroup if $H$ is an embedded submanifold of $G$. 
\end{definition}

\begin{theorem} {\cite[Theorem 2.9]{Kirillov} }
A subgroup of a Lie group $G$ which is closed (topologically) is a closed Lie subgroup.
\end{theorem}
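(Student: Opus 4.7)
The plan is to prove Cartan's closed subgroup theorem via the exponential map. Let $H$ be a topologically closed subgroup of $G$ with Lie algebra $\mathfrak{g}=T_eG$. First I would define the candidate Lie algebra of $H$ by
\begin{equation*}
    \mathfrak{h}:=\{X\in\mathfrak{g}~\vert~\exp(tX)\in H\text{ for all } t\in\mathbb{R}\},
\end{equation*}
and check that $\mathfrak{h}$ is a linear subspace. Closure under scalar multiplication is immediate from the definition. Closure under addition is the first nontrivial step: I would invoke the Trotter product formula $\exp(X+Y)=\lim_{n\to\infty}\bigl(\exp(X/n)\exp(Y/n)\bigr)^n$, noting that each term in the sequence lies in $H$ (since $X,Y\in\mathfrak{h}$), so by topological closedness of $H$ the limit also lies in $H$; applying this with $tX$ and $tY$ in place of $X,Y$ shows $X+Y\in\mathfrak{h}$.

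The core geometric step is to produce a slice chart at the identity. I would fix any vector space complement $\mathfrak{m}$ so that $\mathfrak{g}=\mathfrak{h}\oplus\mathfrak{m}$, and consider
\begin{equation*}
    \phi:\mathfrak{h}\oplus\mathfrak{m}\to G,\qquad \phi(X,Y)=\exp(X)\exp(Y).
\end{equation*}
A direct differential computation shows $d\phi_{(0,0)}$ is the identity on $\mathfrak{g}$, so $\phi$ is a local diffeomorphism from a neighborhood $U_\mathfrak{h}\times U_\mathfrak{m}$ of $(0,0)$ onto a neighborhood $W$ of $e$ in $G$. The claim to be established is that, after possibly shrinking $U_\mathfrak{m}$, one has $\exp(Y)\in H\iff Y=0$ for $Y\in U_\mathfrak{m}$; combined with $\exp(U_\mathfrak{h})\subseteq H$, this produces a submanifold chart showing $H\cap W=\phi(U_\mathfrak{h}\times\{0\})$.

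The main obstacle is proving that slice claim, and this is the step I would spend the most care on. The argument is by contradiction: if no such $U_\mathfrak{m}$ exists, there is a sequence $Y_n\in\mathfrak{m}\setminus\{0\}$ with $Y_n\to 0$ and $\exp(Y_n)\in H$. Fix a norm on $\mathfrak{m}$; by compactness of the unit sphere, $Y_n/\|Y_n\|$ has a convergent subsequence with limit $Y\in\mathfrak{m}$, $\|Y\|=1$. For any $t\in\mathbb{R}$, write $t/\|Y_n\|=k_n+\varepsilon_n$ with $k_n\in\mathbb{Z}$ and $|\varepsilon_n|\le 1$; then
\begin{equation*}
    \exp(Y_n)^{k_n}=\exp(k_n Y_n)\longrightarrow \exp(tY),
\end{equation*}
because $k_n\|Y_n\|\to t$ and $Y_n/\|Y_n\|\to Y$. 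Since each $\exp(Y_n)^{k_n}\in H$ and $H$ is closed, $\exp(tY)\in H$ for every $t$, so $Y\in\mathfrak{h}\cap\mathfrak{m}=\{0\}$, contradicting $\|Y\|=1$.

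Finally I would globalize: for $h\in H$, left translation $L_h$ is a diffeomorphism of $G$, and $L_h(H\cap W)=H\cap L_h(W)$ gives a slice chart near $h$, because $H$ is a subgroup. Hence $H$ is an embedded submanifold of $G$, which by the definition given in the paper is precisely a closed Lie subgroup.
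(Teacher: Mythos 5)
Your proof is correct: this is the standard proof of Cartan's closed subgroup theorem, and all the delicate steps (the Trotter-formula argument for additivity of $\mathfrak{h}$, the normalization-and-subsequence argument for the slice claim, and the left-translation globalization) are handled properly. Note that the paper does not prove this statement at all --- it imports it as \cite[Theorem 2.9]{Kirillov} --- so your argument supplies a complete, self-contained proof of a cited background result, following essentially the same route as the standard textbook treatment.
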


To every Lie group $G$ there is an associated Lie algebra, which we denote by $\mathfrak{g}$. It can be identified with the tangent space at the identity, or with the space of left invariant vector fields on $G$. If $H \leq G$ is a Lie subgroup, then its associated Lie algebra $\mathfrak{h}$ is a subalgebra of $\mathfrak{g}$. In general we denote the Lie algebra of a Lie group by the corresponding fraktur font letter. 

\begin{theorem}\label{OrbitsKir}{\cite[Theorem 3.29]{Kirillov}}
Let $G$ be a Lie group, acting on a manifold $M$, through the function;
\begin{equation*}
    \rho: G \times M \rightarrow M
\end{equation*}
and let $\mathfrak{g}$ be the Lie algebra of $G$. Let $p\in M$. Then 
\begin{itemize}
    \item the stabiliser $G_p$ of $p$ is a closed Lie subgroup with Lie algebra $\mathfrak{g}_p=\{x\in \mathfrak{g}~\vert~ d\rho(x)p=0\}$, where $d\rho(x)$ is the vector field on $M$ corresponding to $x$. In other words $d\rho(x)$  is the pushforward of the left invariant vector field associated with $x$, 
    \item{the map $G/G_p \rightarrow M$ is an immersion. The image is the orbit of $p$ under the action. Thus the orbit $O(p)$ is an immersed submanifold of $M$, with tangent space $T_p(O(p))=\mathfrak{g}/\mathfrak{g}_p$}.
\end{itemize}
\end{theorem}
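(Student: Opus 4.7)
The plan is to derive both claims by leveraging the closed subgroup theorem stated immediately above. First I would observe that the orbit map $\phi : G \to M$, $g \mapsto \rho(g,p)$, is continuous, and since $M$ is Hausdorff, the singleton $\{p\}$ is closed, whence $G_p = \phi^{-1}(p)$ is topologically closed in $G$. The cited closed subgroup theorem then upgrades this to the statement that $G_p$ is a closed Lie subgroup, i.e., an embedded submanifold that is also a subgroup of $G$.

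To identify $\mathfrak{g}_p$, I would compute the differential $d\phi_e : \mathfrak{g} \to T_p M$. By the definition of the fundamental vector field given in the theorem statement, $d\phi_e(x) = d\rho(x)_p$. One containment is immediate: if $x \in \mathfrak{g}_p$, then $\exp(tx) \in G_p$ for all $t$, so $\phi(\exp(tx))$ is the constant curve at $p$, and differentiating at $t=0$ gives $d\rho(x)_p = 0$. For the converse I would use that the curve $\gamma(t) = \exp(tx) \cdot p$ is the integral curve of the vector field $d\rho(x)$ starting at $p$, because $d\rho(x)$ is the pushforward of a left-invariant vector field under the orbit map. If this vector field vanishes at $p$, then by uniqueness of solutions of ordinary differential equations, $\gamma$ must be the constant curve at $p$, so $\exp(tx) \in G_p$ for all $t$ and hence $x \in \mathfrak{g}_p$.

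For the second claim I would use that the quotient $G/G_p$ by a closed Lie subgroup carries a canonical smooth manifold structure for which the projection $\pi : G \to G/G_p$ is a surjective submersion. Since $\phi$ is constant on left cosets of $G_p$, it descends to a smooth injection $\bar{\phi} : G/G_p \to M$ whose image is exactly the orbit $O(p)$. The differential of $\bar{\phi}$ at the class $[e]$ is the map $\mathfrak{g}/\mathfrak{g}_p \to T_p M$ induced by $d\phi_e$, which is injective precisely because we have quotiented by its kernel $\mathfrak{g}_p$. Left $G$-equivariance of $\bar{\phi}$ transports injectivity of the differential to every other point of $G/G_p$, making $\bar{\phi}$ an immersion, and the identification $T_p O(p) \cong \mathfrak{g}/\mathfrak{g}_p$ follows.

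The main technical obstacle is the reverse inclusion in the identification of $\mathfrak{g}_p$, namely promoting the pointwise vanishing $d\rho(x)_p = 0$ to the statement that the entire one-parameter subgroup $\exp(tx)$ lies inside $G_p$. The key ingredient is recognising $t \mapsto \exp(tx) \cdot p$ as an integral curve of the fundamental vector field, so that ODE uniqueness can be applied. The remainder of the argument is a fairly standard manipulation with Lie group quotients and equivariance.
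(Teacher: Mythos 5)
This statement is imported verbatim from Kirillov's textbook (Theorem 3.29) and the paper gives no proof of it, so there is no in-paper argument to compare against. Your proof is the standard textbook argument and is correct: closedness of $G_p = \phi^{-1}(p)$ combined with the closed subgroup theorem, identification of $\mathfrak{g}_p$ with $\ker(d\phi_e)$ by recognising $t\mapsto \exp(tx)\cdot p$ as an integral curve of the fundamental vector field and invoking ODE uniqueness, and descent of the orbit map to an injective map $G/G_p\to M$ whose differential is injective at $[e]$ and hence, by equivariance, everywhere.
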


\begin{theorem}{\cite[Theorem 2.11]{Kirillov} }\label{fiber_bundle}
Let $G$ be a $n$-dimensional Lie group, and let $H\leq G$ be a closed Lie subgroup of dimension $k$. Then $G/H$ has a natural structure of a smooth manifold of dimension $n-k$ such that
\begin{equation*}
    \pi: G \rightarrow G/H 
\end{equation*}
is a fibre bundle, with fibre diffeomorphic to $H$. 
\end{theorem}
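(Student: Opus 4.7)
The plan is to use the exponential map to construct a local slice transverse to $H$ at the identity, propagate it by left translation around $G$, and read off both the smooth structure on $G/H$ and the local trivialisations of $\pi$ from this construction.

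First I would choose a vector-space complement $\mathfrak{m}$ to $\mathfrak{h}$ in $\mathfrak{g}$, so that $\mathfrak{g} = \mathfrak{h} \oplus \mathfrak{m}$, and define the slice map $\Phi : \mathfrak{m} \times H \to G$ by $\Phi(X, h) = \exp(X)\,h$. A direct computation shows that $d\Phi_{(0,e)}$ is the addition map $\mathfrak{m} \oplus \mathfrak{h} \to \mathfrak{g}$, which is a linear isomorphism. The inverse function theorem then supplies a neighbourhood $V$ of $0$ in $\mathfrak{m}$ and a neighbourhood $W$ of $e$ in $H$ such that $\Phi|_{V \times W}$ is a diffeomorphism onto an open neighbourhood of $e$ in $G$. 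Using the right $H$-equivariance $\Phi(X, hh') = \Phi(X, h)\, h'$, this upgrades to a local diffeomorphism from $V \times H$ onto its image at every point.

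The main obstacle is showing, after possibly shrinking $V$, that $\Phi : V \times H \to VH$ is globally injective, i.e.\ that no two distinct points of $\exp(V)$ lie in the same left $H$-coset. This is where the closedness of $H$ is indispensable: if injectivity failed for every shrinking of $V$, one could extract sequences $X_n, X_n' \to 0$ in $\mathfrak{m}$ with $X_n \neq X_n'$ and $h_n := \exp(-X_n')\exp(X_n) \in H$ tending to $e$; since $H$ is an embedded submanifold, one can extract a nonzero limiting direction that lies in both $\mathfrak{h}$ and $\mathfrak{m}$, contradicting $\mathfrak{h} \cap \mathfrak{m} = 0$. (An immersed-but-not-closed subgroup, such as an irrational line on a torus, fails precisely this step, which is why the closedness hypothesis cannot be dropped.)

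Once injectivity is established, I would declare $(\pi(\exp(V)), (\pi \circ \exp|_V)^{-1})$ to be a chart on $G/H$ around $eH$, and for general $g \in G$ use $L_g$-translation of this slice to obtain a chart around $gH$. Transition functions between two such charts are compositions of $\exp$, its local inverse, and group operations, hence smooth; this equips $G/H$ with a smooth atlas of dimension $\dim \mathfrak{m} = n - k$, and Hausdorffness plus second countability follow from the closedness of $H$ together with second countability of $G$. Local triviality of $\pi$ is then immediate from the same slice: over $U = \pi(g \exp(V))$, the map $U \times H \to \pi^{-1}(U)$ sending $(\pi(g\exp(X)), h) \mapsto g \exp(X) h$ is a diffeomorphism, and on overlaps the transition cocycle acts on the $H$-fibre by left multiplication by a smooth $H$-valued function, yielding the fibre bundle structure with fibre $H$.
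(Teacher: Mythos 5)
This theorem is quoted in the paper directly from Kirillov's book (Theorem 2.11 there) without an internal proof, so the only meaningful comparison is with the standard argument in that reference. Your proposal is exactly that standard slice proof — exponentiate a complement $\mathfrak{m}$ of $\mathfrak{h}$, use the inverse function theorem plus right $H$-equivariance, invoke closedness of $H$ to get injectivity of the slice, and left-translate to obtain charts and local trivialisations — and it is correct as written.
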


\subsection{Rigidity theory}\label{sec:bas_def_rig}
For a reference on local rigidity, infinitesimal rigidity, and how they relate, we refer the reader to Asimow and Roth's paper \cite{asimow1978rigidity}. For a reference on global rigidity, we refer to the survey by Whiteley and Jord\'an \cite{jordan2017global}. A thorough overview over how projective geometry can be used in rigidity theory is given in the recent paper by Nixon, Schulze and Whiteley \cite{RigProjlens}.

\begin{definition}
    A $d$-dimensional bar-joint framework $(\Gamma,p)$ consists of a graph $\Gamma=(V,E)$ together with a function $p:V\rightarrow \mathbb{R}^d$. Two bar-joint frameworks $(\Gamma,p)$ and $(\Gamma,q)$ are said to be equivalent if for every edge $vw$, one has
    \begin{equation*}
        \Vert p(v) - p(w)\Vert =\Vert q(v) - q(w)\Vert. 
    \end{equation*}
Two bar-joint frameworks $(\Gamma,p)$ and $(\Gamma,q)$ are said to be congruent if there exists a Euclidean transformation $\phi$ such that
    \begin{equation*}
        \phi( p(v))= q(v)
    \end{equation*}
The set of all bar-joint frameworks of a given graph $\Gamma=(V,E)$ can be identified with $\mathbb{R}^{d\vert V\vert}$, by considering $(x_1, \dots, x_{\vert V\vert})$, with each $x_i\in \mathbb{R}^{d}$ to be the bar-joint framework given by $p(v_i) := x_i$.
A flex of a bar-joint framework $p$ is a continuous curve $$\gamma: [0,1] \rightarrow \mathbb{R}^{d\vert V\vert}: t\mapsto (x_{1}(t), \dots x_{\vert V\vert}(t)),$$
such that $\gamma(0)=p$, and for each $t$, $\gamma(t)$ defines a bar-joint framework which is equivalent to $p$. 
\end{definition}

\begin{definition}
    A $d$-dimensional bar-joint framework $(\Gamma,p)$ is 
    \begin{enumerate}
        \item locally rigid if for any flex $\gamma$ of $p$, the frameworks $p$ and $\gamma(t)$ are congruent for all $t\in [0,1]$.

 \item globally rigid if any bar-joint framework $(\Gamma,q)$ that is equivalent to $(\Gamma,p)$, also is congruent to $(\Gamma,p)$, and 
 \item locally/globally flexible if it is not locally/globally rigid.
 \end{enumerate}
\end{definition}

In the literature, there are several other equivalent definitions of local rigidity. Local rigidity is a generic property, in the sense that either almost all frameworks of $\Gamma$ are rigid, or almost all frameworks of $\Gamma$ are flexible. See for instance \cite{asimow1978rigidity}. 
Local rigidity is often replaced by the more tractable notion of infinitesimal rigidity; a linearisation of local rigidity.  

\begin{definition}
Let $(\Gamma, p)$ be a $d$-dimensional bar-joint framework. Label the vertices $V=\{v_1,\dots,v_{\vert V\vert}\}$. Let 
\begin{equation*}
    \begin{array}{rccc}
    f_\Gamma: &\mathbb{R}^{d\vert V\vert } &\rightarrow & \mathbb{R}^{\vert E \vert}\\\\
    &(x_{(v_1,1)}, x_{(v_1,2)}, \dots, x_{(v_{\vert V\vert},d)}) &\mapsto & \left(\sum_{i=1}^{d}(x_{(v,i)}- x_{(w,i)})^{2}\right)_{(v,w)\in E}.
    \end{array}
\end{equation*}
Elements of $\ker(d(f_\Gamma)_p)$ are called infinitesimal motions of $(\Gamma,p)$. Let $k=\dim(\textup{Span}(p(v)_{v\in V}))$. If 
\begin{equation*}
    \dim(\ker(d(f_\Gamma)_p)= \binom{d+1}{2} - \binom{d-k}{2},
\end{equation*}
then we say that $(\Gamma,p)$ is infinitesimally rigid.
\end{definition}

Already in 1864, Maxwell gave a necessary condition for rigidity of a bar-joint framework in two or three dimensions.
In three dimensions Maxwell's rule says that
$m-s= 3j-b-6$
where $b$ is the number of bars, $j$ is the number of joints, $m$ is the number of infinitesimal internal mechanisms and $s$ is the dimension of self-stresses. 
If  $m=s=0$ then the framework is kinematically and statically determinate or isostatic; minimally infinitesimally rigid and maximally stress-free.
Such frameworks have no motion, they are (minimally) rigid. 

A consequence of Maxwell's rule is the following lower bound on the dimension of the infinitesimal motions of the bar-joint framework. 
\begin{equation}
    \dim(\ker(d(f_\Gamma)_p)) \geq d \vert V \vert - \vert E\vert
\end{equation}

From Maxwell's rule, a necessary condition for generic rigidity follows. Namely, if $\Gamma$ is generically minimally (locally) rigid in $d$-dimensional Euclidean space, then
\begin{equation*}
\begin{array}{lcl}
\vert E\vert &=&  d\vert V\vert- \binom{d+1}{2}\medskip\\
\vert F\vert &\leq&  d\vert V(F)\vert- \binom{d+1}{2} \textup{ for every subset of edges $F\subseteq E$}.
\end{array}
\end{equation*}

In one and two dimensions, Maxwell's count  is also sufficient: a two-dimensional framework is minimally rigid in generic position if and only if $|F|\leq 2|V(F)|-3$  for all subsets $F\subseteq E$ of the graph $\Gamma=(V,E)$ and additionally $|E|=2|V|-3$. This was shown in  1927 by Geiringer and in 1970 by Laman \cite{PollaczekGeiringer1927, Laman1970}. 

\section{Realising hypergraphs as graphs-of-groups}\label{sec:Defs_hypergraphs}

\subsection{Graph-of-groups realisations and their motions}
    For any group $G$, let $S(G)$ be the lattice of subgroups of $G$.
    Given a hypergraph $\Gamma = (V,E)$, a realisation of $\Gamma$ as a graph of groups in a group $G$ (or a graph-of-groups realisation, or simply realisation) is a function
    \begin{equation*}
        \begin{array}{rccl}
        \rho:& V\cup E &\rightarrow &S(G)\\ 
        &x&\mapsto &\rho(x).
        \end{array}
    \end{equation*}

    This yields a graph of groups on the incidence graph of $\Gamma$.
    \begin{definition}
    Let $\Gamma=(V,E)$ be a hypergraph. The incidence graph, denoted by $I(\Gamma)$ is the graph with vertex set $V(I(\Gamma))= V\cup E$ and edge set given by the set of incidences 
    \begin{equation*}
       E(I(\Gamma))= \{\{v,e\}\in V\cup E~\vert ~v\in e\}.
    \end{equation*}
    We will denote an incidence $\{v,e\}$ by $v*e$, and the set of incidences by $I$.
    \end{definition}

    By taking the incidence graph of $\Gamma$ and putting the group $\rho(v)$ on the vertex representing the vertex $v\in V$, $\rho(e)$ on the vertex representing the edge $e\in E$, and $\rho(x*y)=\rho(x)\cap \rho(y)$ at the edge representing the incidence $x*y$, we indeed obtain a graph of groups. The monomorphisms 
    \begin{align*}
        i_x:&\rho(x*y) \rightarrow \rho(x)\\
        i_y:&\rho(x*y) \rightarrow \rho(y),
    \end{align*}
    are defined in a natural way by inclusion. We will use the notation $\rho(\Gamma)$ to denote a hypergraph together with a realisation $\rho$ as a graph of groups.

\begin{definition}
    \label{def:motion}    
    A  motion of a realisation $\rho(\Gamma)$ in a group $G$ is an indexed set of group elements $(\sigma_x)_{x\in V\cup E \cup I} \in G^{V\cup E \cup I}$, such that for every $x\in V\cup E$, and $i\in I$ with $i=x*y$ or $i=y*x$ one has that 
    \begin{equation}
    \label{eq:incidencepreserving}
        \sigma_i^{-1}\sigma_x\in \rho(x).
    \end{equation}
    \end{definition}

    A motion $M=(\sigma_x)_{x\in V\cup E \cup I}$ of a realisation $\rho(\Gamma)$ defines a new realisation $\rho^{M}(\Gamma)$, obtained by conjugating the groups $\rho(v)$ and $\rho(e)$ with the corresponding group elements in the motion. 
    In other words, if $\rho$ is a realisation, then  $\rho^{M}$ is defined as
    $$\begin{array}{rccl}
        \rho^{M}:& \Gamma &\rightarrow &S(G)\\
        &v&\mapsto &\rho(v)^{\sigma_v} \text{ for vertices}, \\ 
        &e&\mapsto &\rho(e)^{\sigma_e} \text{ for edges}. \\ 
    \end{array}$$

    Note that for any realisation $\rho(\Gamma)$, and any $g\in G$, the tuple $(g , \dots , g)$ always defines a motion of $\rho(\Gamma)$, since $g_i^{-1} g_x= g^{-1} g=\textup{id}$, which is contained in $\rho(x)$ no matter the choice of $\rho(x)$. We call such motions {\em trivial motions}. 

    \begin{definition}
     Two motions $M_1, M_2$ are equivalent, denoted by $M_1 \sim M_2$, if there exists a motion $M=(\sigma_{v_1}, \dots, \sigma_{i_m})$ with $\sigma(x)\in \rho(x)$ for all $x\in V\cup E \cup I$ such that $M_1 = M_2 \circ M$.
    \end{definition}
    If $M_1$ and $M_2$ are equivalent, then $\rho^{M_1} =\rho^{M_2}$. The converse is not always true and in Proposition \ref{normalizer-lemmaMotions} we give conditions for this to be true. 
    
    \begin{definition}\label{congruent}
    Let $\Gamma=(V,E)$ be a hypergraph. Define two realisations $\rho_1(\Gamma), \rho_2(\Gamma)$ to be congruent if there exists some $\sigma\in G$ such that for every $x\in V\cup E$ one has
    \begin{equation}
    \label{eqcon}
        \rho_1(x)^{\sigma}=\rho_2(x).
    \end{equation}
    In other words, $\rho_1$ and $\rho_2$ are congruent if there exists a trivial motion $M$ with $\rho_1^{M} = \rho_2$
    \end{definition}

    \begin{definition}[Global rigidity]\label{globallyrigidhypergraph}
    A realisation of a hypergraph $\rho(\Gamma)$ is globally rigid if for every motion $M$, $\rho^{M}$ is congruent to $\rho$.
    \end{definition}

\begin{proposition}
    The set of all graph-of-group realisations in $G$, together with motions of $\rho(\Gamma)$ defines a groupoid $\mathcal{M}$. More precisely, $\mathcal{M}$ has object set 
    $$ob(\mathcal{M})= \{\rho:  \Gamma \rightarrow S(G)\}$$
    and sets of morphisms
    $$\mathcal{M}(\rho_1, \rho_2) := \left\{ M\in G^{V\cup E \cup I}~\vert~ M \mbox{ is a motion of }\rho_1(\Gamma)\mbox{ with } \rho_1^{M} = \rho_2\right\}.$$
    The composition of two morphisms (i.e. motions) $M_1=(\sigma_x)_{x\in V\cup E \cup I}:\rho_1\rightarrow \rho_2$ and $M_2=(\tau_x)_{x\in V\cup E \cup I}: \rho_2 \rightarrow \rho_3$ is given by  $$M_2\circ M_1= (\tau_x\sigma_x)_{x\in V\cup E \cup I}: \rho_1 \rightarrow \rho_3.$$ 
\end{proposition}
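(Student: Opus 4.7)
The plan is to verify the four groupoid axioms in turn: that composition is well-defined, that composition is associative, that every object $\rho$ carries an identity morphism, and that every morphism is invertible. Associativity will be immediate from associativity of multiplication in $G$ (the composition is defined componentwise). The identity at $\rho$ will be the trivial motion $\mathrm{id}_\rho = (e)_{x \in V\cup E \cup I}$, which is tautologically a motion because $e^{-1}e = e$ lies in every subgroup, and which clearly satisfies $\rho^{\mathrm{id}_\rho} = \rho$ and acts as the identity under componentwise multiplication. Inverses will be obtained by the obvious candidate, namely $M^{-1} = (\sigma_x^{-1})_{x}$.

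The core calculation is to show that $M_2\circ M_1 = (\tau_x\sigma_x)_x$ is indeed a morphism $\rho_1 \to \rho_3$. This has two parts. First, $\rho_1^{M_2\circ M_1} = \rho_3$ is a single-line computation: for each $x \in V \cup E$,
\begin{equation*}
\rho_1(x)^{\tau_x\sigma_x} \;=\; \tau_x\bigl(\sigma_x\rho_1(x)\sigma_x^{-1}\bigr)\tau_x^{-1} \;=\; \rho_2(x)^{\tau_x} \;=\; \rho_3(x).
\end{equation*}
Second, the compatibility condition (\ref{eq:incidencepreserving}) for $M_2\circ M_1$ at an incidence $i = x*y$ demands that
\begin{equation*}
(\tau_i\sigma_i)^{-1}(\tau_x\sigma_x) \;=\; \sigma_i^{-1}(\tau_i^{-1}\tau_x)\sigma_x
\end{equation*}
lie in $\rho_1(x)$. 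The key manoeuvre is to note that since $M_2$ is a motion of $\rho_2$, we have $\tau_i^{-1}\tau_x \in \rho_2(x) = \sigma_x\rho_1(x)\sigma_x^{-1}$, so $\tau_i^{-1}\tau_x = \sigma_x h \sigma_x^{-1}$ for some $h \in \rho_1(x)$. Substituting collapses the expression to $(\sigma_i^{-1}\sigma_x)h$, a product of two elements of $\rho_1(x)$.

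For invertibility, given $M = (\sigma_x)\colon \rho_1 \to \rho_2$, I would check that $M^{-1} = (\sigma_x^{-1})$ satisfies condition (\ref{eq:incidencepreserving}) with respect to $\rho_2$ by the same conjugation trick: $\sigma_i\sigma_x^{-1} = \sigma_x(\sigma_x^{-1}\sigma_i)\sigma_x^{-1}$, where $\sigma_x^{-1}\sigma_i = (\sigma_i^{-1}\sigma_x)^{-1} \in \rho_1(x)$, so its conjugate by $\sigma_x$ lands in $\sigma_x\rho_1(x)\sigma_x^{-1} = \rho_2(x)$. A parallel one-line check gives $\rho_2^{M^{-1}} = \rho_1$, and the identities $M\circ M^{-1} = (\sigma_x\sigma_x^{-1}) = \mathrm{id}_{\rho_2}$ and $M^{-1}\circ M = \mathrm{id}_{\rho_1}$ are tautological.

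The main obstacle here is more bookkeeping than conceptual: one must be careful to remember which realisation each motion is a motion \emph{of}. The relation encoded by $M_2$ is indexed by subgroups of $\rho_2$, not $\rho_1$, so the only real step in the whole argument is to re-express $\rho_2(x)$ as the $\sigma_x$-conjugate of $\rho_1(x)$ in order to couple it with the relation coming from $M_1$. Exactly the same substitution underlies both the composition and the inverse checks, so once that trick is in place, every verification reduces to a short manipulation inside $G$.
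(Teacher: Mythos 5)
Your proposal is correct and follows essentially the same route as the paper's proof: the central step in both is to rewrite $\tau_i^{-1}\tau_x\in\rho_2(x)$ as $\sigma_x h\sigma_x^{-1}$ with $h\in\rho_1(x)$ so that $(\tau_i\sigma_i)^{-1}\tau_x\sigma_x$ collapses to a product of two elements of $\rho_1(x)$, and the remaining axioms are verified componentwise exactly as in the paper. The only difference is that you spell out the verification that $(\sigma_x^{-1})_x$ is a motion of $\rho_2$, which the paper leaves as an easy check.
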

\begin{proof}
It is clear that for every object there is an identity element, namely $(\textup{id}, \dots, \textup{id})$. Every motion $M:\rho_1 \rightarrow \rho_2$ given by $(\sigma_{x})_{x\in V\cup E \cup I}$ has an inverse $M^{-1}: \rho_2 \rightarrow \rho_1$ given by $(\sigma^{-1}_{x})_{x\in V\cup E \cup I}$, and composition is clearly associative. It is easy to check that the inverse is a motion. 
We will now verify that for any two motions $(\sigma_x)_{x\in V\cup E \cup I} = M_1:\rho_1 \rightarrow \rho_2$, and $ (\tau_{x})_{x\in V\cup E \cup I} = M_2: \rho_{2} \rightarrow \rho_3$, the composition $M_2\circ M_1$
is a motion with $\rho_1^{M_2\circ M_1} =\rho_3$. Indeed, for every incidence $i= x*e$ one has
\begin{align*}
(\tau_i \sigma_i)^{-1} \tau_x \sigma_x &= \sigma_i^{-1}\tau_i^{-1} \tau_x \sigma_x\\
&=\sigma_i^{-1} g' \sigma_x\mbox{ for some } g'\in \rho_2(x)\\
&=\sigma_i^{-1} \sigma_x g_0\sigma_x^{-1} \sigma_x\mbox{   for some } g_0\in \rho_1(x)\\
&= \sigma_i^{-1} \sigma_x g_0\mbox{   for some } g_0\in \rho_1(x),
\end{align*}
and since $\sigma_i^{-1}\sigma_x\in \rho_1(x)$ we get $(\tau_i \sigma_i)^{-1} \tau_x \sigma_x \in \rho_1(x)$, so the composition is a well defined motion of $\rho_1$. We have $\rho_1^{M_2\circ M_1}=\rho_3$, since 
$$    \rho_1(x)^{\tau_x\sigma_x}=(\rho_1(x)^{\sigma_x})^{\tau_x}= \rho_2(x)^{\tau_x} = \rho_3(x).
$$
\end{proof}\\

From any group homomorphism one obtains a new graph of groups. In particular, when one has a group isomorphism this yields an isomorphism between the sets of motions of these two structures.  In Section \ref{sec:examples} we will apply this to show that the liftings of scenes are dual to parallel redrawings.

\begin{proposition}\label{GroupHom}
    Let $f: G\rightarrow H$ be a group homomorphism and let $\rho_G$ be a realisation of a hypergraph $\Gamma$ as a graph of groups in $G$. 
    Then $\rho_G$ induces a realisation $\rho_{H}$ of $\Gamma$ as a graph of groups in $H$, and this gives rise to a morphism of groupoids
    \begin{equation*} 
    f_*: \mathcal{M}_G \rightarrow \mathcal{M}_H\end{equation*}
where $\mathcal{M}_G$ is the groupoid of motions of $\Gamma$ in $G$, and $\mathcal{M}_H$ is the groupoid of motions of $\Gamma$ in $H$. On objects it is defined by $$f_*(\rho)(x)= f(\rho(x))$$ and for any morphism $s=\{\sigma_x\}_{x\in V\cup E \cup I}$ from $\rho_1$ to $\rho_2$, we define  $$f_*(s) :=\{f(\sigma_x)\}_{x\in V\cup E \cup I}.$$  
Moreover this is functorial in the sense that $f_* \circ g_* =(f\circ g)_*$ and $\textup{id}_* = \textup{id}$. This implies that if $f$ is an isomorphism, then so is $f_{*}$.
\end{proposition}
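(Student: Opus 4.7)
The plan is to verify the statement in three stages: first that $f_{*}$ is well-defined on objects, then that it is well-defined on morphisms (sending motions to motions with the correct source and target), and finally that it is a functor and behaves correctly under composition of homomorphisms. Since $f\colon G\to H$ is a group homomorphism, the image $f(K)$ of any subgroup $K\leq G$ is a subgroup of $H$, so the assignment $f_{*}(\rho)\colon x\mapsto f(\rho(x))$ defines a realisation of $\Gamma$ as a graph of groups in $H$. This is the trivial half.

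Next I would check that $f_{*}$ sends a motion $s=(\sigma_{x})_{x\in V\cup E\cup I}$ of $\rho_{1}$ to a motion $(f(\sigma_{x}))_{x\in V\cup E\cup I}$ of $f_{*}(\rho_{1})$ with the correct target $f_{*}(\rho_{2})$. For the defining condition of a motion (equation \eqref{eq:incidencepreserving}), I would use that for any incidence $i=x*y$,
\begin{equation*}
    f(\sigma_{i})^{-1} f(\sigma_{x}) = f(\sigma_{i}^{-1}\sigma_{x}) \in f(\rho_{1}(x)) = f_{*}(\rho_{1})(x),
\end{equation*}
which follows immediately from $\sigma_{i}^{-1}\sigma_{x}\in \rho_{1}(x)$ and the fact that $f$ is a homomorphism. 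For the target condition, the key observation is that homomorphisms commute with conjugation, so
\begin{equation*}
    f_{*}(\rho_{1})(x)^{f(\sigma_{x})} = f(\sigma_{x})\, f(\rho_{1}(x))\, f(\sigma_{x})^{-1} = f\bigl(\rho_{1}(x)^{\sigma_{x}}\bigr) = f(\rho_{2}(x)) = f_{*}(\rho_{2})(x).
\end{equation*}

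The functoriality of $f_{*}$ on the groupoid structure is then a direct check: identity motions are sent to identity motions because $f(\mathrm{id}_{G})=\mathrm{id}_{H}$, and composition is preserved because $f(\tau_{x}\sigma_{x})=f(\tau_{x})f(\sigma_{x})$, which is exactly the composition rule given in the preceding proposition. For the last two claims, I would observe that on both objects and morphisms, $(f\circ g)_{*}$ and $f_{*}\circ g_{*}$ are defined by applying $f\circ g$ componentwise, and $\mathrm{id}_{*}$ likewise acts componentwise by the identity; hence equality is pointwise. In particular, if $f$ is an isomorphism with inverse $f^{-1}$, then $f_{*}\circ (f^{-1})_{*}=(f\circ f^{-1})_{*}=\mathrm{id}_{*}=\mathrm{id}$ and similarly in the other order, so $f_{*}$ is an isomorphism of groupoids.

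There is no substantial obstacle here: the proposition is essentially an unpacking of the definitions, and the only ``nontrivial'' content is that the three properties of a group homomorphism — sending subgroups to subgroups, commuting with conjugation, and preserving products — are exactly the three properties that match the three ingredients (object assignment, target condition, composition) of the groupoid $\mathcal{M}$. I would structure the written proof as a sequence of short verifications in the order above, without separating them into many cases.
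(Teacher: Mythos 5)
Your proposal is correct and follows essentially the same route as the paper's proof: verify well-definedness on objects, check the motion condition via $f(\sigma_i)^{-1}f(\sigma_x)=f(\sigma_i^{-1}\sigma_x)\in f(\rho_1(x))$, check the target condition using that homomorphisms commute with conjugation, and observe that functoriality is a componentwise consequence of the definitions. The only difference is that you spell out the functoriality and the isomorphism claim in slightly more detail than the paper, which simply states that these follow from the definitions.
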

\begin{proof}
It is clear that the morphism $f_*: \mathcal{M}\rightarrow \mathcal{M}'$ is well defined on objects. Let us check that given a motion $\{\sigma_x\}_{x\in V\cup E\cup I}$ from $\rho_1$ to $\rho_2$,  $\{f(\sigma_x)\}_{x\in V\cup E\cup I}$ defines a motion, since
\begin{equation*}
    f(\sigma_i)^{-1}f(\sigma_x)=f(\sigma_i^{-1}\sigma_x)\in f(\rho(x)).
\end{equation*}
This is a motion from $f_*(\rho_1)$  to $f_*(\rho_2)$, since
        \begin{equation*}
    f_*(\rho_1)(x)^{f(\sigma_x)} =  f(\rho_1(x))^{f(\sigma_x)}=f(\rho_1(x)^{\sigma_x})=f_*(\rho_2(x)).\end{equation*}
Finally, the functorality follows almost immediately from the definitions.
\end{proof}\\

In the examples in Section \ref{sec:examples}, we will see that the role of the vertices and the edges may be swapped under the isomorphism. In our examples, this happens because the group isomorphism comes from a duality of projective space. Since the definitions of a motion and of a graph of groups realisation are symmetric in the roles of vertices and edges, the vertices can be relabelled as edges and vice versa. To make this formal, one needs the concept of a dual hypergraph. 

Let $\Gamma=(V,E)$ be a hypergraph. For any $v\in V$, let $E_{v}:=\{ e\in E~ \vert ~ v\in e \}$. We define the dual hypergraph $\Gamma^*=(V^{*},E^*)$, where 
\begin{align*}
    V^{*}(\Gamma) &= E,\\
    E^{*}(\Gamma) &= \{ E_{v} \subset V^{*}~\vert ~ v\in V\}.
\end{align*}
Here $E^{*}(\Gamma)$ is considered to be a multi-set, where we have distinct copies of $E_{v}, E_{w}$ if $v\neq w$ even if $E_v=E_w$. It is easy to prove the following lemma.
\begin{lemma}\label{Dualhypergraph}
Let $\Gamma=(V,E)$ be a hypergraph and let $G$ be a group.  The groupoid $\mathcal{M}_\Gamma$ with objects 
\begin{align*}
    ob(\mathcal{M}_\Gamma)=\{\rho: \Gamma \rightarrow S(G)  \}
\end{align*}
and morphisms
\begin{align*}
    \mathcal{M}_{\Gamma}(\rho_1, \rho_2)= \left\{ M=(\sigma_x)_{x\in V\cup E \cup I} ~\vert~ M \mbox{ is a motion of }\rho_1(\Gamma)\mbox{ with } \rho_1^{M} = \rho_2\right\}.
\end{align*}
is isomorphic to the groupoid  $\mathcal{M}_{\Gamma^{*}}$, with objects
\begin{align*}
    ob(\mathcal{M}_{\Gamma^{*}})=\{\rho: \Gamma^{*} \rightarrow S(G)  \}
\end{align*}
and morphisms
\begin{align*}
    \mathcal{M}_{\Gamma^*}(\rho_1, \rho_2)= \left\{ M:=(\sigma_x)_{x\in V^{*}\cup E^{*} \cup I} ~\vert~ M \mbox{ is a motion of }\rho_1(\Gamma^{*})\mbox{ with } \rho_1^{M} = \rho_2\right\}.
\end{align*}
\end{lemma}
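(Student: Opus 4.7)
The plan is to exhibit a canonical bijection $\Phi$ between the indexing sets $V \cup E \cup I$ of $\Gamma$ and $V^* \cup E^* \cup I^*$ of $\Gamma^*$ that swaps the roles of vertices and edges while preserving the incidence relation, and then to transport both the subgroup assignments (objects) and the indexed tuples of group elements (morphisms) across this bijection. Because the defining condition of a motion, $\sigma_i^{-1}\sigma_x \in \rho(x)$, is stated symmetrically in the roles of the two endpoints of an incidence, essentially no work is required beyond setting up the correct re-indexing.

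First I would define $\Phi$ on labels: send $v \in V$ to $E_v \in E^*$, send $e \in E$ to $e \in V^* = E$, and send an incidence $v*e \in I$ to the incidence $e * E_v \in I^*$. This is well-defined because $v \in e$ in $\Gamma$ if and only if $e \in E_v$ in $\Gamma^*$, and the multi-set convention on $E^*$ ensures $\Phi$ is a bijection even when two distinct vertices $v,w$ of $\Gamma$ satisfy $E_v = E_w$ as sets. Next I would define the functor $F : \mathcal{M}_\Gamma \to \mathcal{M}_{\Gamma^*}$ on objects by $F(\rho) = \rho \circ \Phi^{-1}$, so that the subgroup attached to $E_v$ in $\Gamma^*$ is $\rho(v)$ and the subgroup attached to $e$ is $\rho(e)$; the intersections attached to incidences match automatically because $\Phi$ commutes with the relation ``$v \in e$''. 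On morphisms I would set $F\!\left((\sigma_x)_{x \in V \cup E \cup I}\right) = (\sigma_{\Phi^{-1}(y)})_{y \in V^* \cup E^* \cup I^*}$, i.e.\ the same group elements, just re-indexed through $\Phi$.

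The verification that $F(M)$ is again a motion reduces to observing that, since Definition \ref{def:motion} requires $\sigma_i^{-1}\sigma_x \in \rho(x)$ \emph{for every $x \in V \cup E$ incident to $i$}, applying this in $\Gamma$ to the pairs $(v, v*e)$ and $(e, v*e)$ yields exactly the two conditions required of the pairs $(E_v, e*E_v)$ and $(e, e*E_v)$ in $\Gamma^*$. The fact that $F$ sends a morphism $\rho_1 \to \rho_2$ to a morphism $F(\rho_1) \to F(\rho_2)$ follows because conjugation is applied coordinatewise, and $\Phi$ matches up the coordinates. Identity and composition are preserved by construction, since the identity motion is the constant tuple $(\textup{id})$ and composition is coordinatewise multiplication, both of which commute with any re-indexing. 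Applying the same construction to $\Gamma^*$ and using the canonical identification $(\Gamma^*)^* \cong \Gamma$ produces a two-sided inverse, so $F$ is an isomorphism of groupoids.

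The main (and mild) obstacle is the bookkeeping around the multi-set $E^*$: one has to be careful that under the identification $(\Gamma^*)^* \cong \Gamma$ the composite $\Phi^* \circ \Phi$ is literally the identity on labels, rather than identifying duplicated copies of hyperedges. Once this is pinned down, every remaining step is a direct unwinding of definitions, since the motion condition and the action $\rho(x) \mapsto \rho(x)^{\sigma_x}$ depend only on the incidence relation and the coordinate labels, both of which $\Phi$ preserves.
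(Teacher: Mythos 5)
Your proof is correct and is exactly the routine re-indexing argument the paper has in mind when it states the lemma without proof ("It is easy to prove the following lemma"): the motion condition $\sigma_i^{-1}\sigma_x\in\rho(x)$ is symmetric in the two endpoints of an incidence, so transporting tuples along the label bijection $v\mapsto E_v$, $e\mapsto e$, $v*e\mapsto e*E_v$ gives the isomorphism. Your attention to the multi-set convention on $E^*$ (which is what makes $\Phi$ a genuine bijection and $(\Gamma^*)^*\cong\Gamma$ canonical) is the one point worth spelling out, and you handle it properly.
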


In most of the examples in Section \ref{sec:examples}, the subgroups $\rho(x)$ will be taken to be the stabiliser of a geometric object under a transitive group action. When the subgroup $\rho(x)$ for every $x\in V\cup E$ is self-normalising, one can recover this geometric object associated to $x$, since in this case $\textup{Fix}(\rho(x))=x$. This is made precise by the following lemma.

\begin{lemma}\label{normalizer-lemma}
Suppose we are given a group $G$ which acts transitively on a set $X$. The stabilisers are self-normalising (i.e. $N_G(\Stab(y))= \Stab(y)$), if and only if the stabilisers correspond uniquely to elements of $X$, in the sense that
\begin{equation*}
 \textup{Fix}(\Stab(y)) = \{x\in X~\vert~ h x =x \text{ for all } h\in \Stab(y)\}= \{y\}.
\end{equation*}
\end{lemma}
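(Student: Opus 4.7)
The plan is to prove the two implications separately. Each direction reduces to a short group-theoretic manipulation that exchanges between the two conditions via the fact that, by transitivity, every stabiliser is conjugate to $\Stab(y)$.

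For the direction $(\Leftarrow)$, assume $\textup{Fix}(\Stab(y)) = \{y\}$. Taking an arbitrary $g \in N_G(\Stab(y))$, I would argue that $g \cdot y = y$. The point is that for every $h \in \Stab(y)$ the normaliser condition gives $g^{-1}hg \in \Stab(y)$, so
\[
h \cdot (g \cdot y) \;=\; g \cdot (g^{-1}hg) \cdot y \;=\; g \cdot y.
\]
Hence $g \cdot y$ is fixed pointwise by $\Stab(y)$, so $g \cdot y \in \textup{Fix}(\Stab(y)) = \{y\}$, forcing $g \in \Stab(y)$. Combined with the trivial containment $\Stab(y) \leq N_G(\Stab(y))$, this gives the desired equality.

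For $(\Rightarrow)$, assume $N_G(\Stab(y)) = \Stab(y)$. Clearly $y \in \textup{Fix}(\Stab(y))$, so only the reverse inclusion is non-trivial. Given $x \in \textup{Fix}(\Stab(y))$, transitivity lets me write $x = g \cdot y$ for some $g \in G$, so that $\Stab(x) = g\Stab(y)g^{-1}$. The fixed-point condition then becomes $\Stab(y) \leq g\Stab(y)g^{-1}$, and the plan is to upgrade this containment to an equality, from which $g \in N_G(\Stab(y)) = \Stab(y)$ and hence $x = y$.

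The step I expect to need the most care is precisely this final upgrade: the containment $\Stab(y) \leq g\Stab(y)g^{-1}$ between two conjugate subgroups is not automatically an equality in full generality. It does follow immediately whenever $\Stab(y)$ is finite (by cardinality), and more generally whenever $\Stab(y)$ is co-Hopfian, which covers the geometric applications intended later in the paper; in those settings the forward direction goes through cleanly. The reverse direction, by contrast, is purely formal and requires no such input.
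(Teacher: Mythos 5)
Your proposal follows essentially the same route as the paper's own proof: both directions are mediated by the dictionary that, writing $x=g\cdot y$, the condition $x\in\textup{Fix}(\Stab(y))$ translates into the containment $\Stab(y)\leq g\Stab(y)g^{-1}$, while membership of $x$ in $N_G(\Stab(y))\cdot y$ translates into the corresponding equality. Your $(\Leftarrow)$ direction is complete and is the same argument as the paper's.

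The step you flag in the $(\Rightarrow)$ direction is not a defect of your write-up relative to the paper --- it is precisely the point at which the paper's own proof is incomplete. There, after choosing $g$ with $g\cdot x=y$ and deriving $ghg^{-1}\in\Stab(y)$ for all $h\in\Stab(y)$, the paper writes ``thus $g\in N_G(\Stab(y))$''; that is exactly the upgrade from a one-sided containment between $\Stab(y)$ and a conjugate of it to an equality, which you correctly identify as non-automatic. A subgroup can be properly contained in a conjugate of itself (for instance $\langle a\rangle\subsetneq t^{-1}\langle a\rangle t$ in $\langle a,t\mid tat^{-1}=a^2\rangle$), so this implication cannot be obtained by formal manipulation alone; it does hold when the stabilisers are finite or co-Hopfian, as you note, and also when they are closed Lie subgroups with finitely many connected components (equal dimension forces equal identity components, and then equal, finite, component counts force equality), which covers the situations in which the paper later invokes the lemma. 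So where your proof is complete it coincides with the paper's, and where you stop short the paper's argument is also not a proof: the $(\Rightarrow)$ direction genuinely needs an additional hypothesis of the kind you describe, or else the conclusion should be weakened to the identity $\textup{Fix}(\Stab(y))=\{g\cdot y~\vert~ \Stab(y)\leq g\Stab(y)g^{-1}\}$, which is what the manipulations actually establish.
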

\begin{proof}
We will prove this by showing $\textup{Fix}(\Stab(y)) = N_G(\Stab(y))\cdot y$.

First, let $x\in \textup{Fix}(\Stab(y))$. By transitivity, there exists a $g\in G$ such that $ g \cdot x= y$. Since $x\in \textup{Fix}(\Stab(y))$, for any $h\in \Stab(y)$, we have $h \cdot x = x$, we thus see that $gh g^{-1} \cdot y= y$, thus $ghg^{-1}\in \Stab(y)$, and thus  $g\in N_G(\Stab(y))$, which implies that  $x\in N_G(\Stab(y)) \cdot y$. Conversely, let $x\in N_G(\Stab(y)) \cdot y$, then $x=g \cdot y$ for some $g$, with $ghg^{-1}\in \Stab(y)$. Take $h\in \Stab(y)$. Then $h \cdot x = h g\cdot y= g h'\cdot y $ for some $h' \in \Stab(y)$, so $h \cdot x = g \cdot y = x$, and hence $x\in \textup{Fix}(\Stab(y))$.

Let us now prove that if $\textup{Fix}(\Stab(y))=\{y\}$, then $\Stab(y)$ is self-normalising. We note that $N_G(\Stab(y)) \cdot \textup{Fix}(\Stab(y)) = \textup{Fix}(\Stab(y))$, by the above. Thus if 
\begin{equation*}
   \textup{Fix}(\Stab(y))= \{y\},
\end{equation*}
We see that $N_G(\Stab(y))\subseteq \Stab(y)$, and since the other inclusion is always true, we see that these sets are equal. 

We now show the converse. Suppose $N_G(\Stab(y))=\Stab(y)$. Pick any $z\in \textup{Fix}(\Stab(y))$, then $z\in  N_G(\Stab(y)) \cdot y =\{y\}$, and thus $z=y$.
\end{proof}\\

We now give a condition for when $\rho^{M_1} = \rho^{M_2}$ implies that $M_1\sim M_2$. If $\rho^{M_1} = \rho^{M_2}$, then $\rho^{M_1^{-1}\circ M_2} = \rho$, and thus it suffices to compute the isotropy subgroup 
\begin{equation*}
    G_{\rho}=\{M: \rho \rightarrow \rho~ \vert ~ M \text{ is a motion }\}.
\end{equation*}
Indeed, if $G_\rho = \prod_{x\in V\cup E \cup I} \rho(x)$, then $M_1^{-1}\circ M_2\in \prod_{x\in V\cup E \cup I} \rho(x)$, and hence $M_1\sim M_2$.

\begin{proposition}\label{normalizer-lemmaMotions}
    Let $\rho$ be a graph-of-groups realisation in a group $G$, and let $G_{\rho}$ be the isotropy subgroup of $\rho$. Then $G_{\rho}= \prod_{x\in V\cup E \cup I} \rho(x)$ if one of the following conditions holds.
    \begin{enumerate}
        \item $\rho(x)$ are self-normalising for all $v\in V$, and $\rho(e)= \cap_{v\in e} \rho(v) \cap G$ for any incidence.
        \item $\rho(e)$ are self-normalising for all $e\in E$, and $\rho(v)= \cap_{e:v\in e} \rho(e) \cap G$.
        \item  $\rho(x)$ are self-normalising for all $x\in V\cup E$.
    \end{enumerate}
\end{proposition}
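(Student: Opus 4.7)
The forward inclusion $\prod_{x\in V\cup E\cup I}\rho(x) \subseteq G_\rho$ requires no hypotheses: if $\sigma_x \in \rho(x)$ for every $x$, then $\rho(x)^{\sigma_x}=\rho(x)$ since any subgroup is stable under conjugation by its own elements, and for any incidence $i=v*e$ and endpoint $x\in\{v,e\}$, the element $\sigma_i^{-1}\sigma_x$ lies in $\rho(x)$ because $\sigma_i \in \rho(i)\subseteq \rho(x)$ and $\sigma_x \in \rho(x)$. The task is therefore to prove the reverse inclusion, so I fix $M=(\sigma_x)_{x\in V\cup E\cup I} \in G_\rho$, which by definition means (A) $\rho(x)^{\sigma_x}=\rho(x)$ for every $x\in V\cup E$, and (B) $\sigma_i^{-1}\sigma_x \in \rho(x)$ for every incidence $i=v*e$ and each endpoint $x\in\{v,e\}$.

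Case 3 is then immediate: by (A) and self-normalisation, $\sigma_x \in N_G(\rho(x))=\rho(x)$ for every $x\in V\cup E$; (B) then gives $\sigma_i \in \sigma_v\rho(v)=\rho(v)$ and $\sigma_i \in \sigma_e\rho(e)=\rho(e)$ for any incidence $i=v*e$, so $\sigma_i \in \rho(v)\cap\rho(e)=\rho(i)$. Case 2 will follow from Case 1 by applying Lemma \ref{Dualhypergraph}, which provides a groupoid isomorphism between the realisations of $\Gamma$ and those of the dual hypergraph $\Gamma^*$; under this correspondence the hypotheses of Case 2 transfer to precisely those of Case 1 on $\Gamma^*$. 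The bulk of the proof is therefore Case 1.

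In Case 1, applying (A) and self-normalisation to the vertex groups yields $\sigma_v \in \rho(v)$ for all $v\in V$, and the same step as in Case 3 then gives $\sigma_i \in \rho(v)$ for every incidence $i=v*e$. The main obstacle is extracting $\sigma_e \in \rho(e)$ without a self-normalising hypothesis on edge groups. The plan is to bootstrap via the intersection description $\rho(e)=\bigcap_{w\in e}\rho(w)$: for each $w\in e$, the incidence $i_w=w*e$ satisfies $\sigma_{i_w}\in \rho(w)$ by the preceding observation, and (B) applied to $i_w$ with endpoint $e$ gives $\sigma_e \in \sigma_{i_w}\rho(e)\subseteq \rho(w)\rho(e)\subseteq \rho(w)$, where the last inclusion uses $\rho(e)\subseteq \rho(w)$. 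Intersecting over all $w\in e$ yields $\sigma_e \in \bigcap_{w\in e}\rho(w)=\rho(e)$. Then $\sigma_i \in \sigma_e\rho(e)=\rho(e)$ combines with $\sigma_i \in \rho(v)$ to give $\sigma_i \in \rho(v)\cap\rho(e)=\rho(i)$, completing Case 1 and hence the proposition.
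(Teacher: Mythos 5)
Your proof is correct and follows essentially the same route as the paper's: reduce Case 2 to Case 1 via Lemma \ref{Dualhypergraph}, dispatch Case 3 directly from self-normalisation, and in Case 1 bootstrap $\sigma_e\in\bigcap_{w\in e}\rho(w)=\rho(e)$ from the already-established containments at the vertices and incidences. The only cosmetic difference is the order of the intermediate steps (you pass through $\sigma_i\in\rho(v)$ first, the paper goes via $g_e^{-1}g_v\in\rho(e)\rho(v)$), and you make the unconditional forward inclusion and the isolated-edge convention slightly more explicit.
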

\begin{proof}
    We will prove the first point. The second point then follows by Lemma \ref{Dualhypergraph}, and the third point is almost immediate by the definitions.
    
    Suppose that $M= (g_{v_1}, \dots g_{i_m})$ is a motion of $\rho$, with $\rho^{M}=\rho$. Then $\rho(v)^{g_v} = \rho(v)$ for vertices $v$ implies that $g_v\in \rho_v$. Let $e \in E$, and take an arbitrary incidence $i=v*e$. If there is no such $i=v*e$, then $\rho(e) = G$, and we have $g_e\in \rho(e)$. Otherwise $\rho(i) = \rho(e)$ for any incidence, and hence we see that $g_e = g_i h_e$, where $h_e\in \rho(e)$. Thus, we see that $g_e^{-1}g_v= h_e^{-1} g_{i}^{-1} g_v\in \rho(e)\rho(v)\subseteq \rho(v)$, and hence we have $g_e\in \rho(v)$. Since $v$ was arbitrary, we see that $g_e\in \cap_{v\in e}\rho(v)$, thus $g_e\in \rho(e)$. It then follows that $g_e\in \rho(e)$. For any incidence $i=v*e$, since $g_i^{-1}g_e\in \rho(e)$, it holds that $g_{i}\in \rho(e) = \rho(i)$. 
\end{proof}

\subsection{Local and infinitesimal rigidity}\label{def Local_Inf}

We now give definitions of local and infinitesimal rigidity when the group is a Lie group. In Section \ref{sec: bar-joints}, we will show that these definition correspond to the usual definitions for bar-joint frameworks, when the group is $E(d)$. We will consider the following class of graph-of-groups realisations.
\begin{definition}
If $\rho(\Gamma)$ is a graph of groups realisation in a Lie group $G$ such that $\rho(x)$ are closed Lie subgroups for all $x\in V\cup E\cup I$, we say that $\rho(\Gamma)$ is a Lie graph-of-groups realisation in $G$. 
\end{definition}

\begin{definition}[Continuous motion]
Let $\rho(\Gamma)$ be a Lie graph-of-groups realisation in a group $G$.
A continuous motion of a graph of groups $\rho$ is a continuous curve $\gamma : [0,1] \rightarrow G^{V\cup E \cup I }$ such that $\gamma(0) = (\textup{id}, \dots,\textup{id})$ and for each $t\in [0,1]$, $\gamma(t)$ is a motion of $\rho$.
We say that two continuous motions $\gamma_1, \gamma_2$ of $\rho$ are equivalent if $\gamma_1(t)$ and $\gamma_2(t)$ are equivalent for all $t\in [0, 1]$.
\end{definition}

 Any continuous motion $\gamma$ must start in the identity, and therefore the curve $\gamma$ must lie in the identity component of the group. In this way, when working with continuous motions, one might as well restrict to the identity component of the group. 
 We now define local rigidity. The idea is that we say a graph-of-groups realisation is locally rigid if any continuous motion is equivalent to a trivial motion. 

\begin{definition}[Local rigidity]
Let $\rho(\Gamma)$ be a Lie graph-of-groups realisation in a group $G$. We say $\rho(\Gamma)$ is locally rigid if for any continuous motion $\gamma:[0, 1]\rightarrow G^{V\cup E \cup I}$, and for any $t\in [0,1]$, $\gamma(t)$ is equivalent to a congruence (i.e. a motion of the form $(g, g, \dots, g)$), for all $t\in [0,1]$.
\end{definition}

We now wish to define infinitesimal rigidity for Lie graph-of-groups realisations. Every Lie subgroup $\rho(x)$ has an associated Lie algebra $\mathfrak{h}_x\leq \mathfrak{g}$, and a Lie algebra is a vector space. To motivate the definition of an infinitesimal motion, we assume that we are given a continuous motion $\gamma$ which is also smooth. Since $\gamma(0)=(\text{id}, \dots, \text{id})$, we see that at $t=0$, the tangent vectors defined by $\gamma$ lie in $\mathfrak{g}$. Moreover, by the condition for a motion, for each vertex $x$ or edge $x$, and incidence $i$ in which $x$ participates, the tangent vector of the curve $t\mapsto \gamma_i(t)^{-1} \gamma_x(t)$ at $t=0$ satisfies
\begin{equation*}
    \frac{d(\gamma_i(t)^{-1}\gamma_x(t))}{dt}\vert_{t=0}\in \mathfrak{h}_x.
\end{equation*}
 We let $A$ be the vector space $$(v_{x_1}, \dots,v_{x_n},v_{e_1}, \dots,v_{e_m} ,v_{i_1},\dots   v_{i_k})\in \mathfrak{g}^{V\cup E \cup I},$$
such that for all $(x,i)\in (V\cup E)\times I$ with  $i=x*y$ or $i=y*x$ one has 
\begin{equation}\label{eq_infinitesimal_motion}
    d\phi_{\textup{id}}(v_i,v_x) = -v_i + v_x\in \mathfrak{h}_x,
\end{equation}
where $$\phi: G\times G\rightarrow G:(g,h)\mapsto g^{-1}h.$$ In other words, 
\begin{equation*}
\begin{array}{clll}
    A&= \underset{{i=(x,y)\in I}}{\bigcap} \left( (d(\phi\circ pr_{i,x})_{\textup{id}}^{-1}(\mathfrak{h}_x) \cap  (d(\phi\circ pr_{i,y})_{\textup{id}}^{-1}(\mathfrak{h}_y) \right),&
    \end{array}
\end{equation*}
where $pr_{i,x}:G^{V\cup E \cup I}\rightarrow G^{2}$ is the projection onto the $i$-th and $x$-th coordinate. 
If $\gamma$ and $\gamma'$ are equivalent, then for all $x\in V\cup E\cup I$ one has $\gamma_x(t)^{-1} \gamma'_x(t)\in \rho(x)$. Hence, given two $(v_{x})_{x\in V\cup E\cup I} ,(w_x)_{x\in V\cup E\cup I}\in A$, coming from two equivalent curves, one has $v_x - w_x= 0$ mod $\mathfrak{h}_x$ for all $x\in V\cup E\cup I$. For this reason, we will bound the dimension of $\pi(A)$, where $\pi$ is the map
\begin{equation*}
\begin{array}{rccl}
    \pi: &\mathfrak{g} \times \cdots \times\mathfrak{g} & \rightarrow &\mathfrak{g} /\mathfrak{h}_1 \times \cdots \times\mathfrak{g} /\mathfrak{h}_{\vert V \cup E\cup I\vert}\medskip\\
    &(v_1, \dots, v_{\vert V \cup E\cup I\vert})&\mapsto &(v_1+\mathfrak{h}_1, \dots, v_{\vert V \cup E\cup I\vert }+ \mathfrak{h}_{\vert V \cup E\cup I\vert}),
    \end{array}
\end{equation*}

Now, let
$$i: \mathfrak{g} \hookrightarrow A:v \mapsto (v, v,\dots, v).$$
This is easily seen to be a well-defined injection. Call the elements of $\pi(i(\mathfrak{g}))$  {\em trivial infinitesimal motions}. 

\begin{remark}\label{trivialmotions}
It is easy to check that
$$\ker(\pi) \cap i(\mathfrak{g})=\bigcap_{x\in V\cup E \cup I} \mathfrak{h}_x.$$ 
Thus, if $\bigcap_{x\in V\cup E\cup I} \mathfrak{h}_x=0$, one has that $\dim(\pi(A))\geq \dim(G)$. Furthermore, under the condition that $\bigcap_{x\in V\cup E\cup I} \mathfrak{h}_x=0$, one has that $\pi(A)=\pi(i(\mathfrak{g}))$ if and only if $\dim(\pi(A))=\dim(G)$.
\end{remark}

\begin{definition}[Infinitesimal motion, infinitesimal rigidity]
We call elements of $\pi(A)$ infinitesimal motions. We say $\rho$ is infinitesimally rigid if $\pi(A) = \pi(i(\mathfrak{g}))$.
\end{definition}

\begin{example}
Consider a triangle $V=\{v_1,v_2,v_3\}, E=\{e_1=v_1v_2, e_2=v_2v_3, e_{3}=v_1v_3\}$ realised in the Euclidean plane as a bar-joint framework $p$, with $p(v_1) = (0,0), p(v_2) = (1,0),$ and $p(v_3)=(0,1)$. Define a graph of groups by taking $\Stab_{E(d)}(p(v_i))$ for the vertices and $\Stab_{E(d)}(p(v_i), p(v_j))$ for the edges.

The Lie algebras of $\Stab(p(v_1)), \Stab(p(v_2))$ and $\Stab(p(v_3))$, denoted by $\mathfrak{h}_1, \mathfrak{h}_2$ and $\mathfrak{h}_3$ are given respectively by:
\begin{equation*}
    \begin{bmatrix}
        0& -t_1 &0\\
        t_1& 0 & 0\\
        0 & 0 & 0 
    \end{bmatrix},     
    \begin{bmatrix}
        0& -t_2 &0\\
        t_2& 0 & -t_2 \\
        0 & 0 & 0 
    \end{bmatrix},
     \begin{bmatrix}
        0& -t_3 &t_3\\
        t_3& 0 & 0 \\
        0 & 0 & 0 
    \end{bmatrix}
\end{equation*}
where $t_1,t_2, t_3\in \mathbb{R}$. The Lie algebras of the stabilisers of all edges are $\{0\}$. We see that for an infinitesimal motion, the algebra elements need to satisfy
\begin{equation*}
\begin{array}{ccc}
    w_{e_1*v_1}-w_{e_1} = 0, & w_{e_2*v_2}-w_{e_2} = 0, &w_{e_3*v_3}-w_{e_3} = 0,\\
    w_{e_1*v_2}-w_{e_1} = 0, &w_{e_2*v_3}-w_{e_2} = 0, &w_{e_3*v_1}-w_{e_3} =0.
\end{array}
\end{equation*}
Thus $w_{e_1} - w_{e_2} = w_{e_1 *v_2} - w_{e_2 *v_2} = w_{e_1 *v_2} -w_{v_2} + w_{v_2} - w_{e_2 *v_2},$ and hence $w_{e_1} -w_{e_2} \in \mathfrak{h}_2$. One derives in the same way that
\begin{align}\label{equations-edges1}
    w_{e_3} -w_{e_1} \in \mathfrak{h}_1,\\
    w_{e_1} -w_{e_2} \in \mathfrak{h}_2,\\
      \label{equations-edges3}  w_{e_2} -w_{e_3} \in \mathfrak{h}_3.
\end{align}
We write 
\begin{equation*}
w_{e_1}= 
 \begin{bmatrix}
        0& -a_1 &b_1\\
        a_1& 0 & c_1\\
        0 & 0 & 0 
    \end{bmatrix},    
    w_{e_2}=
    \begin{bmatrix}
        0& -a_2 &b_2\\
        a_2& 0 & c_2\\
        0 & 0 & 0 
    \end{bmatrix},    
    w_{e_3}=
    \begin{bmatrix}
        0& -a_3 &b_3\\
        a_3& 0 & c_3\\
        0 & 0 & 0 
    \end{bmatrix}.
\end{equation*}

From equations \eqref{equations-edges1} - \eqref{equations-edges3}, we derive the following equations for the coefficients:
\begin{equation*}
\begin{array}{ccc}
     -a_1 + a_2 = t_2 & b_1 - b_2 = 0 & c_1 - c_2 = -t_2\\
     -a_2 +a_3 = t_3 & b_2 - b_3 = t_3 & c_2 - c_3 = 0\\
     -a_3 +a_1 = t_1 & b_3 - b_1 = 0 & c_3 - c_1 = 0\\
\end{array}
\end{equation*}
Then $t_1 = t_2 =t_3 = 0$, and we see that we must have $w_{e_1} = w_{e_2}=w_{e_3}$. For vertices, we see that $w_{v_i} = w_{e_i} + h_{v_i}$ for some $e_i$ incident to $v$ and some $h_v \in \mathfrak{h}_i$. Hence, we see that $\pi(A)=\pi(i(\mathfrak{g})),$ and hence the graph of groups associated to the triangle is infinitesimally rigid. In Section \ref{sec:examples}, we will give a more detailed discussion for Lie graph-of-groups realisations associated to bar-joint frameworks. 
\end{example}

\subsection{A Maxwell bound for the motions of hypergraphs realised in terms of graphs of groups}\label{sec:Maxwellrk2}

As described in Section \ref{sec:bas_def_rig}, the Maxwell count for bar-joint frameworks gives a lower bound for the dimension of infinitesimal motions of a bar and joint framework. 
Inspired by this bound, in this section we give a lower bound for the dimension of the infinitesimal motions of a graph-of-groups realisation of a hypergraph. For convenience, we recall the following easy result from  linear algebra.
\begin{lemma}\label{intersection-subspaces}
If $V_1,\dots V_n$ are vector subspaces of $\mathbb{R}^{n}$, one has
\begin{enumerate}
    \item     $\dim\left(\sum_{i=1}^{k}V_i\right) \leq\sum_{i=1}^{k} \dim(V_i)$, and 
    \item     $\dim\left(\bigcap_{i=1}^{k} V_i\right) \geq n - \sum_{i=1}^{k} \left(n-\dim(V_i)\right).$\\
\end{enumerate}
\end{lemma}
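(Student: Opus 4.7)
The plan is to reduce both inequalities to the standard two-subspace identity $\dim(U+W) = \dim U + \dim W - \dim(U\cap W)$ and then induct on the number $k$ of subspaces. Both statements are entirely elementary, so I do not expect a genuine obstacle; the main question is just which of several equivalent routes to present.

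For item (1), the cleanest route is to consider the linear map
\begin{equation*}
\Sigma : V_1 \oplus \cdots \oplus V_k \to \mathbb{R}^n, \qquad (v_1,\dots,v_k) \mapsto v_1+\cdots+v_k.
\end{equation*}
The image of $\Sigma$ is $\sum_{i=1}^k V_i$, so by rank-nullity its dimension is at most $\dim(V_1\oplus\cdots\oplus V_k) = \sum_{i=1}^k \dim(V_i)$. Alternatively, one can induct on $k$: the case $k=2$ is immediate from the two-subspace identity, and the step $k \to k+1$ follows by applying that identity to the pair $(\sum_{i\leq k} V_i,\, V_{k+1})$ and invoking the inductive hypothesis.

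For item (2), I would pass to codimensions and again induct. The two-subspace identity rearranges into
\begin{equation*}
n-\dim(U\cap W) \;\leq\; (n-\dim U) + (n-\dim W),
\end{equation*}
since $\dim(U+W) \leq n$. This is the base case $k=2$. For the inductive step, set $W = \bigcap_{i\leq k} V_i$ and apply the base case to the pair $(W, V_{k+1})$, obtaining $n - \dim(W \cap V_{k+1}) \leq (n - \dim W) + (n - \dim V_{k+1})$; combined with the inductive bound $n - \dim W \leq \sum_{i\leq k}(n - \dim V_i)$, this yields the claim for $k+1$ subspaces. A slicker variant, if one prefers to avoid induction, is to observe that $\bigcap_{i=1}^k V_i$ is the kernel of the linear map $\mathbb{R}^n \to \bigoplus_{i=1}^k \mathbb{R}^n / V_i$ defined coordinate-wise by the quotient projections; rank-nullity then gives the bound directly, since the codomain has dimension $\sum_{i=1}^k (n - \dim V_i)$.

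Either formulation suffices, and the only care needed is the bookkeeping in the inductive step of (2), where one must apply the two-subspace codimension bound to the running intersection rather than trying to handle all $V_i$ simultaneously.
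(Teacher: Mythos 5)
Your proof is correct, but it takes a different route from the paper's. For part (1) the paper picks bases $\mathcal{B}_1,\dots,\mathcal{B}_k$ of the $V_i$ and notes that their union spans the sum, hence contains a basis of it; your surjection $V_1\oplus\cdots\oplus V_k\to\sum_i V_i$ plus rank--nullity is the same idea in map language. The real divergence is in part (2): the paper deduces it formally from part (1) by passing to orthogonal complements, using $\bigl(\sum_i V_i\bigr)^{\perp}=\bigcap_i V_i^{\perp}$ and $\dim(V_i^{\perp})=n-\dim(V_i)$, so that (2) is literally the dual of (1). You instead prove (2) directly, either by induction on the two-subspace codimension inequality or by applying rank--nullity to the map $\mathbb{R}^n\to\bigoplus_{i=1}^{k}\mathbb{R}^n/V_i$ whose kernel is the intersection. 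Both are sound; your kernel argument has the small advantage of not invoking an inner product (so it works verbatim over any field), while the paper's duality argument is shorter once (1) is in hand and makes the symmetry between the two statements explicit. All the steps you cite (the two-subspace dimension formula, rank--nullity, the inductive bookkeeping) check out, so there is no gap.
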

\begin{proof}
We first show the first inequality. Let $\mathcal{B}_1, \dots \mathcal{B}_k$ be bases for $V_1,\dots V_k$ respectively. Then $\cup_{i=1}^{k} \mathcal{B}_i$ generates $\sum_{i=1}^{k} V_i$, and therefore we can remove some elements from $\cup_{i=1}^{k} \mathcal{B}_i$ to get a basis for $\sum_{i=1}^{k} V_i$. Thus, $\sum_{i=1}^{k} \dim(V_i)\geq \vert \cup_{i=1}^{k} \mathcal{B}_i \vert \geq \dim(\sum_{i=1}^{k}V_i)$. 

Now we will show the second inequality. Here, we will use the fact that the orthogonal complements satisfy $(\sum_{i=1}^{k} V_i)^{\perp}=\cap_{i=1}^{k} V_i^{\perp}$. Using the fact that $\dim(V_i^{\perp})= n - \dim(V_i)$, we get from the first inequality that
\begin{equation*}
     \dim(\cap_{i=1}^{k} V_i)= n - \dim(\sum_{i=1}^{k} V_i^{\perp}) \geq n - \sum_{i=1}^{k} \dim(V_i^{\perp}) = n - \sum_{i=1}^{k} (n-\dim(V_i)),
\end{equation*}
which is precisely what we wanted to show.
\end{proof}\\

We will prove a bound on the dimension of the space of infinitesimal motions. To do this, we write the space of infinitesimal motions as an intersection of vector spaces, and then use the bound on the dimension of an intersection from Lemma \ref{intersection-subspaces}. First we will prove a more general theorem that will imply our result, but which can also be used in other situations. In particular, we will use it in the proof of Theorem \ref{sparsity}. 

\begin{theorem}\label{maxwellboundLie}
Let $G$ be a Lie group. Suppose that we are given a set $\mathcal{J}$, and a closed Lie subgroup $H_x\leq G$ for each $x\in \mathcal{J}$. Denote the associated Lie algebra by $\mathfrak{h}_x$. Let $C\subset \mathcal{J}^{2}$ be a collection of ordered pairs, such that if $(x,y)\in C$, then $\mathfrak{h}_x \leq \mathfrak{h}_y$. Let 

$$\phi: G\times G\rightarrow G: (g,h)\mapsto g^{-1}h,$$
and 
\begin{equation*}
     N=\bigcap_{(x,y)\in C} d(\phi\circ pr_{x,y})_{\textup{id}}^{-1}(\mathfrak{h}_y)\leq \mathfrak{g}^{\mathcal{J}},
\end{equation*}\bigskip
where $pr_{x,y}:G^{\mathcal{J}} \rightarrow G \times G$ is the projection onto the $x$-th and $y$-th copy of G.
We have that
 \begin{equation*}
\dim(N) \geq\sum_{x\in \mathcal{J}} \dim(\mathfrak{g})- \sum_{(x,y)\in C} \dim(\mathfrak{g} /\mathfrak{h}_y),
\end{equation*}
and
\begin{equation*}
\dim(\pi(N)) \geq\sum_{x\in \mathcal{J}} \dim(\mathfrak{g} /\mathfrak{h}_x)- \sum_{(x,y)\in C} \dim(\mathfrak{g} /\mathfrak{h}_y),
\end{equation*}
where $\pi$ is the map
\begin{equation*}
\begin{array}{rccl}
    \pi: &\mathfrak{g} \times \cdots \times\mathfrak{g} & \rightarrow &\mathfrak{g} /\mathfrak{h}_1 \times \cdots \times\mathfrak{g} /\mathfrak{h}_{\vert J\vert}\medskip\\
    &(v_1, \cdots, v_{\mathcal{J}})&\mapsto &(v_1+\mathfrak{h}_1, \cdots, v_{\mathcal{J}}+ \mathfrak{h}_{\mathcal{J}}),
    \end{array}
\end{equation*}
\end{theorem}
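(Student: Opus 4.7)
The plan is to compute the codimension of each constraint space in the intersection defining $N$, apply the second inequality of Lemma \ref{intersection-subspaces}, and then pass to $\pi(N)$ by identifying the kernel of $\pi$ as a subspace of $N$.

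First, I would unpack the differential. Since $\phi(g,h) = g^{-1}h$, a direct computation at the identity gives $d\phi_{(\textup{id},\textup{id})}(v,w) = -v + w$. Consequently, for $(x,y) \in C$ with $x \neq y$, the map $d(\phi \circ pr_{x,y})_{\textup{id}} : \mathfrak{g}^{\mathcal{J}} \to \mathfrak{g}$ is the linear map $(v_j)_{j \in \mathcal{J}} \mapsto -v_x + v_y$, which is surjective. Therefore the preimage $V_{(x,y)} := d(\phi \circ pr_{x,y})_{\textup{id}}^{-1}(\mathfrak{h}_y)$ has codimension $\dim(\mathfrak{g}/\mathfrak{h}_y)$ in $\mathfrak{g}^{\mathcal{J}}$. (The degenerate case $x=y$ only makes $V_{(x,y)}$ all of $\mathfrak{g}^{\mathcal{J}}$, which poses no issue for a lower bound.) Applying Lemma \ref{intersection-subspaces}(2) to $N = \bigcap_{(x,y) \in C} V_{(x,y)}$ inside the $|\mathcal{J}| \dim(\mathfrak{g})$-dimensional space $\mathfrak{g}^{\mathcal{J}}$ immediately yields
\begin{equation*}
\dim(N) \geq \sum_{x \in \mathcal{J}} \dim(\mathfrak{g}) - \sum_{(x,y) \in C} \dim(\mathfrak{g}/\mathfrak{h}_y),
\end{equation*}
which is the first claimed inequality.

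For the second inequality, the key observation is that $\ker(\pi) = \prod_{x \in \mathcal{J}} \mathfrak{h}_x$ is actually contained in $N$. This is where the hypothesis $\mathfrak{h}_x \leq \mathfrak{h}_y$ for $(x,y) \in C$ enters: given $(h_j)_{j \in \mathcal{J}}$ with $h_j \in \mathfrak{h}_j$, we have $-h_x + h_y \in \mathfrak{h}_y$ since $h_x \in \mathfrak{h}_x \subseteq \mathfrak{h}_y$, so $(h_j)_j \in V_{(x,y)}$ for each $(x,y) \in C$. Hence $\ker(\pi) \subseteq N$, and the rank--nullity identity gives
\begin{equation*}
\dim(\pi(N)) = \dim(N) - \dim(\ker(\pi)) = \dim(N) - \sum_{x \in \mathcal{J}} \dim(\mathfrak{h}_x).
\end{equation*}
Combining this with the first inequality and simplifying $\dim(\mathfrak{g}) - \dim(\mathfrak{h}_x) = \dim(\mathfrak{g}/\mathfrak{h}_x)$ yields the second bound.

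The only nontrivial point is the inclusion $\ker(\pi) \subseteq N$, which I expect to be the main (though still mild) subtlety, as it is exactly the place where the compatibility condition on $C$ is used; everything else is bookkeeping with codimensions and Lemma \ref{intersection-subspaces}(2). I would not grind through the surjectivity verifications beyond the $x \neq y$ observation, and would state the edge case $x = y$ briefly in passing.
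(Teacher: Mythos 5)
Your proposal is correct and follows essentially the same route as the paper's proof: compute the codimension of each constraint space via surjectivity of $d\phi_{\textup{id}}$ and rank--nullity, intersect using Lemma \ref{intersection-subspaces}, establish $\ker(\pi)\subseteq N$ from the hypothesis $\mathfrak{h}_x\leq\mathfrak{h}_y$, and conclude by rank--nullity for $\pi$. Your brief remark on the degenerate case $x=y$ is a small addition the paper does not make, but it changes nothing of substance.
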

\begin{proof}
    The map $d\phi_{\textup{id}}: \mathfrak{g} \times \mathfrak{g}\rightarrow \mathfrak{g}: (v,w) \mapsto -v +w$ is surjective, and hence by rank-nullity for any $\mathfrak{h}\subset \mathfrak{g}$, one has $\dim( (d\phi)_{\textup{id}}^{-1}(\mathfrak{h}))= \dim(\mathfrak{g}) + \dim(\mathfrak{h})$. 
Then, by rank-nullity applied to $\pi$, we furthermore have
\begin{equation*}
        \dim d(pr_{x,y})_{\textup{id}}^{-1}((d\phi)_{\textup{id}}^{-1}(\mathfrak{h})) = (n-1) \dim(\mathfrak{g}) + \dim(\mathfrak{h}).
    \end{equation*}
By Lemma \ref{intersection-subspaces}, we find
\begin{equation*}
    \dim(N)\geq\vert \mathcal{J} \vert \dim(\mathfrak{g}) + \sum_{(x,y)\in C} \left( - \dim(\mathfrak{g}) + \dim(\mathfrak{h}_y) \right).
\end{equation*}
Now, let us show $\ker(\pi) \subset N$. Let $(w_1, \cdots w_n)\in \ker(\pi)$, so $w_i \in \mathfrak{h}_i$ for every $i$. Let $(i, j)\in C$. Then we have
\begin{align*}
    d\phi_{\textup{id}} (w_i, w_j) &= -w_i +w_j \in \mathfrak{h}_j
\end{align*}
since $w_i\in \mathfrak{h}_i\subset \mathfrak{h}_j$, and $w_j\in \mathfrak{h}_j$. By definition of $N$, we conclude that $(w_1,\cdots w_n)\in N$.

Thus, since $\ker(\pi)$ is contained in $N$, we have
$$\begin{array}{rcl}
    \dim(\pi(N))&=&\dim(N) - \dim(\ker(\pi))\\\\
    &=&\dim(N) - \sum_{x\in  \mathcal{J}} \dim(\mathfrak{h}_x)\\\\
    &\geq& \sum_{x\in  \mathcal{J}}\left(\dim(\mathfrak{g})- \dim(\mathfrak{h}_x) \right)- \sum_{(x,y)\in C}  \left(\dim(\mathfrak{g}) - \dim(\mathfrak{h}_y)\right) \\\\
    &=&\sum_{x\in \mathcal{J}} \dim(\mathfrak{g} /\mathfrak{h}_x)- \sum_{(x,y)\in C} \dim(\mathfrak{g} /\mathfrak{h}_y).
\end{array}$$
\end{proof}\\

We now give a bound on the dimension of the infinitesimal motions of a Lie graphs-of-groups realisation of a hypergraph. 
\begin{theorem}\label{maxwellincLieRk2}
Let $\rho(\Gamma)$ be a Lie graph-of-groups realisation in a group $G$. Let $\pi(A)$ be the set of infinitesimal motions. Then 
\begin{align*}
\dim(\pi(A)) \geq& \sum_{x\in V\cup E} \dim(G/ \rho(x)) +\sum_{i\in I} \dim(G/ \rho(i)) \\&- \sum_{i=x*y \in I}(\dim(G/ \rho(x)) +\dim(G/ \rho(y))) 
\end{align*}

\end{theorem}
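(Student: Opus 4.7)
The plan is to deduce this bound as a direct application of Theorem \ref{maxwellboundLie}, with an appropriate choice of indexing set and pair collection tailored to the graph-of-groups setup. First, I would take $\mathcal{J} = V \cup E \cup I$ and, for each $x \in \mathcal{J}$, assign the closed Lie subgroup $H_x := \rho(x)$ with Lie algebra $\mathfrak{h}_x$; this is legitimate since $\rho(\Gamma)$ is a Lie graph-of-groups realisation, so every $\rho(x)$ is a closed Lie subgroup.

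Next, reading off the definition of $A$ from equation \eqref{eq_infinitesimal_motion}, I see that for every incidence $i = x*y$ (with $x \in V$, $y \in E$), the constraints are $-v_i + v_x \in \mathfrak{h}_x$ and $-v_i + v_y \in \mathfrak{h}_y$. This suggests setting
\begin{equation*}
    C = \{(i,x) : i \in I,\ i = x*y\} \cup \{(i,y) : i \in I,\ i = x*y\} \subseteq \mathcal{J}^2.
\end{equation*}
To apply Theorem \ref{maxwellboundLie} I must check the hypothesis $\mathfrak{h}_i \leq \mathfrak{h}_x$ (and $\mathfrak{h}_i \leq \mathfrak{h}_y$) for each such pair. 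This is immediate from the graph-of-groups construction: by definition $\rho(i) = \rho(x) \cap \rho(y) \leq \rho(x)$, so passing to Lie algebras gives the desired containment.

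With this data, the subspace $N$ from Theorem \ref{maxwellboundLie} coincides with $A$, since both are cut out by exactly the same family of linear conditions of the form $d(\phi\circ pr_{i,x})_{\textup{id}}^{-1}(\mathfrak{h}_x)$. Applying the second inequality of Theorem \ref{maxwellboundLie} yields
\begin{equation*}
\dim(\pi(A)) \geq \sum_{x \in V \cup E \cup I} \dim(\mathfrak{g}/\mathfrak{h}_x) - \sum_{(x,y) \in C} \dim(\mathfrak{g}/\mathfrak{h}_y).
\end{equation*}
It remains to rewrite the sums in the form stated. Splitting the first sum according to whether the index lies in $V \cup E$ or in $I$ gives the two terms $\sum_{x \in V\cup E}\dim(G/\rho(x))$ and $\sum_{i \in I}\dim(G/\rho(i))$. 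For the second sum, each incidence $i = x*y \in I$ contributes exactly two pairs to $C$, namely $(i,x)$ and $(i,y)$, producing the summand $\dim(G/\rho(x)) + \dim(G/\rho(y))$; reindexing the sum over $C$ as a sum over incidences yields the claimed expression.

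The proof is essentially bookkeeping on top of Theorem \ref{maxwellboundLie}; the only subtlety is in correctly matching the combinatorial data of the incidence graph (which supplies the two containments $\mathfrak{h}_i \leq \mathfrak{h}_x, \mathfrak{h}_y$ per incidence) against the abstract pair structure $C$ required by the general Lie-theoretic bound, and then organising the resulting sum so that each incidence contributes the expected pair of codimensions.
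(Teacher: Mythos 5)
Your proposal is correct and follows essentially the same route as the paper: apply Theorem \ref{maxwellboundLie} with $\mathcal{J}=V\cup E\cup I$ and $C$ consisting of the pairs $(i,x)$ for each incidence $i$ containing $x$, noting that $\rho(i)=\rho(x)\cap\rho(y)$ gives the required containment of Lie algebras and that $N$ coincides with $A$. The only step the paper makes explicit that you leave implicit is the identification $\dim(G/\rho(x))=\dim(\mathfrak{g}/\mathfrak{h}_x)$ for closed Lie subgroups, which is immediate from Theorem \ref{fiber_bundle}.
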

\begin{proof}
    We apply Theorem \ref{maxwellboundLie} with $\mathcal{J}= V\cup E\cup  I$ and $$C= \{(i,x)\in \mathcal{J}^{2} ~\vert~ i\text{ is an incidence with } i=x*y \text{ or } i=y*x\}.$$
    Since for any closed Lie group $H_x\leq G$, one has $\dim(G/ H_x)= \dim(\mathfrak{g}/\mathfrak{h}_x)$, this yields the theorem. 
\end{proof}\\

If the dimension of the groups $\dim\left(G/\rho(v)\right)=k_1$ is constant for $v\in V$,  $\dim\left(G/\rho(e)\right)=k_2$ is constant for $e\in E$, and $\lambda=k_1+k_2-\dim\left(G/\rho(i)\right)$ is constant for all incidences $v*e$  then Theorem \ref{maxwellincLieRk2}
can be rewritten as
\begin{equation*}
\dim(\pi(A)) \geq k_1 \vert V \vert  +k_2 \vert E \vert - \lambda \vert I \vert. 
\end{equation*}

Suppose that $\pi(i(\mathfrak{g}))= \pi(A)$.
In this case, the following bound on the incidences holds
\begin{equation*}
 \lambda \vert I \vert \geq k_1 \vert V \vert  +k_2 \vert E \vert -\dim(G).
\end{equation*}
Hence, there are at least $$\frac{ k_1 \vert V \vert  +k_2 \vert E \vert -\dim(G) }{ \lambda }$$ incidences in any hypergraph for which all motions are trivial. We will now prove that if $\rho(\Gamma)$ is infinitesimally rigid and if
$$\lambda \vert I \vert =k_1 \vert V \vert  +k_2 \vert E \vert -\dim(G),$$
then a certain sparsity condition holds on the amount of incidences. In other words, we derive a necessary condition for minimal rigidity to hold, where minimal refers to the minimal number of incidences. We will see in Section \ref{sec:examples} that from this one can retrieve the classical Maxwell bound when $G =E(d)$.
 
\begin{theorem}\label{sparsity}
    Let $\Gamma=(V,E)$ be a hypergraph, with set of incidences $I$. Let $\rho$ be a Lie graph-of-groups realisation. Assume that there are integers $k_1$, $k_2$ and $\lambda$ such that $\dim(\rho(v))=\dim(G) - k_1$ for all vertices $v\in V$, $\dim(\rho(e))=\dim(G) - k_2$ for all edges $e\in E$, and $\dim(\rho(i))=\dim(G) - k_1 -k_2 +\lambda$ for all incidences $i\in I$. 
    Suppose that $\rho(\Gamma)$ is infinitesimally rigid, and that $\Gamma$ satisfies 
    \begin{equation}
    \label{eq:assumption}
        \lambda \vert I \vert = k_1 \vert V \vert  +k_2 \vert E \vert -\dim(G).
    \end{equation} 
    Then for any subset $I'\subset I$, such that $\cap_{x\in I'} \mathfrak{h}_x = \{ 0 \}$, one has
        \begin{equation}\label{condition}
        \lambda \vert I' \vert \leq k_1 \vert V(I') \vert  +k_2 \vert E(I') \vert -\dim(G).
    \end{equation} 
    where $$V(I')=\{v\in V\:\vert \: \exists i\in I', i=v*e\}$$ and $$E(I')=\{e\in E\:\vert \: \exists i\in I', i=v*e\}.$$

\end{theorem}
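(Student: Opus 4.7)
The approach is to exploit the fact that, under the hypotheses, the Maxwell inequality of Theorem~\ref{maxwellincLieRk2} is actually attained with \emph{equality}, and that this tightness descends to every sub-realisation obtained by restricting to a subset of the incidences. Comparing the restricted equality with the trivial motions contribution will then yield (\ref{condition}).

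First I would check that the Maxwell bound for $\rho(\Gamma)$ is tight. Infinitesimal rigidity combined with Remark~\ref{trivialmotions} gives $\dim(\pi(A)) = \dim(G) - \dim\left(\bigcap_{x\in V\cup E\cup I}\mathfrak{h}_x\right) \leq \dim(G)$, while the Maxwell bound together with the hypothesis (\ref{eq:assumption}) forces $\dim(\pi(A)) \geq \dim(G)$. Hence $\dim(\pi(A)) = \dim(G)$, $\bigcap_x \mathfrak{h}_x = 0$, and the sole inequality step in the proof of Theorem~\ref{maxwellincLieRk2}, namely Lemma~\ref{intersection-subspaces}(2), is an equality.

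Unpacking this: the proof of Theorem~\ref{maxwellboundLie} writes $A = \bigcap_{(i,x)\in C} A_{i,x}$ with $A_{i,x} = \{w\in\mathfrak{g}^{V\cup E\cup I} : -w_i + w_x \in \mathfrak{h}_x\}$, and tightness in Lemma~\ref{intersection-subspaces}(2) is equivalent to the sum $\sum_{(i,x)\in C} A_{i,x}^{\perp}$ being direct. Directness is inherited by every subcollection, so for $C' = \{(i,x)\in C : i\in I'\}$ the sum $\sum_{(i,x)\in C'} A_{i,x}^{\perp}$ is also direct. Each constraint indexed by $C'$ only involves coordinates in $V(I')\cup E(I')\cup I'$ (because $i\in I'$ forces $x$ to be an endpoint of $i$, hence $x\in V(I')\cup E(I')$), so via the zero-padding inclusion $\mathfrak{g}^{V(I')\cup E(I')\cup I'}\hookrightarrow\mathfrak{g}^{V\cup E\cup I}$ this directness is exactly the tightness of the Maxwell bound for the induced sub-realisation $\rho'$ of $(V(I'), E(I'))$ with incidence set $I'$. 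Denoting its infinitesimal motion space by $A'$, one obtains the equality
$$\dim(\pi(A')) = k_1|V(I')| + k_2|E(I')| - \lambda|I'|.$$

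To close, the hypothesis $\bigcap_{x\in I'}\mathfrak{h}_x = \{0\}$ forces $\bigcap_{x\in V(I')\cup E(I')\cup I'}\mathfrak{h}_x = \{0\}$, so Remark~\ref{trivialmotions} applied to $\rho'$ produces an embedding $\mathfrak{g}\hookrightarrow \pi(A')$ and therefore $\dim(\pi(A')) \geq \dim(G)$. Combining this with the equality above yields $k_1|V(I')| + k_2|E(I')| - \lambda|I'| \geq \dim(G)$, which rearranges to (\ref{condition}). The only delicate step is the transfer of the tightness in Lemma~\ref{intersection-subspaces}(2) from the full realisation to the induced sub-realisation; once that is established, the rest is bookkeeping.
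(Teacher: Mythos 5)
Your proof is correct, and it takes a genuinely different route from the one in the paper. The paper argues by contradiction and works with the \emph{complement} $I'^{c}=I\setminus I'$: it splits $A=A_0\cap A_1$ into the constraints coming from $I'$ and from $I'^{c}$, lower-bounds $\dim(A_0)$ by exhibiting a direct sum $\textup{im}(i)\oplus C\subseteq A_0$ (using $\cap_{x\in I'}\mathfrak{h}_x=0$), lower-bounds $\dim(A_1)$ by Theorem \ref{maxwellboundLie} applied to $I'^{c}$, and then applies the two-subspace intersection inequality to force $\dim(\pi(A))>\dim(G)$, contradicting rigidity. You instead never touch the complement: you observe that rigidity plus the count \eqref{eq:assumption} forces equality in the unique inequality of Theorem \ref{maxwellboundLie}, i.e.\ the sum $\sum_{(i,x)\in C}A_{i,x}^{\perp}$ is direct; directness passes to the subfamily indexed by $I'$, whose summands are supported on the coordinates $V(I')\cup E(I')\cup I'$, so the Maxwell bound for the induced sub-realisation is also an equality, and comparing with the trivial-motion lower bound $\dim(\pi(A'))\geq\dim(G)$ (valid because $\cap_{x\in I'}\mathfrak{h}_x=0$) gives \eqref{condition} directly. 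Both arguments ultimately rest on Lemma \ref{intersection-subspaces}, but yours is direct rather than by contradiction, and it isolates a reusable structural fact — tightness of the Maxwell count is hereditary under restriction to a subset of incidences, and in fact pins down $\dim(\pi(A'))$ exactly as $k_1\vert V(I')\vert+k_2\vert E(I')\vert-\lambda\vert I'\vert$ — whereas the paper's complementary count only ever needs inequalities. The one step you flag as delicate, the transfer of directness through the zero-padding inclusion, is sound precisely because each $A_{i,x}^{\perp}$ is supported on the $i$-th and $x$-th factors only.
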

\begin{proof}
We use the same notations as in the proof of Theorem \ref{maxwellboundLie}. 
Suppose for a contradiction that there exists a subset of incidences $I'$, which violates condition (\ref{condition}), so
$$\lambda \vert I'\vert > k_1 \vert V(I') \vert  +k_2 \vert E(I') \vert -\dim(G),$$
and $\cap_{x\in I'} \mathfrak{h}_x = \{ 0 \}$. 

We consider the complement $I'^c= I\setminus I'$.  Equality \eqref{eq:assumption} implies that 
\begin{align}\label{lambdabound1}
    \lambda \vert I'^c\vert &< k_1 (\vert V\vert- \vert V( I') \vert)  +k_2 (\vert E\vert - \vert E( I') \vert).
\end{align}
For an incidence, we write $x\in i$ to denote that $i=x*y$ or $i=y*x$. Now, let 
\begin{equation*}
    A_0 = \bigcap_{i=v*e\in I'} \left( d(\phi\circ pr_{v,i})_{\textup{id}}^{-1}(\mathfrak{h}_v)\cap d(\phi\circ pr_{e,i})_{\textup{id}}^{-1}(\mathfrak{h}_e)\ \right) \leq \prod_{x\in V \cup E\cup I } \mathfrak{g},
\end{equation*}
and
\begin{equation*}
    A_1 =   \bigcap_{i=v*e\in I'^c} \left( d(\phi\circ pr_{v,i})_{\textup{id}}^{-1}(\mathfrak{h}_v)\cap d(\phi\circ pr_{e,i})_{\textup{id}}^{-1}(\mathfrak{h}_e)\ \right)\leq \prod_{x\in V \cup E\cup I } \mathfrak{g}.
\end{equation*}

We first bound the dimension of $A_0$. To this end, we consider the map $i: \mathfrak{g} \rightarrow A_0 : v\mapsto (v, \dots, v)$. 
Let $$
C=\{(v_i)_{i\in V \cup E\cup I}\in \prod_{x\in V \cup E\cup I } \mathfrak{g} ~\vert ~v_i\in \mathfrak{h}_x \text{ if }x\in V(I') \cup E(I')\cup I'\}.
$$
We see by the definition of $A_0$, 
\begin{equation*}
    \textup{im}(i) + C \subset A_0.
\end{equation*}
Since \begin{equation*}
    \bigcap_{x\in I'} \mathfrak{h}_x =\{0\},
\end{equation*}
it follows that $\dim( \textup{im}(i) \cap  C )= 0$, and thus, we see that $\dim(A_0)\geq \dim(\textup{im}(i)) + \dim(C)$.
We thus have
\begin{align*} \dim(A_0) \geq  &\dim(G) + \sum_{x\in V(I') \cup E(I')\cup I' } \dim(\mathfrak{h}_x)\\ &+ \sum_{x\in (V \setminus V(I')) \cup (E \setminus E(I'))\cup I \setminus I' } \dim(G).
\end{align*}
To bound the dimension of $A_1$, we can apply Theorem \ref{maxwellboundLie} and we find
\begin{align*}
    \dim(A_1) &\geq (\vert V \vert + \vert E \vert +  \vert I \vert)\dim(G) - \sum_{i\in I'^c} \left( \dim(G) - \dim(\mathfrak{h}_v)  + \dim(G) - \dim(\mathfrak{h}_e)\right) \\
    &=  (\vert V \vert + \vert E \vert + \vert I'\vert)\dim(G) +  \dim(\mathfrak{h}_i)\vert I'^c\vert - \lambda \vert I'^c \vert 
\end{align*}
The second line follows because $$\lambda=  \dim(G) + \dim(\mathfrak{h}_i) -\dim(\mathfrak{h}_v) - \dim(\mathfrak{h}_e).$$
Then, by the bound on an intersection of two subspaces, we find
\begin{align*}
    \dim(A_0 \cap A_1) &\geq \dim(A_0) + \dim(A_1) -\left(\vert V \vert + \vert E \vert +  \vert I \vert\right)\dim(G) \\
    & \geq \dim(G) + \sum_{x\in V(I') \cup E(I') \cup I'} \dim(\mathfrak{h}_{x}) + \sum_{x\in V\setminus V(I') \cup E\setminus E(I')} \dim(G)\\
    &  - \lambda \vert I'^{c}\vert + \dim(\mathfrak{h}_i) \vert I'^{c}\vert.
\end{align*}
By inequality \eqref{lambdabound1}, and since $k_1 = \dim(G)-\dim(\mathfrak{h_v})$ and $k_2 = \dim(G)-\dim(\mathfrak{h_e})$ 
\begin{align*}
    \dim(A_0 \cap A_1) > \dim(G) + \sum_{x\in V\cup E \cup I}\dim(\mathfrak{h}_x).
\end{align*}
Thus
\begin{equation*}
    \dim(\pi(A)) = \dim(\pi(A_0\cap A_1)) > \dim(G),
\end{equation*}
which is a contradiction with the assumption that $\pi(A) = \pi(i(\mathfrak{g}))$.
\end{proof}

\subsection{Rigidity implies connectivity}

 We now establish that rigidity implies a certain degree of connectivity. Indeed, it turns out that if there exists a disconnecting subset $C$ such that $H = \bigcap_{x\in C} \rho(x) \neq \emptyset$, then one can find nontrivial motions, by picking elements from $H$ in a certain way. For graph-of-group realisations in Lie groups, it will follow that if $H$ is a Lie subgroup of dimension $\geq 1$, then there exist continuous motions of $\rho$, and $\rho$ cannot be rigid.

Let $\Gamma$ be a hypergraph. We say that a path $v\rightarrow u$ in a hypergraph is a sequence $v_0,\dots, v_n$ of vertices such that $v_i$ and $v_{i+1}$ are contained in an edge for all $i$, and $v_0=v$, $v_{n}=u$. A connected component of a hypergraph is a maximal set of vertices $V_i\subset V$ such that there exists a path between any two vertices $x,y\in V_i$.
A disconnecting set in a hypergraph is a set of vertices $V'\subseteq V$ such that there exist $v,w\in V\setminus V'$ such that any path between $v$ and $w$ contains a vertex from $V'$. If $V'\subset V$ is a disconnecting set, then the graph $\Gamma[V\setminus V']=(V\setminus V', E(V\setminus V'))$, where $E(V\setminus V') =\{e\in E ~ | ~ e\subset V\setminus V'\}$, splits into different components $C_1, \dots, C_k$, and then any edge $e\in E(V\setminus V')$ satisfies $e\subseteq E(C_j)$ for some $j$.

\begin{theorem}\label{banana}
    Suppose we are given a graph-of-groups realisation $\rho(\Gamma)$. Suppose that $V_0 \subset V$ is a disconnecting set of $\Gamma$, and that $\cap_{x\in V_0}\rho(x)\neq \{\textup{id}\}$, so there is some non-identity $\sigma \in \cap_{x\in V_0}\rho(x)$. Let $C_1, \dots C_k$ be the components of the disconnected hypergraph $\Gamma[V \setminus V_0]$. Pick some $\sigma \in \cap_{x\in V_0}\rho(x)$, and define $M=(\sigma_x)_{x\in V\cup E\cup I}$ on vertices and edges by $$\begin{cases}  
    \sigma_v=\sigma \text{ for } v \in C_1 \\
    \sigma_e=\sigma \text{ for } e \in E\text{ such that } v\in C_1 \text{ for some }v\in e\\
    \sigma_x=\textup{id}\text{ otherwise},
\end{cases}$$ 
and on incidences by $$\sigma_i = \sigma_e \text{ if } i=v*e.$$ The tuple $M$ defines a motion, and is not equivalent to a trivial motion if there is some subset of vertices $K_1 \subset C_1$ and some  $K_2 \subset C_2,\dots C_k$  such that $\bigcap_{x\in V_0 \cup K_1} \rho(x)= \bigcap_{y\in V_0 \cup K_2} \rho(y) =\{\textup{id}\}$. 
\end{theorem}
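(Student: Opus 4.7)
The plan is to prove the two claims separately: first, that the tuple $M$ is a motion of $\rho(\Gamma)$ in the sense of Definition \ref{def:motion}, and second, under the hypothesis $\bigcap_{x\in V_0 \cup K_1}\rho(x) = \bigcap_{y\in V_0 \cup K_2}\rho(y) = \{\textup{id}\}$, that $M$ cannot be equivalent to any trivial motion $(g,g,\ldots,g)$.

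For the motion check, I would verify the condition $\sigma_i^{-1}\sigma_x \in \rho(x)$ from \eqref{eq:incidencepreserving} at each incidence $i = v*e$ by case analysis on the location of $v$ relative to $C_1$ and $V_0$. Since $\sigma_i$ is defined to equal $\sigma_e$, the edge-side condition $\sigma_i^{-1}\sigma_e = \textup{id} \in \rho(e)$ is automatic. For the vertex-side: if $v \in C_1$, then the edge $e$ meets $C_1$, so $\sigma_v = \sigma_i = \sigma$ and the product is $\textup{id}$; if $v \in C_j$ for some $j \neq 1$, then $e$ lies in $E(C_j)$ and does not meet $C_1$, so both $\sigma_v$ and $\sigma_i$ are $\textup{id}$; finally, if $v \in V_0$ and $e$ meets $C_1$, then $\sigma_v = \textup{id}$ and $\sigma_i = \sigma$, so $\sigma_i^{-1}\sigma_v = \sigma^{-1}$, which lies in $\rho(v)$ precisely because $\sigma$ was chosen in $\bigcap_{x\in V_0}\rho(x)$. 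The remaining subcase $v \in V_0$ with $e$ disjoint from $C_1$ gives $\sigma_i = \sigma_v = \textup{id}$ trivially.

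For the non-triviality claim, I would argue by contradiction. Suppose $M \sim (g,g,\ldots,g)$; then by the definition of equivalence there exists a motion $N = (\tau_x)$ with $\tau_x \in \rho(x)$ for every $x \in V\cup E\cup I$ and $\sigma_x = g\tau_x$ for every $x$. Evaluating at any $v \in V_0 \cup K_2$ gives $\sigma_v = \textup{id}$, hence $\tau_v = g^{-1}$ and therefore $g^{-1} \in \rho(v)$. The hypothesis $\bigcap_{x\in V_0 \cup K_2}\rho(x) = \{\textup{id}\}$ then forces $g = \textup{id}$. Substituting back, for every $v \in K_1$ one has $\sigma_v = \sigma$ and hence $\tau_v = \sigma \in \rho(v)$; together with $\sigma \in \bigcap_{x\in V_0}\rho(x)$ this yields $\sigma \in \bigcap_{x\in V_0 \cup K_1}\rho(x) = \{\textup{id}\}$, contradicting the choice of $\sigma$ as a non-identity element.

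The main obstacle is the motion check, since one must be careful about hyperedges meeting both $C_1$ and $V_0$ and exploit the structural fact that $E(V\setminus V_0)$ decomposes cleanly into the components $C_j$; the contradiction argument is then a short algebraic manipulation using the composition law for morphisms in $\mathcal{M}$.
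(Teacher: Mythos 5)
Your proof is correct and follows essentially the same route as the paper's: the same incidence-by-incidence case analysis for the motion check (including the same implicit use of the fact that a hyperedge meeting $C_1$ cannot also meet another component $C_j$, which both arguments rely on), and the same contradiction argument for non-triviality, merely applying the $K_2$-condition to force $g=\textup{id}$ before invoking the $K_1$-condition, where the paper does the two steps in the reverse order. Nothing of substance differs.
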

\begin{proof}
We see that $M$ is a motion since if $i=(x,e)$, then $\sigma_i^{-1}\sigma_e= \textup{id}$, and $$\sigma_i^{-1}\sigma_x=\begin{cases}
    \sigma^{-1} \text{ if } x\in V_0, i=x*e \text{ and } e\in E(V_0\cup C_1)\\
    \textup{id} \text{ otherwise}.
\end{cases}$$
Thus, in any case one has $\sigma_i^{-1}\sigma_x\in \rho(x)$.

Suppose for a contradiction that the motion was equivalent to a trivial motion given by $\sigma'\in G$, and the conditions on $K_1, K_2$ hold. Then since $\sigma'^{-1}\sigma\in \rho(v)$ for all $v\in C_1$, and since $\sigma'^{-1}\in \rho(v)$ for $v\in V_0$, we have $\sigma'^{-1}\sigma \in \bigcap_{x\in V_0 \cup K_1} \rho(x)$. Then $\sigma= \sigma'$. Since one has $\sigma'^{-1}\in \rho(v)$ for $v\in K_2 \cup V_0$, we have $\sigma' \in \bigcap_{y\in V_0 \cup K_2} \rho(y)$. Then one must have $\sigma=\textup{id}$, which is a contradiction with how the motion is defined.
\end{proof}\\

Whenever $V_0$ is as in the theorem above, and $G$ is a Lie group such that $H = \bigcap_{v\in V_0} \rho(v)$ is a Lie subgroup $\dim(H)\geq 1$, there exist nontrivial continuous motions. To see why, pick a continuous curve $\sigma: [0,1] \rightarrow H$ with $\sigma(0)=\textup{id}$, and $\sigma(t) \neq \textup{id}$ for some $t>0$. Then define a continuous by $t \mapsto (M_t)$, where $M_t$ is defined as in the theorem using $\sigma(t)$ for each $t$.

The resulting motion will not be equivalent to a congruence for some $t$. In Example \ref{Example_banana}, we apply Theorem \ref{banana} to the so-called double banana graph to explain why it is not rigid.

\subsection{Motions and sections}

We now give an interpretation of motions in terms of sections of a certain graph, equipped with a homomorphism to the incidence graph.  Although this interpretation applies also for Lie groups, the resulting graphs would have infinite degree, so the reader may find it preferable to think of finite groups.

\begin{definition}
    A graph homomorphism between two graphs $\Gamma_1=(V_1, E_1)$ and $\Gamma_2=(V_2, E_2)$ is a function $f: V_1 \rightarrow V_2$ such that if $v_1v_2$ is an edge, $f(v_1)f(v_2)$ is an edge. We denote the set of graph homomorphisms $f: \Gamma_1 \rightarrow \Gamma_2$ by $\textup{Hom}_{Grph}(\Gamma_1, \Gamma_2)$.
    
    Given a graph homomorphism $f: \Gamma_1 \rightarrow \Gamma_2$, we say that a section of $f$ is a graph homomorphism $s: \Gamma_2 \rightarrow \Gamma_1$ such that $f\circ s = \textup{id}$.
\end{definition}

Let $\rho(\Gamma)$ be a graph-of-groups realisation of a hypergraph $\Gamma = (V, E)$ in a group $G$. Consider the graph $X=(V(X),E(X))$, with vertex set $V(X)$ and edge set $E(X)$ given by:
\begin{align}
\label{vertices}V(X) &= \bigsqcup_{x\in V\cup E} G/\rho(x)\\
\label{edges}E(X) &= \bigsqcup_{i\in I} G/\rho(i)
\end{align}
where for any $i = v*e \in I$, the edge $g\rho(i)$ has endpoints $o(e)$ and $t(e)$, given by:
\begin{equation*}
\begin{array}{cc}
    o(e) = g\rho(v) & t(e) = g\rho(e).
\end{array}
\end{equation*}
This is well defined since $\rho(i) \subseteq \rho(x)$ for $x=v$ or $x=e$.

The union in (\ref{vertices}) and (\ref{edges}) is disjoint, so while one may have an equality between cosets $g\rho(x) = g' \rho(y)$, one will have two copies of a vertex appearing in the disjoint union. 
We have a map
\begin{equation*}
    q: V(X) \rightarrow V\cup E: g\rho(x) \mapsto  x.
\end{equation*}
This defines a graph morphism from $X \rightarrow I(\Gamma)$, where $I(\Gamma)$ is the incidence graph of $\Gamma$. We have an action of $G$ on $X$, given by
\begin{equation*}
    g\cdot (g'\rho(x))=gg'\rho(x).
\end{equation*}
If $\bigcap_{x\in V\cup E\cup I} \rho(x) = \{\textup{id}\}$, this action is faithful. Furthermore, the map $q:X\rightarrow I(\Gamma)$ is invariant under the action of $G$. 

\begin{theorem}\label{sections - motions}
    Let $\rho(\Gamma)$ be a graph of groups realisation, and let $X$ be constructed as above. We have a bijection of sets
    \begin{equation*}
    \{\text{Motions of } \rho(\Gamma)\}/\sim ~\longrightarrow \{s\in \textup{Hom}_{Grph}(I(\Gamma), X) ~\vert ~ q\circ s = \textup{id}\},
    \end{equation*}
    where $\sim$ denotes the equivalence of motions.
    The bijection is given by
    \begin{equation*}
    [M] \mapsto s_M
    \end{equation*}
    where $s_M$ is defined for any motion $M=(g_x)_{x\in V\cup E\cup I }$ by
    \begin{equation*}
        s_M: V\cup E \rightarrow X: x\mapsto g_x\rho(x).
    \end{equation*}
\end{theorem}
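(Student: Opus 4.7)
The plan is to verify the claimed bijection by four routine checks, the main content being a careful translation between motion conditions and adjacencies in the graph $X$. I would begin by checking that for any motion $M = (g_x)_{x\in V\cup E\cup I}$ the assignment $s_M(x) = g_x\rho(x)$ genuinely defines a section of $q$. The condition $q\circ s_M = \textup{id}$ is immediate from the definition of $q$. For the graph-homomorphism condition on an incidence $i = v*e$, the defining motion condition $g_i^{-1}g_x\in \rho(x)$ for $x\in\{v,e\}$ rewrites as $g_i\rho(v) = g_v\rho(v)$ and $g_i\rho(e) = g_e\rho(e)$, which is exactly the statement that the edge $g_i\rho(i)$ of $X$ joins $s_M(v)$ to $s_M(e)$.

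Next I would verify that the map $M \mapsto s_M$ factors through $\sim$ and is injective on equivalence classes. Unwinding the definition of $\sim$ shows $M_1 \sim M_2$ if and only if $g_x^{(1)}\rho(x) = g_x^{(2)}\rho(x)$ for every $x$, which is the same as $s_{M_1} = s_{M_2}$. The nontrivial direction is that if these cosets agree, then $\sigma_x := (g_x^{(2)})^{-1}g_x^{(1)}\in \rho(x)$ assembles into a motion (since $\sigma_i\in \rho(i)\subseteq \rho(x)$ together with $\sigma_x\in \rho(x)$ forces $\sigma_i^{-1}\sigma_x\in \rho(x)$), witnessing $M_1 = M_2\circ(\sigma_x)$.

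The main obstacle is surjectivity, where the combinatorial structure of $X$ really gets used. Given a section $s$, I pick representatives $g_x$ with $s(x) = g_x\rho(x)$ and need to produce components $g_i$ on incidences. The graph-homomorphism condition supplies some edge of $X$ from $s(v)$ to $s(e)$; the key observation is that an edge in the component $G/\rho(j)$ with $j = v'*e'$ has endpoints in $G/\rho(v')$ and $G/\rho(e')$, so matching $s(v)\in G/\rho(v)$ and $s(e)\in G/\rho(e)$ forces $j = i$. Two representatives of this edge differ by an element of $\rho(v)\cap \rho(e) = \rho(i)$, so any $g_i$ with $g_i\rho(i)$ equal to this unique edge gives a well-defined tuple whose motion axioms hold by construction and for which $s_M = s$.
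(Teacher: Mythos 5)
Your overall strategy is the same as the paper's, and your treatment of the well-definedness of $s_M$ as a section and of surjectivity is correct --- indeed your surjectivity argument is slightly more explicit than the paper's in spelling out why the edge of $X$ joining $s(v)$ and $s(e)$ must lie in the component $G/\rho(i)$ with $i=v*e$ (because the vertex and edge sets of $X$ are \emph{disjoint} unions). However, there is a genuine gap in the injectivity step. You assert that $M_1\sim M_2$ is equivalent to $s_{M_1}=s_{M_2}$, but $s_M$ only records the cosets $g_x\rho(x)$ for $x\in V\cup E$, whereas the equivalence $\sim$ additionally requires $\sigma_i=(g_i^{(2)})^{-1}g_i^{(1)}\in\rho(i)$ for every incidence $i$, and this is not part of the data of $s_M$. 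Your parenthetical justification (``since $\sigma_i\in\rho(i)\subseteq\rho(x)$ together with $\sigma_x\in\rho(x)$ forces $\sigma_i^{-1}\sigma_x\in\rho(x)$'') presupposes $\sigma_i\in\rho(i)$, which is exactly what has to be proved; what it actually verifies is the easier fact that any tuple with all components in the respective subgroups is automatically a motion.

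The missing step is the following short computation, which uses that both $M_1$ and $M_2$ are motions. For $i=v*e$,
\begin{equation*}
(g_i^{(2)})^{-1} g_i^{(1)} = \left((g_i^{(2)})^{-1} g_v^{(2)}\right)\left((g_v^{(2)})^{-1} g_v^{(1)}\right)\left((g_v^{(1)})^{-1} g_i^{(1)}\right) \in \rho(v),
\end{equation*}
since the first and third factors lie in $\rho(v)$ by the motion condition for $M_2$ and $M_1$ respectively, and the middle factor lies in $\rho(v)$ because $s_{M_1}(v)=s_{M_2}(v)$. The same argument with $e$ in place of $v$ gives $(g_i^{(2)})^{-1}g_i^{(1)}\in\rho(e)$, hence $(g_i^{(2)})^{-1}g_i^{(1)}\in\rho(v)\cap\rho(e)=\rho(i)$. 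With this inserted, your argument is complete and coincides with the proof given in the paper.
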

\begin{proof}
    First we show that for any motion $M$, the map $s_M:V(I(\Gamma))\rightarrow V(X)$ is a graph homomorphism.  
    To this end, let $M=(g_{x})_{x\in V\cup E\cup I}$ be a motion of $\rho(\Gamma)$. Let $i=x*y$ be an incidence. We need to show that $s_M(x)s_M(y)$ is an edge. Since the group elements $(g_x)_{x\in V\cup E\cup I}$ define a motion one has for any incidence $i= v* e$:
    \begin{equation*}
    \begin{array}{cc}
     g_i^{-1}g_v \in \rho(v), & g_i^{-1}g_e \in \rho(e).
    \end{array}
    \end{equation*}
    Thus, one has
        \begin{equation*}
    \begin{array}{cc}
        g_i\rho(v)= g_v\rho(v),&
        g_i\rho(e) = g_e\rho(e),
    \end{array}
    \end{equation*}
    and we see that $s_M(x)= g_x\rho(x),s_M(y) = g_y\rho(y)$ are the endpoints of the edge $g_i\rho(i)$. Thus, $s_M$ is a graph homomorphism. We see that $$q\circ s_M(x) = q(g_x \rho(x)) = x,$$
    and hence $s_M$ is a section.
    We now show $s_M = s_{M'}$ if and only if $M \sim M'$. Suppose that $s_M= s_{M'}$, where $M=(g_x)_{x\in P \cup L\cup I}$ and $M'=(g_x')_{x\in V \cup E\cup I}$. Then for every $x\in V\cup E$ one has
    \begin{equation*}
        g_x \rho(x) = g_{x}'\rho(x),
    \end{equation*}
    and thus $g_x^{-1}g_{x}' \in \rho(x)$ for all $x\in V\cup E$. One then also has for $i=v* e$
    \begin{align*}
        g_{i}^{-1} g_{i}' &= g_i^{-1} g_v g_v^{-1}g_v' g_v'^{-1}g_i'  \in \rho(v)\\
        g_{i}^{-1} g_{i}' &= g_i^{-1} g_{e} g_e^{-1}g_e' g_e'^{-1}g_i'  \in \rho(e),
    \end{align*}
    and thus $g_i^{-1}g_i' \in \rho(i)$. This means one has $M \sim M'$. Conversely if $M = (g_x)_{x\in V\cup E \cup I} \sim M '=(g_x')_{x\in V\cup E \cup I} $, then for all $x\in V\cup E \cup I$ one has $g_x \rho(x) = g_{x}'\rho(x)$, and hence $s_M= s_{M'}$.
    
    Finally, let us show that $\varphi$ is surjective. Given any section $s: I(\Gamma) \rightarrow X$, one gets for every $x\in V\cup E$ a coset $g_x\rho(z)$, for some $z\in  V\cup E$. One has $$x= q(s(x)) = q(g_x\rho(z))=z.$$ In this way, we get a $g_x \in G$, for every $x\in  V\cup E$. For any incidenct $v$ and $e$, $g_v \rho(v)$ and $g_e \rho(e)$ are the endpoints of an edge of $X$, thus there must exist a $g_i\rho(i)$ such that $g_i\rho(x) = g_x\rho(x)$ for both $x=v$ and $x=e$. Thus we get a group element $g_i\in G$ for each incidence, and one has $g_i^{-1}g_x\in \rho(x)$. Hence, these group elements define a motion $M:= (g_x)_{x\in V\cup E\cup I}$, and one has $s_M =s$, which shows that $\varphi$ is surjective.
\end{proof}

\begin{remark}
 The results from \cite[Chapter 1, section 5.4]{Serre1980} can be applied to relate $X$ to Bass-Serre theory, when $X$ is connected. We have seen that $G$ acts on $X$, and one has a graph isomorphism $X/G \cong I(\Gamma)$. One can check that the graph of groups constructed on $X/G$ is the original graph of groups on $I(\Gamma)$. Then, there is some normal subgroup $N$ of the fundamental group $\pi_1$ of the graph of groups such that $G \cong \pi_{1}/N$, and $X \cong T/N$ where $T$ is the universal cover of $I(\Gamma)$.
\end{remark}

\section{Examples}\label{sec:examples}

\subsection{Bar-joint frameworks}\label{sec: bar-joints}
\subsubsection{Equivalence of definitions}
Given a $d$-dimensional bar-joint framework $(\Gamma,p)$, we can associate to $(\Gamma, p)$ a graph-of-groups realisation $\rho_p(\Gamma)$ in the Euclidean group $E(d)$. We define $\rho_p$ by
\begin{equation*}
\begin{array}{clc}
    v&\mapsto \Stab_{E(d)}(p(v)) &\text{ for vertices $v\in E$}\\
    e&\mapsto  \Stab_{E(d)}(p(v)) \cap \Stab_{E(d)}(p(w)) &\text{ for edges $e=vw$}
\end{array}
\end{equation*}
For incidences, $v*e$, where $e=vw$ we get $\rho_{p}(v*e)= \rho_{p}(e)$, since $$\rho_{p}(v*e) = \rho_{p}(v)\cap \rho_{p}(e) = \rho_{p}(v)\cap \rho_{p}(v) \cap \rho_{p}(w) = \rho_{p}(v) \cap \rho_{p}(w)  =\rho_{p}(e).$$ This is a Lie graph-of-groups realisation, since all the stabilisers are closed Lie subgroups. We now make precise the relation between the standard definitions in rigidity theory, and the definitions using graphs of groups. We will need the following two facts.

\begin{lemma}\label{dist-group}
        For any $x_1,y_1, x_2, y_2\in \mathbb{R}^{d}$ one has $\Vert x_1- y_1\Vert = \Vert x_2- y_2\Vert$ if and only if there exists a $g\in E(d)$ with $x_2 =g\cdot x_1$, and $y_2 =g\cdot y_1$.
\end{lemma}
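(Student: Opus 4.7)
The plan is to prove the two implications separately. The $(\Leftarrow)$ direction is immediate: every element $g\in E(d)$ is by definition an isometry of $\mathbb{R}^d$, so if $x_2=g\cdot x_1$ and $y_2=g\cdot y_1$, then
\begin{equation*}
\Vert x_2-y_2\Vert = \Vert g\cdot x_1- g\cdot y_1\Vert = \Vert x_1-y_1\Vert.
\end{equation*}

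For the $(\Rightarrow)$ direction, assume $\Vert x_1-y_1\Vert = \Vert x_2-y_2\Vert$. I will construct $g$ as a composition of a translation and an orthogonal map. First let $t\in E(d)$ be the translation by $x_2-x_1$, so that $t\cdot x_1 = x_2$. Setting $u:= t\cdot y_1 - x_2 = y_1 - x_1$ and $v:= y_2-x_2$, the hypothesis gives $\Vert u\Vert = \Vert v\Vert$. It then suffices to find an orthogonal transformation $O\in O(d)$ with $O\cdot u = v$, since the affine map $g\colon z\mapsto O\cdot (z-x_2) + x_2$ composed with $t$ will send $x_1\mapsto x_2$ (as $O$ fixes the origin used for conjugation) and $y_1\mapsto y_2$, and will lie in $E(d)$.

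The existence of such $O$ is the only content requiring care. If $u=v=0$, take $O=\mathrm{id}$. Otherwise $u$ and $v$ are two nonzero vectors of the same norm; complete $u/\Vert u\Vert$ and $v/\Vert v\Vert$ each to an orthonormal basis of $\mathbb{R}^d$ and let $O$ be the orthogonal transformation realising the corresponding change of basis. This step is essentially the only obstacle, and it is standard linear algebra (the transitivity of $O(d)$ on vectors of fixed norm). Composing as above yields $g\in E(d)$ with the required properties.
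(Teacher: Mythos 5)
Your proof is correct and follows essentially the same route as the paper: the reverse direction is immediate from $E(d)$ consisting of isometries, and the forward direction reduces by a translation to finding an orthogonal map sending one vector to another of equal norm, obtained by completing each normalised vector to an orthonormal basis. The only difference is cosmetic (you translate $x_1$ to $x_2$ and conjugate the rotation about $x_2$, whereas the paper normalises both base points to the origin), so the two arguments coincide in substance.
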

\begin{proof}
    The 'if' direction is obvious by definition, so we only need to prove 'only if'. 
    
    By applying a translation, we may assume that $x_1 =x_2 = 0$. Then it suffices to find an orthogonal matrix $T$ such that $T x_2 = y_2$. If $x_2 = 0$, then $y_2 = 0$, and there is nothing to show. Otherwise, expand $\{ \frac{x_2}{\Vert x_2 \Vert} \}$ to an (ordered) orthonormal basis $B_1$, and expand $\{ \frac{y_2}{\Vert y_2 \Vert} \}$ to an (ordered) orthonormal basis $B_2$, and let $T$ be the matrix which takes $B_1$ to $B_2$.
\end{proof}

\begin{lemma}\label{E(d)-self_normalising}
Let $d\geq 2$. Any point $x\in \mathbb{R}^{d}$ is determined by $\Stab_{E(d)}(x)$, i.e. $$\textup{Fix}(\Stab_{E(d)}(x))= \{x\}.$$
\end{lemma}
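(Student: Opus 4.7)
The plan is to unpack the stabiliser of $x$ concretely as a copy of the orthogonal group acting around $x$, and then reduce the fix-set question to the standard fact that $O(d)$ acts on $\mathbb{R}^d$ with no nonzero fixed vector.

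First I would observe that if $\tau_x \in E(d)$ denotes the translation by $x$, then conjugation by $\tau_x$ sends $\textup{Stab}_{E(d)}(0)$ isomorphically onto $\textup{Stab}_{E(d)}(x)$. Since $\textup{Stab}_{E(d)}(0) = O(d)$, every element of $\textup{Stab}_{E(d)}(x)$ has the explicit form $g_O : y \mapsto O(y - x) + x$ for a unique $O \in O(d)$, and conversely each such $g_O$ fixes $x$. This gives a concrete parametrisation of the whole stabiliser.

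Now suppose $y \in \textup{Fix}(\textup{Stab}_{E(d)}(x))$. Then $g_O(y) = y$ for every $O \in O(d)$, which on rearrangement reads
\begin{equation*}
    O(y-x) = y-x \qquad \text{for every } O \in O(d).
\end{equation*}
Thus $y-x$ is a vector in $\mathbb{R}^d$ fixed by the entire action of $O(d)$. The remaining step is to argue that, for $d \geq 2$, this forces $y - x = 0$: given any nonzero $v \in \mathbb{R}^d$, one may choose a $2$-plane $\Pi$ containing $v$ and let $O$ be the rotation of $\Pi$ by $\pi$ extended by the identity on $\Pi^\perp$; then $O \in O(d)$ but $Ov = -v \neq v$. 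Hence $y - x = 0$ and $y = x$, as required.

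There is no real obstacle; the only subtlety is that the hypothesis $d \geq 2$ is used precisely to guarantee the existence of a nontrivial rotation moving any given nonzero vector (in $d = 1$ one would instead invoke the reflection $-\textup{id} \in O(1)$, but that is not needed here).
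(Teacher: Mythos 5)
Your proof is correct, but it takes a different route from the paper's. The paper deduces the lemma from its Lemma \ref{dist-group}: given $y\neq x$, it picks a second point $z\neq y$ with $\Vert x-y\Vert=\Vert x-z\Vert$ (e.g.\ $z=x-(y-x)$) and invokes that lemma to produce some $g\in\Stab_{E(d)}(x)$ with $g\cdot y=z$, so $y\notin\textup{Fix}(\Stab_{E(d)}(x))$; the existence of $g$ is thus outsourced to the earlier orthonormal-basis-extension argument. You instead parametrise the stabiliser explicitly as the $\tau_x$-conjugate of $O(d)$, reduce the question to the linear statement that $O(d)$ fixes no nonzero vector of $\mathbb{R}^d$ when $d\geq 2$, and exhibit a concrete witness (rotation by $\pi$ in a $2$-plane through $v$). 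Your version is self-contained and makes the role of the hypothesis $d\geq 2$ more transparent; the paper's version is shorter in context because Lemma \ref{dist-group} is needed anyway for Theorem \ref{equivalent_iff_motion} and so comes for free. Both arguments are complete and there is no gap in yours.
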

\begin{proof}
    By Lemma \ref{dist-group}, for any $x,y,z\in \mathbb{R}^{d}$, with $\Vert x - y \Vert = \Vert x - z \Vert$, there exists a $g$ such that $g \cdot x =x$, $g\cdot y=z$. The condition $g\cdot x = x$, says that $g\in \Stab_{E(d)}(x)$, and $g\cdot y=z$ then implies that $y\notin \textup{Fix}(\Stab_{E(d)}(x))$ if $y\neq z$. Since for any $y\in \mathbb{R}^{d}$ such that $y\neq x$, there exists a distinct $z\in \mathbb{R}^{d}$ with $\Vert x-y\Vert = \Vert x - z\Vert$ (for instance, $z = x - \left(y-x\right)$), it the follows that $y\notin \textup{Fix}(\Stab_{E(d)}(x))$ for all $y\in \mathbb{R}^{d}$ with $y\neq x$, and hence $\textup{Fix}(\Stab_{E(d)}(x))= \{x\}$.
\end{proof}\\

We now use these two lemmas to show that two frameworks $p_1$ and $p_2$ are equivalent if and only there exists a motion between the associated graphs of groups.

\begin{theorem}\label{equivalent_iff_motion}
Let $d\geq 2$. Suppose we are given a graph $\Gamma=(V,E)$, and two bar-joint frameworks defined by $p_1: V\rightarrow \mathbb{R}^{d}$ and $p_2: V\rightarrow \mathbb{R}^{d}$. If $(\Gamma,p_1)$ and $(\Gamma,p_2)$ are equivalent, then there exists a motion $M: \rho_{p_1}(\Gamma)\rightarrow \rho_{p_2}(\Gamma)$, where $\rho_{p_1}, \rho_{p_2}$ are defined as above in this section. Conversely, if there is a motion $M: \rho_{p_1}(\Gamma) \rightarrow \rho'(\Gamma)$, then $\rho'(\Gamma)= \rho_{p_2}(\Gamma)$ for some uniquely determined bar-joint framework $(\Gamma,p_2)$, and $(\Gamma,p_1)$ is equivalent to $(\Gamma,p_2)$.
\end{theorem}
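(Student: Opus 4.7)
The plan is to pass between bar-joint equivalence and graph-of-groups motions using two ingredients: Lemma \ref{dist-group} to turn an equal-distance condition into a single isometry acting on both endpoints of an edge, and Lemma \ref{E(d)-self_normalising} to recover a point from its stabiliser.

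For the forward direction, assuming $(\Gamma, p_1)$ and $(\Gamma, p_2)$ are equivalent, I would build the motion $M = (\sigma_x)_{x \in V \cup E \cup I}$ as follows. For each $v \in V$, choose any $\sigma_v \in E(d)$ with $\sigma_v \cdot p_1(v) = p_2(v)$; this is possible because $E(d)$ acts transitively on $\mathbb{R}^d$. For each edge $e = vw$, Lemma \ref{dist-group} applied to $\Vert p_1(v) - p_1(w)\Vert = \Vert p_2(v) - p_2(w) \Vert$ produces a single $\sigma_e \in E(d)$ with $\sigma_e \cdot p_1(v) = p_2(v)$ and $\sigma_e \cdot p_1(w) = p_2(w)$; then set $\sigma_i := \sigma_e$ for every incidence $i = v*e$. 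The motion condition at $e$ is trivial ($\sigma_i^{-1}\sigma_e = \textup{id}$), and at $v$ it reduces to the element $\sigma_e^{-1}\sigma_v$ fixing $p_1(v)$, which holds because $\sigma_e$ and $\sigma_v$ agree on $p_1(v)$. The standard identity $\Stab(x)^\sigma = \Stab(\sigma \cdot x)$ together with the fact that conjugation commutes with intersections then gives $\rho_{p_1}^M = \rho_{p_2}$.

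For the reverse direction, given a motion $M = (\sigma_x)$ with $\rho_{p_1}^M = \rho'$, I would define $p_2 \colon V \to \mathbb{R}^d$ by $p_2(v) := \sigma_v \cdot p_1(v)$. The identity $\rho'(v) = \Stab(p_1(v))^{\sigma_v} = \Stab(p_2(v))$ is immediate, so $\rho'(v) = \rho_{p_2}(v)$, and Lemma \ref{E(d)-self_normalising} guarantees that $p_2(v)$ is the unique point with this stabiliser, giving uniqueness of the framework. To match the edge groups and establish equivalence, the key is to exploit the incidence element $\sigma_i$: the motion conditions $\sigma_i^{-1}\sigma_v \in \Stab(p_1(v))$ and $\sigma_i^{-1}\sigma_e \in \Stab(p_1(v)) \cap \Stab(p_1(w))$ yield $\sigma_v \cdot p_1(v) = \sigma_i \cdot p_1(v) = \sigma_e \cdot p_1(v)$, and an analogous chain via the incidence $w*e$ gives $\sigma_e \cdot p_1(w) = p_2(w)$. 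Since $\sigma_e$ is an isometry, this forces $\Vert p_1(v) - p_1(w)\Vert = \Vert p_2(v) - p_2(w)\Vert$, proving equivalence, and the same computation shows $\rho'(e) = \Stab(p_2(v)) \cap \Stab(p_2(w)) = \rho_{p_2}(e)$.

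The main obstacle is in the reverse direction: we need to show that the edge element $\sigma_e$ acts on both endpoints consistently with the vertex elements $\sigma_v$ and $\sigma_w$. A priori $\sigma_v$ is constrained only through conjugation on $\Stab(p_1(v))$, which leaves substantial ambiguity in its action on the point $p_1(v)$; the incidence element $\sigma_i$ acts as the common intermediary that pins down the actions of $\sigma_v$ and $\sigma_e$ on $p_1(v)$, which is precisely the combinatorial content of having a graph-of-groups motion rather than separate elements at each vertex and edge.
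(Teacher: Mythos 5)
Your proposal is correct and follows essentially the same route as the paper: the forward direction builds the motion edge-by-edge from Lemma \ref{dist-group} with $\sigma_i:=\sigma_e$, and the converse defines $p_2(v):=\sigma_v\cdot p_1(v)$ and uses the incidence elements to force $\sigma_e$ to act consistently on both endpoints. The only (harmless) deviations are that you choose the vertex elements by transitivity rather than reusing $g_{e(v)}$ and translations for isolated vertices, and in the converse you conclude equivalence directly from $\sigma_e$ being an isometry rather than routing through Lemma \ref{dist-group} and Lemma \ref{normalizer-lemma}.
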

\begin{proof}
Suppose that $(\Gamma,p_1)$ and $(\Gamma,p_2)$ are equivalent. Then for every edge $e=vw\in E$, one has $\Vert p_1(v)- p_1(w)\Vert = \Vert p_2(v)- p_2(w)\Vert$, and hence by Lemma \ref{dist-group}, for every edge $e=vw$ there exists some $g_e\in E(d)$ such that $g_e \cdot p_1(v) = p_2(v)$ and $g_e \cdot p_1(w)=p_2(w)$. We define, for any isolated vertex $v$, the translation
\begin{equation*}
    t_v: \mathbb{R}^{d} \rightarrow \mathbb{R}^{d}: x\mapsto x+(p_2(v) - p_1(v)).
\end{equation*}
For vertices which are not isolated, we choose some $e(v)$ with $v*e(v)$.
Define a motion $M= (\sigma_{x})_{x\in V\cup E \cup I}$ by 
\begin{equation*}
\begin{array}{clc}
    \sigma_v&:= g_{e(v)} \text{ for non-isolated vertices $i=v$},\\
    \sigma_v&:= t_v \text{ for isolated vertices $i=v$},\\
    \sigma_e&:= g_e \text{ for edges $i=v*e$},\\
    \sigma_i&:= g_e \text{ for incidences $i=v*e$}.
\end{array}
\end{equation*}
We verify that this is a motion of $\rho_{p_1}$ and that $\rho_{p_1}^{M} =\rho_{p_2}$. To see that $M$ is a motion, let $i=v*e$ be an incidence. Then, for the edge $e$ it holds that
\begin{equation*}
    \sigma_{i}^{-1} \sigma_e = g_e^{-1} g_e = \textup{id}.
\end{equation*}
For the vertex $v$, one has
\begin{equation*}
    \sigma_{i}^{-1} \sigma_v = g_e^{-1} g_{e(v)},
\end{equation*}
and since $g_e\cdot p_1(v) =g_{e(v)}\cdot p_1(v) = p_2(v)$, it follows that $g_e^{-1}g_{e(v)}\in \Stab_{E(d)}(p_1(v))=\rho_{p_1}(v)$. Thus $M$ is a motion. We now show $\rho_{p_1}^{M}= \rho_{p_2}$. For any $e$, with $e=vw$, one has
\begin{equation*}
     \rho_{p_1}(v)^{g_e}=\Stab(p_1(v))^{g_e}= \Stab(g_e\cdot p_1(v))=\rho_{p_2}(v),
\end{equation*}
and similarly $\rho_{p_1}(w)^{g_e}=\rho_{p_2}(w)$.
Hence, for any $v\in V$, and any $e\in E$ with
$e=vw$ it holds that
\begin{align*}
    \rho_{p_1}(v)^{\sigma_v} &= \rho_{p_2}(v),\\
    \rho_{p_1}(e)^{\sigma_e} &= \rho_{p_1}(v)^{\sigma_e} \cap \rho_{p_1}(w)^{\sigma_e}\\
    &=\rho_{p_2}(v)\cap \rho_{p_2}(w) = \rho_{p_2}(e).
\end{align*}

For the converse, we suppose we are given a motion $M: \rho_{p_1}(\Gamma) \rightarrow \rho'(\Gamma)$. We note that $\rho'(v) = \rho_{p_1}(v)^{\sigma_{v}} = \Stab_{E(d)}(\sigma_v\cdot p_1(v))$, and we thus define $p_2: V\rightarrow \mathbb{R}^{d}: v\mapsto \sigma_v\cdot p(v)$. Since $\sigma_e^{-1}\sigma_v\in \rho(v)$, one has $\rho(v)^{\sigma_e}=\rho(v)^{\sigma_v}$. Thus, for edges, $e=vw$, one has $\rho'(e)=\rho(e)^{\sigma_e} = \rho(v)^{\sigma_e} \cap \rho(w)^{\sigma_e} = \Stab(p_2(v))\cap \Stab(p_2(w))$. One thus has $\rho_{p_2} =\rho'$, and by Lemma \ref{E(d)-self_normalising} and Lemma \ref{normalizer-lemma}, $(\Gamma,p_2)$ is the only framework such that $\rho_{p_2}=\rho'$. Let us show $(\Gamma, p_2)$ and $(\Gamma, p_1)$ are equivalent. To this end, note that for any edge $e=vw$, one has 
\begin{align*}
   \rho_{p_1}(v)^{g_e}&=\Stab_{E(d)}(g_e p_1(v)) =\rho_{p_2}(v)=\Stab_{E(d)}(p_2(v))\\
    \rho_{p_1}(w)^{g_e} &= \Stab_{E(d)}(g_e p_1(w)) = \rho_{p_2}(w)=\Stab_{E(d)}(p_2(w)).
\end{align*}
Hence by Lemma \ref{E(d)-self_normalising} and Lemma \ref{normalizer-lemma}, $g_e p_1(v) = p_2(v)$ and $g_e p(w) = p_2(w)$. Thus, by Lemma \ref{dist-group}, we see that $(\Gamma,p_1)$ and $(\Gamma,p_2)$ are equivalent.
\end{proof}\\

Congruent frameworks correspond to congruent graph-of-groups realisations, giving the following result. 
\begin{corollary}\label{gl_rig_E(d)}
    A bar-joint framework $(\Gamma,p)$ is globally rigid if and only if the associated graph-of-groups realisation $\rho_{p}(\Gamma)$ in $E(d)$ is globally rigid.
\end{corollary}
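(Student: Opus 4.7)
The plan is to derive the corollary from Theorem \ref{equivalent_iff_motion} by translating each side of the equivalence through the correspondence between equivalent frameworks and motions of the associated graph of groups, and then checking that \emph{congruent frameworks} correspond to \emph{congruent graph-of-groups realisations}. Both implications should follow once this correspondence between the two notions of congruence is in place.

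First I would record the key lemma that for any $\phi \in E(d)$ and any $v \in V$,
\begin{equation*}
\rho_p(v)^{\phi} = \Stab_{E(d)}(p(v))^{\phi} = \Stab_{E(d)}(\phi \cdot p(v)),
\end{equation*}
and similarly $\rho_p(e)^{\phi} = \Stab_{E(d)}(\phi \cdot p(v)) \cap \Stab_{E(d)}(\phi \cdot p(w))$ for $e=vw$. This immediately gives that if $(\Gamma, p)$ and $(\Gamma,q)$ are congruent via $\phi$ with $q = \phi \circ p$, then $\rho_p(x)^{\phi} = \rho_q(x)$ for every $x \in V\cup E$, so the realisations are congruent via the trivial motion $(\phi, \ldots, \phi)$. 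Conversely, if $\rho_p$ and $\rho_q$ are congruent via some $\sigma \in E(d)$, then $\Stab_{E(d)}(\sigma \cdot p(v)) = \Stab_{E(d)}(q(v))$ for every vertex, and Lemma \ref{E(d)-self_normalising} together with Lemma \ref{normalizer-lemma} forces $\sigma \cdot p(v) = q(v)$, i.e.\ $(\Gamma,p)$ and $(\Gamma,q)$ are congruent.

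With that bridge established, the two directions of the corollary are immediate. For the forward direction, assume $(\Gamma, p)$ is globally rigid and let $M$ be an arbitrary motion of $\rho_p(\Gamma)$ producing a new realisation $\rho_p^M = \rho'$. By Theorem \ref{equivalent_iff_motion}, $\rho' = \rho_q$ for a uniquely determined framework $(\Gamma, q)$ equivalent to $(\Gamma, p)$; global rigidity gives a Euclidean $\phi$ with $q = \phi \circ p$, and the bridge above shows $\rho_p^M$ is congruent to $\rho_p$. For the converse, assume $\rho_p(\Gamma)$ is globally rigid and let $(\Gamma, q)$ be any framework equivalent to $(\Gamma, p)$. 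Theorem \ref{equivalent_iff_motion} supplies a motion $M: \rho_p \to \rho_q$; global rigidity of $\rho_p$ then gives a $\sigma \in E(d)$ with $\rho_p(x)^{\sigma} = \rho_q(x)$ for all $x$, and the bridge again forces $\sigma \cdot p(v) = q(v)$ for all $v$, so $(\Gamma,p)$ and $(\Gamma,q)$ are congruent.

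There is no serious obstacle: all of the real work is in Theorem \ref{equivalent_iff_motion} and the two lemmas about $E(d)$ (Lemmas \ref{dist-group}, \ref{E(d)-self_normalising}) plus the general Lemma \ref{normalizer-lemma}. The only point that needs mild care is to verify that the \emph{congruence} on the graph-of-groups side (existence of a single $\sigma$ conjugating all $\rho_p(x)$ to $\rho_q(x)$) truly corresponds to a \emph{single} Euclidean transformation carrying $p$ to $q$, rather than a family of local transformations; this is exactly where self-normalising stabilisers come in, and it is what makes the argument run without ambiguity.
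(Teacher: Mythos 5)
Your proposal is correct and follows essentially the same route as the paper, which derives the corollary from Theorem \ref{equivalent_iff_motion} together with the observation that congruent frameworks correspond to congruent graph-of-groups realisations (the paper states this correspondence in one line; you supply the details via conjugation of stabilisers and Lemmas \ref{E(d)-self_normalising} and \ref{normalizer-lemma}, exactly as intended).
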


 We will now show that there is a flex of $(\Gamma, p)$ if and only if there exists a continuous motion of the associated graph of groups $\rho_p(\Gamma)$. To this end we will need the following continuous variant of Lemma \ref{dist-group}.

\begin{lemma}\label{continuous-dist-group}
Let  
    \begin{align*}
        &x: [0,1] \rightarrow  \mathbb{R}^{d}
        &y: [0,1] \rightarrow  \mathbb{R}^{d}
    \end{align*}
 be continuous curves. The distance $\Vert x(t) - y(t)\Vert$ is constant if and only if there exists a continuous curve $g: [0,1] \rightarrow  E(d)$ such that $g(t)\cdot x(0) = x(t)$, and $g(t)\cdot y(0)= y(t)$, and $g(0)=\textup{id}$.
\end{lemma}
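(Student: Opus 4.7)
The plan is to first establish the easy (reverse) direction by noting that every $g(t)\in E(d)$ is a Euclidean isometry, so if $g(t)\cdot x(0)=x(t)$ and $g(t)\cdot y(0)=y(t)$, then $\|x(t)-y(t)\|=\|x(0)-y(0)\|$ is constant in $t$.

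For the forward direction, I would split into cases according to the common distance $c:=\|x(0)-y(0)\|$. If $c=0$, then $x(t)=y(t)$ for all $t$, and one may take $g(t)$ to be the translation by $x(t)-x(0)$; this is continuous in $t$, lies in $E(d)$, and satisfies $g(0)=\textup{id}$. When $c>0$, the constant-distance hypothesis forces $x(t)\neq y(t)$ for all $t$, so the unit vector $u(t):=(y(t)-x(t))/c$ defines a continuous curve $u:[0,1]\to S^{d-1}$. I would then seek a continuous curve $R:[0,1]\to SO(d)$ with $R(0)=\textup{id}$ and $R(t)u(0)=u(t)$, and define
\[
g(t)(v) := R(t)(v-x(0))+x(t).
\]
This is an affine isometry, continuous in $t$, and direct computation gives $g(0)=\textup{id}$, $g(t)\cdot x(0)=x(t)$, and $g(t)\cdot y(0)=R(t)(c u(0))+x(t)=cu(t)+x(t)=y(t)$, as required.

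The crucial step, and the main obstacle, is producing the lift $R$. The cleanest approach is to observe that the stabiliser of $u(0)$ in $SO(d)$ is a copy of $SO(d-1)$, a closed Lie subgroup, so by Theorem \ref{fiber_bundle} the orbit map $\pi:SO(d)\to SO(d)/SO(d-1)\cong S^{d-1}$, $T\mapsto Tu(0)$, is a locally trivial fibre bundle. Locally trivial fibre bundles over paracompact bases are Hurewicz fibrations, so the continuous path $u:[0,1]\to S^{d-1}$ admits a continuous lift $R:[0,1]\to SO(d)$ with the prescribed initial value $R(0)=\textup{id}$, by the homotopy lifting property applied to the inclusion $\{0\}\hookrightarrow[0,1]$. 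A naive alternative would define $R(t)$ as the rotation through angle $\arccos(u(0)\cdot u(t))$ in the plane spanned by $u(0)$ and $u(t)$, acting as the identity on the orthogonal complement; this is continuous as long as $u(t)\neq -u(0)$, but degenerates at antipodal values, which is precisely the technical difficulty the fibre-bundle argument bypasses.
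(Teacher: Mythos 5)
Your proof is correct and follows essentially the same route as the paper: both reduce the problem (via translations) to lifting a path on a sphere through the fibre bundle from the orthogonal group to the sphere, invoking the path-lifting property, with the degenerate case $x(t)=y(t)$ handled separately by pure translations. The only cosmetic differences are that you use $SO(d)$ in place of $O(d)$ and normalise the difference vector to unit length rather than working on the sphere of radius $c$.
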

\begin{proof}
The 'if' direction is clear, so we only need to prove 'only if'.

Suppose then that we are given $x,y$ as in the statement. By applying a translation $\tau(t)$ at every $t\in [0,1]$, we may assume $x(t)=0$ for all $t\in [0,1]$, and then we replace $y$ by $y(t) - x(t)$. The map $y: [0,1] \rightarrow  \mathbb{R}^{d}$ then either has image lying in a sphere $S^{d-1}$, or $y(t)=x(t)$ for all $t$. In the latter case, we are done since then the curve is given by $t\mapsto \tau(t) - \tau(0)$.

Otherwise, we consider the map $O(d) \rightarrow S^{d-1}: g\mapsto g\cdot y(0)$. This is a fibre bundle, with fibre being the stabiliser of the point $y(0)$ by Theorem \ref{OrbitsKir} and Theorem \ref{fiber_bundle}. The proposition follows since fibre bundles have the path lifting property.
\end{proof}

\begin{theorem}\label{smooth_eq}
A continuous flex of a bar-joint framework $(\Gamma,p)$ corresponds to an equivalence class of equivalent continuous motions of $\rho_p(\Gamma)$. The bar-joint framework $(\Gamma, p)$ is locally rigid if and only if the graph of groups $\rho_p(\Gamma)$ is locally rigid.
\end{theorem}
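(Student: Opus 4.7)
The plan is to continuise the argument of Theorem \ref{equivalent_iff_motion}, using Lemma \ref{continuous-dist-group} to produce continuous curves in $E(d)$ from a continuous flex. Given $\gamma : [0,1] \to \mathbb{R}^{d\vert V\vert}$ with $\gamma(0) = p$, for each edge $e = vw$ Lemma \ref{continuous-dist-group} furnishes a continuous curve $g_e : [0,1] \to E(d)$ with $g_e(0) = \textup{id}$ simultaneously carrying $p(v), p(w)$ to $\gamma_v(t), \gamma_w(t)$; for each isolated vertex $v$ the translation $x \mapsto x + (\gamma_v(t) - p(v))$ is continuous in $t$, and for each non-isolated vertex $v$ I would fix an incident edge $e(v)$ and set $\sigma_v(t) := g_{e(v)}(t)$. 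Taking $\sigma_e(t) := g_e(t)$ and $\sigma_i(t) := g_e(t)$ for each incidence $i = v*e$, the resulting tuple is a motion at each $t$ by the computation in Theorem \ref{equivalent_iff_motion}, and continuity is inherited from the component curves. Conversely, from a continuous motion $(\sigma_x(t))$ I define $\gamma_v(t) := \sigma_v(t) \cdot p(v)$; this is continuous in $t$ and yields an equivalent framework to $p$ at each $t$ by Theorem \ref{equivalent_iff_motion}.

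To descend to a bijection between continuous flexes and equivalence classes of continuous motions, I would check well-definedness of both directions. Two choices of $g_e$ (or of the incident edge $e(v)$) in the forward construction differ pointwise by an element of $\rho_p(e)$ (respectively $\rho_p(v)$), so they give pointwise-equivalent, hence equivalent, continuous motions; conversely, equivalent motions induce the same flex because $\rho_p(v) = \textup{Stab}(p(v))$ fixes $p(v)$. A direct round-trip computation, using the identity $\rho_p(v*e) = \rho_p(e)$, then verifies the two maps are mutually inverse on equivalence classes.

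For the local rigidity statement, I would transfer the notion through the bijection. The essential observation is that a framework $\gamma(t)$ is congruent to $p$ via some $\phi_t \in E(d)$ if and only if the motion $\gamma^M(t)$ is equivalent to the trivial motion $(\phi_t, \ldots, \phi_t)$. The vertex part follows because $\phi_t \cdot p(v) = \sigma_v(t) \cdot p(v)$ forces $\phi_t^{-1}\sigma_v(t) \in \textup{Stab}(p(v)) = \rho_p(v)$; for edges and incidences, the motion conditions $\sigma_i^{-1}\sigma_v \in \rho_p(v)$ and $\sigma_i^{-1}\sigma_e \in \rho_p(e)$, together with the identity $\rho_p(i) = \rho_p(e) = \rho_p(v) \cap \rho_p(w)$, propagate this to $\phi_t^{-1}\sigma_e(t), \phi_t^{-1}\sigma_i(t) \in \rho_p(e)$. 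The converse direction is immediate, and applying the equivalence pointwise yields the desired equivalence of the two local rigidity notions. The only genuinely technical ingredient is the continuous path-lifting supplied by Lemma \ref{continuous-dist-group}; the rest is routine bookkeeping with the graph-of-groups definitions.
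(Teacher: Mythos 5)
Your proposal is correct and follows essentially the same route as the paper: the paper's proof also reduces to rerunning the argument of Theorem \ref{equivalent_iff_motion} with Lemma \ref{continuous-dist-group} in place of Lemma \ref{dist-group}, and then observes that pointwise equivalence to a congruence transfers local rigidity across the correspondence. You simply spell out the well-definedness and round-trip checks that the paper leaves implicit.
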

\begin{proof}
For the statement that flexes of bar-joint frameworks correspond to continuous motions of graphs of groups, proceed as in the proof of Theorem \ref{equivalent_iff_motion}, using Lemma \ref{continuous-dist-group} instead of Lemma \ref{dist-group}. One easily checks that equivalent motions of $\rho_{p}(\Gamma)$ define the same flex of $(\Gamma, p)$.

The statement about local rigidity now follows easily. Indeed, it is clear that motions such that for all $t$, $\gamma(t)$ is equivalent to $(g, \dots, g)$ for some $g\in E(d)$, correspond to flexes that are congruences for all $t \in [0,1]$.
\end{proof}\\

We also prove that infinitesimal rigidity of $(\Gamma, p)$ is equivalent to infinitesimal rigidity of its associated bar-joint framework $\rho_p(\Gamma)$. To compute dimensions, we need a lemma.
\begin{lemma}\label{dim_stab_E}
    Let $x_1, \dots x_n\in \mathbb{R}^{d}$, which span a $k$-dimensional affine subspace $U$. Then
    \begin{equation*}
        \dim\left(\bigcap_{i=1}^{n} \Stab_{E(d)}(x_i)\right) = \binom{d-k}{2}.
    \end{equation*}
\end{lemma}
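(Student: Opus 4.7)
The plan is to reduce the dimension count to the standard fact that $\dim O(m) = \binom{m}{2}$, in three short steps: (i) identify the intersection with a pointwise stabiliser of an affine subspace; (ii) translate so that this subspace is linear; (iii) realise the resulting stabiliser as $O(d-k)$.

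First I would observe that since every element of $E(d)$ acts by affine transformations on $\mathbb{R}^d$, any $g\in E(d)$ that fixes each $x_i$ also fixes every affine combination of the $x_i$, hence fixes the affine hull $U$ pointwise. The converse is trivial, so
\[
\bigcap_{i=1}^n \Stab_{E(d)}(x_i) \;=\; \{g \in E(d) \mid g|_U = \mathrm{id}_U\}.
\]
This replaces the dependence on $n$ and the specific points $x_i$ by a cleaner object depending only on the subspace $U$.

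Next I would reduce to the linear case. Fix any $u_0 \in U$ and let $\tau \in E(d)$ denote the translation by $-u_0$. Conjugation by $\tau$ is a Lie group automorphism of $E(d)$, so it preserves the dimension of any closed Lie subgroup, and it sends the pointwise stabiliser of $U$ to the pointwise stabiliser of the linear subspace $U - u_0$ of dimension $k$. Hence we may assume $0 \in U$, so that $U$ is a $k$-dimensional linear subspace of $\mathbb{R}^d$.

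Finally, since no nontrivial translation fixes any point, any element of $E(d)$ fixing $0$ lies in $O(d)$. An element of $O(d)$ that restricts to the identity on $U$ must also preserve the orthogonal decomposition $\mathbb{R}^d = U \oplus U^\perp$, acting trivially on $U$ and as an arbitrary orthogonal transformation on $U^\perp$. Identifying $U^\perp \cong \mathbb{R}^{d-k}$ gives a Lie group isomorphism with $O(d-k)$, which has dimension $\binom{d-k}{2}$. The only mild subtlety is the translation reduction, but once one notes that conjugation in a Lie group preserves dimensions of closed subgroups, the whole argument is routine linear algebra.
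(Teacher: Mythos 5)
Your proof is correct and follows essentially the same route as the paper: both reduce the intersection to the pointwise stabiliser of the affine hull $U$, normalise $U$ to a standard position using the $E(d)$-action, and then count dimensions. The only cosmetic difference is that the paper writes out the Lie algebra of the resulting stabiliser explicitly as a block of skew-symmetric matrices, whereas you identify the stabiliser at the group level as $O(d-k)$; both yield $\binom{d-k}{2}$.
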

\begin{proof}
    First we show that $\bigcap_{i=1}^{n} \Stab_{E(d)}(x_i) =\bigcap_{x\in U} \Stab_{E(d)}(x_i)$. The inclusion $\supseteq$ is obvious. Suppose that $g \in \bigcap_{i=1}^{n} \Stab_{E(d)}(x_i)$, and $g \cdot x = T x +b$. Then for any affine combination $\sum_{i=1}^{n}\lambda_i x_i$ (with $\sum_{i=1}^{n} \lambda_i = 1$)
    \begin{align*}
        g \cdot (\sum_{i=1}^{n} \lambda_i x_i) &= \left( \sum_{i=1}^{n} \lambda_i T x_i\right) +b\\
        &= \sum_{i=1}^{n} \lambda_i \left(  T x_i + b\right)\\
        &= \sum_{i=1}^{n} \lambda_i x_i.
    \end{align*}
    Thus it suffices to compute the point-wise stabiliser of $U$ for some $k$-dimensional subspace $U$, and since $E(d)$ acts transitively on the $k$-subspaces, we may assume that $U=\textup{span}\{(0,\dots, 0), E_1, \dots, E_k\}$, where $E_{i} =(0, \dots,0,1,0,\dots, 0)$. Then, one can see that the Lie algebra of $\bigcap_{x\in U} \Stab_{E(d)}(x_i)$ is given by the set of all matrices of the form
    \begin{equation*}
        \begin{bmatrix}
            0_{k,k} & 0_{k, d - k} & 0_{k,1} \\
            0_{d-k,k} & S_{d-k, d-k} & 0_{d-k,1} \\ 
            0_{1,k} & 0_{1, d-k} & 0_{1,1}  
        \end{bmatrix},
    \end{equation*}
    where $0_{i,j}\in \mathbb{R}^{i\times j}$ is an all-zeros matrix, and $S_{d-k, d-k}^{t} = -S_{d-k, d-k}$. This algebra has dimension $\binom{d-k}{2}$, which completes the proof.
\end{proof}

\begin{theorem}\label{inf_eq}
Let $(\Gamma, p)$ be a bar-joint framework such that for any edge $vw$, $p(v)\neq p(w)$. Let $I=\ker(d(f_{\Gamma})_p)$ be the vector space of infinitesimal motions of $(\Gamma, p)$, and let $\pi(A)$ be the infinitesimal motions of $\rho_p$. Then,
\begin{equation*}
    \pi(A)\cong I.
\end{equation*}
Moreover, $(\Gamma, p)$ is infinitesimally rigid if and only if $\rho_p(\Gamma)$ is.
\end{theorem}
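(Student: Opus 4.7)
The plan is to construct an explicit linear isomorphism $\bar{\Phi}: \pi(A) \to I$, where $I = \ker(d(f_\Gamma)_p)$, and then identify the subspace of trivial infinitesimal motions on each side. Recall that the Lie algebra $\mathfrak{g} = \mathfrak{se}(d)$ can be identified with pairs $(S,b)$, with $S$ skew-symmetric and $b \in \mathbb{R}^d$, acting on $\mathbb{R}^d$ as the infinitesimal generator $x \mapsto Sx+b$, and that $\mathfrak{h}_v = \{(S,b) : Sp(v)+b=0\}$. I will define $\Phi: A \to I$ by
\begin{equation*}
\Phi\bigl((v_x)_{x \in V \cup E \cup I}\bigr)_v := S_v \cdot p(v) + b_v, \quad \text{where } v_v = (S_v, b_v).
\end{equation*}
The first step is to check this lands in $I$. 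For an edge $e = vw$ with incidences $i_1 = v*e$ and $i_2 = w*e$, the infinitesimal motion condition gives $v_e - v_v \in \mathfrak{h}_v$ and $v_e - v_w \in \mathfrak{h}_w$ (using $\mathfrak{h}_e \subseteq \mathfrak{h}_v, \mathfrak{h}_w$ and combining the two constraints through $v_{i_j}$). Hence $S_v p(v)+b_v = S_e p(v)+b_e$ and $S_w p(w)+b_w = S_e p(w)+b_e$, so the edge-displacement is $S_e(p(v)-p(w))$, which is orthogonal to $p(v)-p(w)$ by skew-symmetry; this is exactly the classical first-order edge constraint.

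Next I show that $\Phi$ descends to $\bar{\Phi}: \pi(A) \to I$ and is a bijection. Descent is immediate, since $\ker(\pi)$ consists of tuples with $v_v \in \mathfrak{h}_v$, which kill $p(v)$ by definition of the stabiliser algebra. For injectivity, if $\Phi((v_x)) = 0$ then $v_v \in \mathfrak{h}_v$ for every $v$; combined with $v_e - v_v \in \mathfrak{h}_v$ we get $v_e \in \mathfrak{h}_v \cap \mathfrak{h}_w = \mathfrak{h}_e$, and $v_i - v_e \in \mathfrak{h}_e = \mathfrak{h}_i$ (using that $\rho_p(v*e) = \rho_p(e)$) gives $v_i \in \mathfrak{h}_i$, so $(v_x) \in \ker(\pi)$. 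For surjectivity, given an infinitesimal motion $(w_v)_v$, I choose $v_v := (0, w_v)$ (a pure translation), and for each edge $e = vw$ I must choose $v_e = (S_e, b_e)$ with $S_e(p(v)-p(w)) = w_v - w_w$. The main technical point is the linear-algebra fact that, given $p(v) \neq p(w)$ and $w_v - w_w \perp p(v) - p(w)$, such a skew-symmetric $S_e$ exists — the skew-symmetric operators act transitively on vectors perpendicular to any given non-zero vector. Setting $b_e := w_v - S_e p(v)$ and $v_i := v_e$ for every incidence $i = v*e$ then yields an element of $A$ mapping to $(w_v)$. This is where the hypothesis $p(v) \neq p(w)$ is essential and is the only non-trivial obstacle in the construction.

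Finally, for the equivalence of infinitesimal rigidity notions, I identify $\bar{\Phi}(\pi(i(\mathfrak{g})))$ with the classical space of trivial infinitesimal motions. An element $i(v) = (v,\ldots,v)$ with $v = (S,b) \in \mathfrak{g}$ maps under $\Phi$ to $(Sp(v)+b)_v$, which is precisely the infinitesimal motion induced by the one-parameter subgroup of $E(d)$ generated by $v$. By Lemma \ref{dim_stab_E}, $\dim(\pi(i(\mathfrak{g}))) = \dim(\mathfrak{g}) - \dim\bigl(\bigcap_v \mathfrak{h}_v\bigr) = \binom{d+1}{2} - \binom{d-k}{2}$, which is exactly the target dimension appearing in the definition of infinitesimal rigidity of $(\Gamma,p)$. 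Hence $(\Gamma,p)$ is infinitesimally rigid, i.e. $\dim(I) = \binom{d+1}{2} - \binom{d-k}{2}$, if and only if $\pi(A) = \pi(i(\mathfrak{g}))$, i.e.\ $\rho_p(\Gamma)$ is infinitesimally rigid.
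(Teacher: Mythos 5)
Your proposal is correct and follows essentially the same route as the paper: both define the evaluation map $A\to I$, $(v_x)_x\mapsto (S_vp(v)+b_v)_v$, verify it lands in $\ker(d(f_\Gamma)_p)$ via skew-symmetry, compute that the kernel is exactly $\ker(\pi)=\prod_x\mathfrak{h}_x$, and finish with the dimension count $\dim(\pi(i(\mathfrak{g})))=\binom{d+1}{2}-\binom{d-k}{2}$ from Lemma \ref{dim_stab_E}. The only (immaterial) difference is in the surjectivity step, where the paper argues via a rank--nullity count for the per-edge map $\varphi_e:\mathfrak{e}(d)\to I_e$ while you construct the required skew-symmetric $S_e$ explicitly; both hinge on $p(v)\neq p(w)$ in the same way.
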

\begin{proof}
    We represent elements of $\mathfrak{e}(d)$, which is the Lie algebra of $E(d)$, by matrices
    \begin{equation*}
        \begin{bmatrix}
            S & b\\
            0 & 0
        \end{bmatrix},
    \end{equation*}
    where $S \in \mathbb{R}^{d\times d}$, and $S^{t}=-S$, and $b\in \mathbb{R}^{d}$. We consider the action of $\mathfrak{e}(d)$ on $\mathbb{R}^{d}$, which yields a vector field on $\mathbb{R}^{d}$ for all $w \in \mathfrak{e}(d)$. At a point $z\in \mathbb{R}^{d}$ the vector field is given by
    \begin{equation*}
       w\cdot z= S z + b,
    \end{equation*}
    where $S = -S^{t}$. Such vector fields define infinitesimal motions, as one can verify that for any $x,y\in \mathbb{R}^{d}$
    \begin{equation*}
        (x -y) \cdot (w\cdot x - w\cdot y) =0.
    \end{equation*}

    This implies that for any edge $e=v_i v_j$ we are given a map $\varphi_e: \mathfrak{e}(d) \rightarrow I_{e}: w \mapsto (w\cdot p(v_i) , w\cdot p(v_j))$, where $I_e$ denotes the infinitesimal motions of the single edge $e$, which has dimension $2d-1$. By Lemma \ref{dim_stab_E}, the stabiliser of an edge is an algebra of dimension $\binom{d+1}{2} - (2d - 1) = \binom{d-1}{2}$, and hence $\dim(\varphi_e(\mathfrak{g}))= 2d - 1 $, so $\varphi_e$ is surjective. Then, define a map
    \begin{equation*}
        \varphi: A \rightarrow I: (w_{v_{1}}, \dots w_{i_m}) \mapsto (w_{v_{1}} \cdot p(v))_{v\in V},
    \end{equation*}
    where $A$ is the space defined in section \ref{def Local_Inf}.
    Since for any incidence $v*e$, one has $w_{v*e} - w_{e}\in \mathfrak{h}_{e}$, and $w_{v*e} - w_{v}\in \mathfrak{h}_{v}$, and since 
    $\mathfrak{h}_e \cap \mathfrak{h}_v \subseteq \mathfrak{h}_v$ we see that
    \begin{equation}\label{eenlabel}
        w_{e} - w_{v}\in \mathfrak{h}_v,
    \end{equation}
    and hence $w_e \cdot p(v)= w_{v}\cdot p(v)$. Thus, for any edge $v_iv_j$ we see that $$\varphi(w_{v_1},\dots, w_{i_m})_{v_i}= w_{e}\cdot p(v_i) \text{ and } \varphi(w_{v_1},\dots, w_{i_m})_{v_j}= w_{e}\cdot p(v_j).$$ Hence, the morphism is well defined. We now show it is surjective. Let $(a_{v_1}, \dots, a_{v_n})\in I$ be an infinitesimal motion, with $a_{v_i}\in \mathbb{R}^{d}$ giving the components at the vertex $v_i$. For any isolated vertex $v$, pick some $w_v$ such that $w_{v}\cdot p(v) = a_{v_i}$. This exists since $\mathfrak{e}(d)/\mathfrak{h}_v \cong \mathbb{R}^{d}$. For edges $e=v_iv_j$ define $w_{e}$ to be such that $w_{e}\cdot p(v_i) = a_{v_i}$ and $w_{e}\cdot p(v_j) = a_{v_j}$, which exist since $\varphi_e$ is surjective. Then define $w_{v*e}=w_{e}$ and $w_{v}= w_{e(v)}$, where $e(v)$ is some edge having $v$ as an endpoint. It is easy to verify that this defines an infinitesimal motion $M$, and by construction $\varphi(M) = (a_{v_1}, \dots, a_{v_n})$, thus $\varphi$ is surjective.
    
    Let us compute $\ker(\varphi)$. If $\varphi(w_{v_1}, \dots, w_{i_m})=0$, then $w_v \cdot p(v) = 0$ for all $v$, and hence one has $w_v\in \mathfrak{h}_v$ for all $v$. Let $e$ be an edge with endpoints $v,u$. By equation $\eqref{eenlabel}$ for $e$ and $u,v$ we see that $w_{e}\in \mathfrak{h}_{v}\cap \mathfrak{h}_{u} = \mathfrak{h}_{e}$. Then, it similarly follows that $w_{v*e} \in \mathfrak{h}_{v*e}$, and hence we see that $\ker(\varphi) = \prod_{x\in V\cup E \cup I} \mathfrak{h}_{x}$, which shows that $\varphi$ induces an isomorphism $\pi(A)\cong I$. The claim about infinitesimal rigidity follows since $$\dim(\pi(i(\mathfrak{g}))) = 
        \binom{d+1}{2} - \binom{d - k}{2}\text{ where } k = \dim(\textup{span}(p(v))_{v\in V}),
    $$
    by Lemma \ref{dim_stab_E} and Remark \ref{trivialmotions}.
\end{proof}\\

For most of what we have done in this section, there is no problem in using the group $SE(d)$ instead of $E(d)$, since $SE(d)$ is the identity component of $E(d)$. The only exception is that Corollary \ref{gl_rig_E(d)} does not hold. One can check that Lemma \ref{continuous-dist-group} still holds when using the group $SE(d)$, and the proof of Theorem \ref{smooth_eq} and Theorem \ref{inf_eq} would be the same. When doing explicit computations for local rigidity in this model, using $SE(d)$ is simpler compared to $E(d)$.

\subsubsection{Maxwell Bound}

The bound from Theorem \ref{maxwellincLieRk2} is
\begin{equation*}
    \dim(\pi(A)) \geq d\vert V\vert + (2d-1) \vert E\vert - d\vert I \vert.
\end{equation*}
The bound from Theorem \ref{sparsity} says that for subsets $I'$ with $\cap_{i\in I'} \mathfrak{h}_i = \{0\}$
\begin{equation*}
    d \vert I'\vert \leq d\vert V(I')\vert + (2d-1) \vert E(I')\vert - \binom{d+1}{2}.
\end{equation*}
We apply this to subsets of the form $I'= V'\times E(V')$, such that $\cap_{v\in V'} \mathfrak{h}_v =\{0\}$, and $|V'|\geq d$. Then $\vert I' \vert = 2 \vert E(V')\vert$, and thus this implies 
\begin{equation*}
     \vert E(V')\vert \leq d\vert V'\vert - \binom{d+1}{2},
\end{equation*}
which is the usual Maxwell bound. The condition $\cap_{v\in V'} \mathfrak{h}_v =\{0\}$ corresponds to $p(V')$ affinely spanning $\mathbb{R}^{d}$, or a $(d-1)$-dimensional affine subspace. The following well-known example shows the bound does not give a sufficient condition for rigidity when $d\geq 3$. In the example we apply Theorem \ref{banana}.

    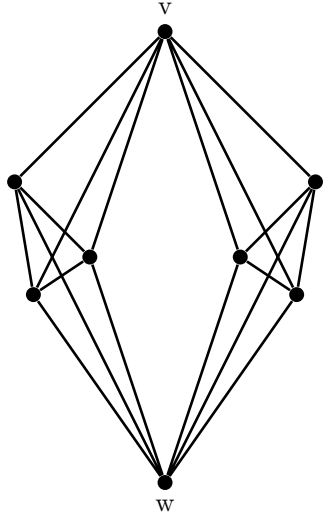
\begin{figure}[h]
    \centering
        \begin{tikzpicture}
    \node [circle,fill,inner sep=2pt][label = {v}] (V) at (0,3) {};
    \node [circle,fill,inner sep=2pt][label=below:{w}] (W) at (0,-3) {};
    \node [circle,fill,inner sep=2pt] (A) at (-1,0) {};
    \node [circle,fill,inner sep=2pt] (B) at (-2,1) {};
    \node [circle,fill,inner sep=2pt] (C) at (-1.75,-.5) {};
    \node [circle,fill,inner sep=2pt] (D) at (1,0) {};
    \node [circle,fill,inner sep=2pt] (E) at (2,1) {};
    \node [circle,fill,inner sep=2pt] (F) at (1.75,-.5) {};

    \draw [line width=1pt] (V) -- (A) -- (W) -- (B) -- (V) -- (C) -- (W);
    \draw [line width=1pt] (V) -- (D) -- (W) -- (E) -- (V) -- (F) -- (W);
    \draw [line width=1pt] (A) -- (B) -- (C) -- (A);
    \draw [line width=1pt] (D) -- (E) -- (F) -- (D);
    \end{tikzpicture}

        \caption{Double banana graph.}
        \label{fig:banana}
    \end{figure}
\begin{example}\label{Example_banana}
The graph, pictured as a framework in figure \ref{fig:banana}, called the double banana graph, provides an obstruction to generalising the Geiringer-Laman theorem to three dimensions. Namely, it satisfies the condition
\begin{align*}
   \vert E \vert &= 3\vert V\vert - 6\\
   \vert F \vert &\leq 3\vert V(F)\vert - 6 \text{ for every $F\subset E$, with $\vert F \vert \geq 3$},
\end{align*}
yet it is not generically rigid in $\mathbb{R}^{3}$.
If we consider the graph of groups $\rho_{p}(\Gamma)$, we apply Theorem \ref{banana}, with $\{v,w\}$ is a disconnecting set. The group $\rho_p(v)\cap \rho_p(w)$ contains a $1$-dimensional subgroup of rotations by Lemma \ref{dim_stab_E}. The nontrivial motions of this graph arise precisely in the way described in Theorem \ref{banana}. 
\end{example}

\subsubsection{Centres of rotation}
The condition for infinitesimal motions leads to some known facts concerning centres of motion for bar-joint frameworks in $\mathbb{R}^{2}$. See for instance \cite{PENNE2007419}, as well as \cite{RigProjlens}. In that work, it is expressed using the Grassman-Cayley algebra instead.

We note that Lie algebras of the stabiliser of a point $(x,y)\in \mathbb{R}^{2}$ is given using homogeneous matrices by
\begin{equation*}
    \begin{bmatrix}
        0 & -t & t  y \\
        t & 0 & -t x \\
        0 & 0 & 0
    \end{bmatrix}, t\in \mathbb{R}.
\end{equation*}
Given a one parameter subgroup of translations, we have the Lie algebra
\begin{equation*}
    \begin{bmatrix}
        0 & 0 & t a \\
        0 & 0 & t b \\
        0 & 0 & 0
    \end{bmatrix}, t\in \mathbb{R}
\end{equation*}
and we attach a point at infinity to this subgroup given by $[-b, a, 0].$ In this way, we have a $1-1$ correspondence between $1$-dimensional subalgebras $\mathfrak{h}_x \leq \mathfrak{e}(2)$ and points of the projective space $x\in \mathbb{R}P^2$. 
\begin{lemma}
    Let $x,y,z \in \mathbb{R}P^{2}$. Then $x,y,z$ are collinear if and only if $\mathfrak{h}_x + \mathfrak{h}_y + \mathfrak{h}_z$ has dimension $2$.
\end{lemma}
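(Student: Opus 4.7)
The plan is to exhibit the correspondence $x \mapsto \mathfrak{h}_x$ as a projective linear isomorphism $\mathbb{R}P^2 \cong \mathbb{P}(\mathfrak{e}(2))$, from which the equivalence follows at once by a determinant computation. Fix the basis
\begin{equation*}
R = \begin{bmatrix} 0 & -1 & 0 \\ 1 & 0 & 0 \\ 0 & 0 & 0 \end{bmatrix}, \quad T_1 = \begin{bmatrix} 0 & 0 & 1 \\ 0 & 0 & 0 \\ 0 & 0 & 0 \end{bmatrix}, \quad T_2 = \begin{bmatrix} 0 & 0 & 0 \\ 0 & 0 & 1 \\ 0 & 0 & 0 \end{bmatrix}
\end{equation*}
of $\mathfrak{e}(2)$. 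Reading off the formulas stated before the lemma, the generator of $\mathfrak{h}_{(x,y)}$ for a finite point is $R + y T_1 - x T_2$, while a point at infinity $[-b:a:0]$ gives the generator $a T_1 + b T_2$. Both cases unify into the single expression
\begin{equation*}
\Phi([X:Y:Z]) \;=\; \mathrm{span}\bigl(ZR + YT_1 - XT_2\bigr),
\end{equation*}
which is visibly the projective linear isomorphism induced by the invertible change of coordinates $(X,Y,Z) \mapsto (Z,Y,-X)$.

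With this in hand, I would translate collinearity through $\Phi$: three points $[X_i:Y_i:Z_i]$ are collinear in $\mathbb{R}P^2$ iff the matrix with rows $(X_i,Y_i,Z_i)$ has vanishing determinant. Swapping columns one and three and negating a column produces the matrix with rows $(Z_i,Y_i,-X_i)$, whose determinant agrees with the previous one up to sign. Hence collinearity is equivalent to the linear dependence of the three generators $Z_iR + Y_iT_1 - X_iT_2$ in $\mathfrak{e}(2)$.

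To conclude, observe that for distinct points $x,y,z$ the three subalgebras $\mathfrak{h}_x,\mathfrak{h}_y,\mathfrak{h}_z$ are pairwise different one-dimensional lines in $\mathfrak{e}(2)$, so any two of them already span a two-dimensional subspace. Consequently $\dim(\mathfrak{h}_x + \mathfrak{h}_y + \mathfrak{h}_z) \in \{2,3\}$, and this dimension equals $2$ precisely when the three generators are linearly dependent, i.e.\ precisely when $x,y,z$ are collinear.

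The only real content is setting up the map $\Phi$ uniformly for affine points and points at infinity; this is where the careful choice of coefficients $(Z,Y,-X)$ matters. Once that is in place the lemma reduces to the standard projective fact that three points are collinear iff their homogeneous coordinate vectors are linearly dependent, so no essential obstacle remains.
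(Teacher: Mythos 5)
Your proof is correct and takes essentially the same route as the paper: both reduce collinearity to linear dependence of the chosen generators of $\mathfrak{h}_x,\mathfrak{h}_y,\mathfrak{h}_z$ via the explicit coordinate correspondence between points of $\mathbb{R}P^{2}$ and one-dimensional subalgebras of $\mathfrak{e}(2)$. Your packaging of that correspondence as a single projective linear isomorphism $[X:Y:Z]\mapsto \mathrm{span}(ZR+YT_1-XT_2)$ handles the points at infinity uniformly, where the paper computes directly for finite points and dismisses the remaining cases as similar.
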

\begin{proof}
    We assume that $x=(x_1, x_2), y=(y_1, y_2), z=(z_1, z_2)$ are finite points. Other cases are dealt with similarly. The points $x,y$ and $z$ are collinear if and only if there is a nontrivial linear combination
    \begin{equation*}
        a_0 \begin{bmatrix}
            x_1 \\
            x_2 \\
            1 
        \end{bmatrix} + a_1 \begin{bmatrix}
            y_1 \\
            y_2 \\
            1 
        \end{bmatrix}  + a_2 \begin{bmatrix}
            z_1 \\
            z_2 \\
            1 
        \end{bmatrix} =0,
    \end{equation*}
     which holds if and only if
    \begin{equation*}
         \begin{bmatrix}
            0 & -a_0  & - a_0 x_2\\
            a_0 & 0  &  a_0 x_1\\
            0 & 0  &  0
         \end{bmatrix} +
         \begin{bmatrix}
            0 & -a_1 & - a_1 y_2\\
            a_1 & 0  & a_1 y_1\\
            0 & 0  &  0
         \end{bmatrix}
         + \begin{bmatrix}
            0 & -a_2 & - a_2 z_2\\
            a_2 & 0  & a_2 z_1\\
            0 & 0  &  0
         \end{bmatrix}
         =0.
    \end{equation*}
    Such a linear combination exists if and only if the algebra $\mathfrak{h}_x + \mathfrak{h}_y+ \mathfrak{h}_z$ has dimension $2$. 
\end{proof}\\

Note that for any nonzero $w_e\in \mathfrak{e}(2)$, $\langle w_e \rangle = \mathfrak{h}_x$ for some $x\in \mathbb{R}P^{2}$. Call this $x$ the centre of rotation of $w_e$. Then, a condition of the form 
\begin{equation*}
   - w_{e'} + w_{e}\in \mathfrak{h}_z \text{ if }v\in e, v\in e'
\end{equation*}
implies that $\textup{span}\{w_e, w_e'\} + \mathfrak{h}_z$ has dimension $2$. Hence, we see that the centres of rotations of $w_{e'}$ and $w_{e}$ must lie on a line with the location of $z$. This means that for any infinitesimal motion of a bar-joint framework $(\Gamma, p)$, the centres of rotations of edges $w_e$, and $w_{e'}$ must lie on a line with $p(v)$ whenever $v\in e$ and $v\in e'$. 

\subsection{Hypergraphs realised in projective space}
\label{sec:proj}

\subsubsection{Point and line configurations in the projective plane}
Let $\Gamma=(V,E)$ be a hypergraph.
Consider a realisation of $\Gamma$ as a point and line configuration in the real projective plane, so that the vertices are represented by points, and the edges are represented by lines. 

Let the Lie group of projective transformations $PGL(3,\mathbb{R})$ act upon the projective plane. The stabiliser of the line at infinity is isomorphic to the affine group $\textup{Aff}(2, \mathbb{R})$, and the stabiliser of any other line is also isomorphic to this group. By duality, the stabiliser of a point $p$ is also isomorphic to $\textup{Aff}(2, \mathbb{R})$. 
The stabiliser of the line at the infinity and a point on that line is the subgroup of $\textup{Aff}(2, \mathbb{R})$ that fixes the direction corresponding to the point, which has dimension $5$. 
The stabiliser of any pair consisting of an incident point and line pair is isomorphic to the same group, since $PGL(3,\mathbb{R})$ acts transitively on these pairs. The above can also be seen directly using coordinates.

Let $\rho$ be the graph-of-groups realisation of $\Gamma$ obtained by assigning to each $x\in X$ and to each incidence $x*y$, the stabiliser of the corresponding geometric object. 
These subgroups are all closed since they are stabilisers of a smooth group action. One can see that $\dim(H_x)=6$, $\dim(H_i)=5$, and $\dim(G)=8$, thus, applying Theorem \ref{maxwellincLieRk2}, we get the same bound as presented in \cite{proj}.

\begin{equation*}
\label{proj_bound}
\begin{array}{rcl}
    \dim(\pi(A))&\geq &\sum_{x\in V\cup E\cup I} \left(\dim(G) - \dim(\rho(x))\right) \\\\&&- \sum_{i=v*e \in I} \left( \left(\dim(G) - \dim(\rho(v))\right) + \left(\dim(G) - \dim(\rho(e))\right)\right) \\\\
    &=& \sum_{x\in V\cup E} 2 + \sum_{i\in I} 3  - \sum_{i\in I} (2+2)\\\\
    &=& 2  \vert V\vert + 2 \vert E\vert - \vert I \vert,
    \end{array}
\end{equation*}

\subsubsection{Configurations of $k$- and $\ell$-dimensional subspaces in projective $n$-space}
The above can be generalised by considering realisations inside of $PGL(n, \mathbb{R})$ with vertex groups corresponding to the stabilisers of $k-1$ and $(l-1)$-dimensional subspaces in $\mathbb{R}\mathbb{P}^{n-1}$, which we will treat as $k$ and $l$-dimensional vector subspaces of $\mathbb{R}^{n}$. 
Let $\Gamma=(V,E)$ be a hypergraph. Suppose that we are given an assignment
$$\begin{array}{cc}
\begin{array}{rccl}
    \psi_1: &V&\rightarrow &G(n,k)\\ &v&\mapsto& W_v
    \end{array}&
    \begin{array}{rccl}
    \psi_2:&E&\rightarrow& G(n,l)\\
    &e&\mapsto& U_{e},
\end{array}
\end{array}$$
where for any $m$ with $0\leq m \leq n$,  $G(n,m)$ is the set of $m$-dimensional subspaces of $\mathbb{R}^{n}$, and such that  $v\in e$ implies $\psi_1(v)\subset \psi_2(e)$.
This yields the following realisation as a graph of groups 
$$\begin{array}{cc}
\begin{array}{rccl}
   \rho: &V&\rightarrow &S(PGL(n, \mathbb{R})) \\
   &v&\mapsto &\Stab(W_v)
   \end{array}
   &
   \begin{array}{rccl}
   \rho: &E&\rightarrow & S(PGL(n, \mathbb{R})) \\
    &e&\mapsto &\Stab(U_e)
    \end{array}
\end{array}$$
The dimensions of these groups are
$$\begin{array}{lcl}
\dim(\rho(v))&=& n^{2} - k(n-k) - 1,\\
\dim(\rho(e))&=& n^{2} - l(n-l) - 1,\\
\dim(\rho(v*e))&=& n^{2} - k(n-k) - (l-k)(n-l) - 1,
\end{array}$$
 To see how to get these dimensions, we compute an example. Since $PGL(n,\mathbb{R}),$ acts transitively on $k$- dimensional linear subspaces, we may assume $W_v$ is given by $\textup{span}\{E_1,\dots E_k\}$, where $E_i$ is the $i$-th vector in a standard basis. Then the matrices which stabilise $W_v$, are given by 
\begin{equation*}
    \begin{bmatrix}
        A_{k,k} & B_{k, n-k}\\
        0_{n-k,k} & C_{n-k, n-k}
    \end{bmatrix}/\sim
\end{equation*}
Where $A_{k,k}\in Gl(k, \mathbb{R})$, $B_{k,n-k}\in\mathbb{R}^{k\times (n-k)}$ and $C\in Gl(n-k, \mathbb{R})$, and the equivalence relation is given by $M_1 \sim M_2 \iff \exists \lambda\in \mathbb{R}^{*}: \lambda M_1 = M_2$. It is clear that this manifold of matrices has dimension $n^{2} - k(n-k) - 1$. The other cases are derived similarly. 

Thus, for the bound from Theorem \ref{maxwellincLieRk2}, we find
\begin{equation*}
    \dim(\pi(A)) \geq k(n-k)\vert V\vert +  l(n-l)\vert E\vert - k(n-l)\vert I\vert.
\end{equation*}
When $n=3, k=1$ and $l=2$, we retrieve the bound for the projective plane. 

Let us make a few remarks about duality. 
 The automorphism given in projective coordinates by
$$D: PGL(n,\mathbb{R}) \rightarrow PGL(n,\mathbb{R}): A\mapsto (A^{t})^{-1},$$
is an involution. 
Let $U\leq \mathbb{R}^{n}$ be a subspace and consider the dual subspace 
$$U^{*}=\{ v \in \mathbb{R}^{n} ~ \vert ~ \langle v, x\rangle = 0 ~\forall x\in \mathbb{R}^{n}: x\in U\},$$
where $\langle \cdot, \cdot \rangle$ denotes the standard inner product. 
Then
\begin{align*}
    D(\Stab(U))&= \Stab(U^{*}),
\end{align*}
which can be seen using the identity $\langle x , A y\rangle = \langle A^{t}x , y \rangle.$ Suppose that we are given $\rho(\Gamma)$, with 
$$\begin{array}{cc}
\begin{array}{rccl}
   \rho: &V&\rightarrow &S(PGL(n, \mathbb{R})) \\
   &v&\mapsto &\Stab(W_v)
   \end{array}
   &
   \begin{array}{rccl}
   \rho: &E&\rightarrow & S(PGL(n, \mathbb{R})) \\
    &e&\mapsto &\Stab(U_e),
    \end{array}
\end{array},$$
where $W_v\in G(n,k)$ and $U_e\in G_{n,l}$. Letting $D_*(\rho)$ be as defined in Proposition \ref{GroupHom}, we see that $D_*(\rho)=\rho^{*}$, where $\rho^{*}$ is defined by
$$\begin{array}{cc}
\begin{array}{rccl}
   \rho^{*}: &V&\rightarrow &S(PGL(n, \mathbb{R})) \\
   &v&\mapsto &\Stab(W_v^{*})
   \end{array}
   &
   \begin{array}{rccl}
   \rho^*: &E&\rightarrow & S(PGL(n, \mathbb{R})) \\
    &e&\mapsto &\Stab(U_e^{*}).
    \end{array}
\end{array}$$
Applying Proposition \ref{GroupHom} and Lemma \ref{Dualhypergraph}, we see that there is a bijection between the motions $\rho(\Gamma)$ and $\rho^{*}(\Gamma^{*})$ and there is an isomorphism between the spaces of realisations of these hypergraphs. In particular, when considering configurations of points and lines in the projective plane, we see that determining the space of realisations of $\Gamma$ is the same thing as determining the space of realisations of $\Gamma^{*}$.

\subsection{Parallel redrawings and scenes}
\label{sec:parallel_redrawings_and_scenes}
As was observed by Crapo and Whiteley, the problem of determining the space of scenes over a picture is dual to the problem of determining the space of parallel redrawings with a given set of hyperplane slopes \cite{CRAPO1984278, Scenes/parallel}. We now give an explanation for this phenomenon using our approach. 
\subsubsection{Scenes}
\label{sec:scenes}
Let $\Gamma=(V,E)$, be a hypergraph. Let $Z=[0:\dots: 1]$, and define $$\pi : \mathbb{R}P^{d}\setminus \{Z\} \rightarrow \mathbb{R}P^{d-1}: [x_0:\dots: x_d]\mapsto [x_0: \dots: x_{d-1}].$$
A {\em picture} of $\Gamma$ is a function $p: V \rightarrow \mathbb{R}P^{d-1}$. A {\em scene} over the picture $p$ is a function $s: V \rightarrow \mathbb{R}P^{d-1} \setminus \{Z\}$, such that
\begin{enumerate}
    \item for every hyperedge $e$, the points $\{s(v)~\vert~ v\in e\}$ all span a hyperplane, and 
    \item for all $v$, we have $\pi(s(v)) = p(v)$. 
\end{enumerate}
In scene analysis, one is interested in determining the space of scenes over a picture. The trivial scenes, where all points lie on a single plane, always exist. We will now consider a certain group, where a subset of graph of groups realisations correspond to scenes over a fixed picture.

We consider the subgroup of the projective group $$G:=\{ A \in \textup{PGL}(d+1, \mathbb{R}) ~\vert ~\pi \circ A = \pi \}.$$ 
Using homogeneous coordinates, we can uniquely write elements of this group as
\begin{equation}
\label{eq:matrix}
    \begin{bmatrix}
    1 & 0 & \cdots & 0  \\
    0 & 1 & \cdots & 0 \\
    \vdots & \vdots &\ddots & \vdots  \\
    a_{0} & a_{1} & \cdots  &  a_{d}
   \end{bmatrix},
\end{equation}
with $a_{i}\in \mathbb{R}$ and $a_{d}\in \mathbb{R}^{*}$. 
The dimension of this group is $d+1$. The stabiliser of a point $[(x_0: x_1, \cdots: x_d)]$ consists of the subgroup of $G$ of matrices of the form (\ref{eq:matrix}) satisfying
\begin{equation}\label{stabscenes}
a_{0}x_0+a_{1}x_1+\cdots +(a_{d} - 1)x_d =0,
\end{equation}
which is of dimension $d$. The stabiliser of a hyperplane spanned by $d$ points is defined by $d$ independent equations of the form (\ref{stabscenes}), so it has dimension one.  
To any scene $s$, we associate a graph-of-groups realisation $\rho_s$ in $G$, given by
\begin{align*}
    \rho_{s}(v) &= \Stab_{G}(s(v))\\
    \rho_{s}(e) &= \Stab_{G}(\text{span}\{s(v)~\vert ~ v\in e\})= \cap_{v: v\in e} \rho(v) \\
    \rho_{s}(v*e) &= \rho(v)\cap \rho(e).
\end{align*}

One can prove using the method of proof of Theorem \ref{equivalent_iff_motion} that the resulting graphs of groups correspond to scenes over a picture, and a motion exists between two such graphs of groups if and only if the corresponding scenes are scenes over the same picture. 
 
Applying Theorem~\ref{maxwellincLieRk2}, we find the same bound as can also be found in  \cite{Scenes/parallel}
$$\begin{array}{rcl}
    \dim(A) &\geq &\sum_{v\in V} \dim(G/\rho(v)) + \sum_{e\in E} \dim(G/\rho(e))\\\\
    &&+ \sum_{i\in I} \dim(G/\rho(i)) - \sum_{i \in I} \left(\dim(G/\rho(e)) + \dim(G/\rho(v))\right)\\\\
    &=&\sum_{v\in V} 1+\sum_{e\in E} d +\sum_{i \in I} d-\sum_{i \in I} (1+ d)\\\\ 
    &=&\vert V\vert +d \vert E\vert +d\vert I \vert - d\vert I \vert -\vert I \vert=  \vert V\vert +d\vert E\vert - \vert I \vert.
\end{array}$$

\subsubsection{Parallel redrawings}\label{sec: parallel redrawings}
Given a hypergraph $\Gamma =(V,E)$, and a dimension $d$. Let $H(\mathbb{R}^{d})$ be the set of hyperplanes in $\mathbb{R}^{d}$. We define a realisation to be the triple $(\Gamma, p, h)$, where
\begin{equation*}
    p: V\rightarrow \mathbb{R}^{d}, h: E\rightarrow H(\mathbb{R}^{d}),
\end{equation*}
are such that if $v*e$, one has $p(v)\in h(e)$.
For any realisation $(\Gamma, p,h)$, the parallel redrawings are defined to be the realisations $(\Gamma, p', h')$, such that for any edge $e\in E$, $h(e)$ and $h'(e)$ have the same direction. 

To any realisation $r=(\Gamma, p, h)$, one assigns a graph of groups $\rho_r$ in the group $D$ consisting of dilations and translations
\begin{equation*}
    D = \{\phi \in \textup{Aff}(d, \mathbb{R}) ~| ~ \phi(x) = \lambda x + b, \lambda \in \mathbb{R}^{*}, b \in \mathbb{R}^{d}\}.
\end{equation*}
Note that this is also a subgroup of the projective group $PGL(d+1, \mathbb{R})$. One then sets
\begin{align*}
    \rho_{r}(e) &= \Stab_{D}(h(e))\\
    \rho_{r}(v) &= \Stab_{D}(p(v)) = \cap_{e: v\in e} \rho(e) \\
    \rho_{r}(v*e) &= \rho(v)\cap \rho(e).
\end{align*}
That is, $\rho(v)$ consists of the dilations with center $r(v)$, and $\rho_{r}(e)$ is a $d$ dimensional subgroup consisting of dilations with center at some point in $h(e)$ or translations in the direction of $h(e)$. Using matrices, we can describe the elements of $\rho_{r}(e)$ more formally, from which it directly follows that the dimension is $d$. Write
$$
P= 
\begin{bmatrix}
    \lambda & 0 & \cdots & b_1  \\
    0 & \ \lambda  & \cdots & b_2 \\
    \vdots & \vdots &\ddots & \vdots  \\
    0 & 0 & \cdots  &  1,
\end{bmatrix},
$$
with $\lambda \in \mathbb{R}^{*}, b_1, \cdots b_d\in \mathbb{R}$.
Suppose that $h(e)$ is given by the equation 
$$\alpha_1 x_1 +\cdots \alpha_d x_d = c,$$
with $[(\alpha_{1}: \dots: \alpha_d: -c)] \neq [(0:\dots:0:1)]$.

Then, the elements of $\Stab(h(e))$ are precisely those matrices $P$ such that
\begin{equation*}
    P^{t} \begin{bmatrix}
        \alpha_1\\
        \alpha_2\\
         \vdots \\
         \alpha_{d} \\
         -c
    \end{bmatrix}=\mu \begin{bmatrix}
        \alpha_1\\
        \alpha_2\\
         \vdots \\
         \alpha_{d} \\
         -c
    \end{bmatrix},
\end{equation*}
for some $\mu \in \mathbb{R}^{*}$. One verifies that these are the matrices $P$ which satisfy
\begin{equation*}
    b_1 \alpha_1 + b_2 \alpha_2 + \cdots + b_d\alpha_d + (\lambda - 1) c =0.
\end{equation*}

In the same way as in Theorem \ref{equivalent_iff_motion}, one can prove that the parallel redrawings are in 1-1 correspondence with their associated graphs of groups.

The Maxwell bound from Theorem \ref{maxwellincLieRk2} becomes
\begin{equation*}
   \pi(A) \geq d \vert V\vert +\vert E\vert - \vert I \vert.
\end{equation*}

\subsubsection{Duality between scenes and parallel redrawings}

We now show using our model that these two problems are dual to each other. We consider the group $G$ defined in section \ref{sec:scenes} and the group $D$ defined in \ref{sec: parallel redrawings} as subgroups of the projective group. We see that 
\begin{equation*}
\begin{bmatrix}
    1 & 0 & \cdots & 0  \\
    0 & 1 & \cdots & 0 \\
    \vdots & \vdots &\ddots & \vdots  \\
    a_{0} & a_{1} & \cdots  &  a_{d}
\end{bmatrix}^{T}
= \begin{bmatrix}
    1 & 0 & \cdots & a_0  \\
    0 & 1 & \cdots & a_1 \\
    \vdots & \vdots &\ddots & \vdots  \\
    0 & 0 & \cdots  &  a_{d}
\end{bmatrix}
\sim \begin{bmatrix}
    \frac{1}{a_{d}} & 0 & \cdots & a_0  \\
    0 & \frac{1}{a_{d}}  & \cdots & a_1 \\
    \vdots & \vdots &\ddots & \vdots  \\
    0 & 0 & \cdots  &  1
\end{bmatrix},
\end{equation*}
and hence transposing elements of $G$ yields of $D$
since the last matrix is clearly an element of $D$, and the first is a transpose of an element of $G$. In this way, we see that
\begin{equation*}
    \psi: G\rightarrow D: A\mapsto (A^{t})^{-1}
\end{equation*}
is a group isomorphism. Moreover, if $H$ is the hyperplane defined by
\begin{equation}\label{eq_Hyperplane}
\alpha_1x_1 + \cdots  + \alpha_d x_d = c, \text{ with } \alpha_i\neq 0, \text{ for some } i \in \{1,\cdots d\}
\end{equation}

and if $p = [\alpha_1: \cdots :\alpha_d : -c] \neq Z$, is the polar of $H,$ then we see that:
$$\psi(\Stab_{G}(p)) = \Stab_{D}(H).$$
We had defined $\rho_s(v) = \Stab_{G}(s(v))$ for $s(v)\in \mathbb{R}P^{d} \setminus \{Z\}$ and $\rho_r(e) = \Stab_{D}(h(e))$ for a some hyperplane $h(e)$ defined by an equation \eqref{eq_Hyperplane}. Furthermore, we had $\rho_{s}(e)= \cap_{v\in e}\rho_s(v)$ and $\rho_{r}(v)= \cap_{e: v\in e}\rho_r(e)$. Thus, by Lemma \ref{Dualhypergraph} and \ref{GroupHom}, it follows that one has a isomorphism of groupoids, and hence one has a one-to-one correspondence between graph-of-groups realisations corresponding to scenes over a given picture and graph-of-groups realisations corresponding to parallel redrawings to a given realisation. It is in this sense that scenes and parallel redrawings are dual problems. 

\subsection{Linearly constrained frameworks (and rigidity of frameworks drawn on surfaces}\label{sec:linearcons}

A linearly constrained bar-joint framework is a framework $p:V\rightarrow \mathbb{R}^{d}$ of a graph $\Gamma=(V,E)$, such that each point realising a vertex $v\in V$ is restricted to move in an affine subspace $L_v$ \cite{Linearconstraints}. 
A motivation for studying linearly constrained bar-joint frameworks comes from the theory of rigidity of frameworks on surfaces, or more generally, frameworks on embedded submanifolds in $\mathbb{R}^{d}$. By constraining the motions to lie in a certain subgroups, we can describe motions of linearly constrained frameworks using graphs of groups as well.

Given a bar-joint framework $(\Gamma,p)$, we define a graph-of-groups realisation $\rho_p(\Gamma)$, as in section \ref{sec: bar-joints}, by 
 $$\begin{array}{rccll}
 \rho_p: &\Gamma&\rightarrow&S(E(d))\\
 &v& \mapsto & \Stab(p(v)) &\mbox{ for }v\in V  \\
 &e & \mapsto &\Stab(p(e)) &\mbox{ for }e\in E\\
 &v*e & \mapsto &\Stab(p(e)) \cap \Stab(p(v)) &\mbox{ for }v*e\in I 
 \end{array}$$
Now, let $$H_v = \{g\in \text{SE}(d) ~\vert~ g\cdot L_v = L_v \},$$ which is a closed Lie group with Lie algebra $\mathfrak{h}_v$. If we consider a motion $\{\sigma_{x}\}_{x\in V\cup E\cup I}$ of the realisation $\rho$, such that $\sigma_v\in H_v$ for every vertex $v$, this precisely corresponds to a motion which respects the line constraints. As in Theorem \ref{maxwellincLieRk2}, we can find a lower bound on the dimension of infinitesimal motions. Let 
\begin{equation}
\begin{array}{cl}
 A=&\{(v_{x_1}, \dots v_{x_n}, v_{i_1} \dots v_{i_m}) \in \mathfrak{h}_{v_1} \times \cdots \times \mathfrak{h}_{v_{\vert V\vert}} \times  \mathfrak{e}(d)^{E +I} \\
 &\text{ for every element-incidence pair} (x,i) \text{ one has } -v_i + v_x\in \mathfrak{g}_x \},
\end{array}
\end{equation}
where $\mathfrak{g}_x$ is the Lie algebra of $\rho(x)$, and $\mathfrak{e}(d)$ is the Lie algebra of $E(d)$. Using the same method of proof as in Theorem \ref{maxwellboundLie} it follows that 
\begin{equation}\label{constrainedbound}
\begin{array}{cl}
    \dim(\pi(A)) \geq &\sum_{v\in V} \dim\left(H_v/(\rho_p(v)\cap H_v)\right) + \sum_{e\in E} \dim\left(G/(\rho_p(e)\right)\\ 
    &\\
    &+ \sum_{i\in I} \dim\left(G/(\rho_p(i)\right)
 -  \sum_{i\in I} \left( \dim\left(G/(\rho_p(v)\right)  + \dim\left( G/(\rho_p(e)\right)\right)
\end{array}
\end{equation}

where $\pi$ is defined as
$$\pi: \mathfrak{g} \times \mathfrak{g} \times \cdots \times \mathfrak{g}  \rightarrow \mathfrak{g} /\mathfrak{g}_{x_1} \times \mathfrak{g}/\mathfrak{g}_{x_2} \times \cdots \times  \mathfrak{g} /\mathfrak{g}_{i_{m}}.$$

Equation \ref{constrainedbound} can be written as 
\begin{align}
    \dim(\pi(A)) \geq &\sum_{v\in V} \dim(L_v) + (2d-1) \vert E \vert + ( 2d -1 )\vert I\vert -  (3d - 1) \vert I\vert.
\end{align}
Using $I=2\vert E\vert $, one has
\begin{align}
    \dim(\pi(A)) \geq &\sum_{v\in V} \dim(L_v) - \vert E \vert.
\end{align}
The fact that $\dim(L_v) = \dim(H_v/(\rho_p(v)\cap H_v))$ follows since $H_v/(\rho_p(v)\cap H_v)$ is diffeomorphic to the orbit of the point $v$ under the group action of $H_v$, which is the linear space to which $v$ is constrained.

Although stated differently, in \cite{Linearconstraints}, the authors give essentially the same bound, and furthermore a Geiringer-Laman type result is shown for graphs in which there are at least $m(d)$ linear constraints at each vertex, where $m(d)$ depends on $d$.

\subsection{Graphs realised in the complete graph on $n$ vertices and the unique colourability of graphs as a (global) rigidity problem}\label{sec: graph_colourings}
Let $G= S_n$ be the symmetric group acting on the complete graph on $n$ vertices, and let $\Gamma= (V,E)$ be a graph. We will show that certain realisations of $\Gamma$ as a graph of group in $S_n$ correspond to proper colourings of the graph $\Gamma$, and that the realisation is globally rigid if and only if the underlying graph is uniquely $n$-colourable.

A proper $n$-colouring of $\Gamma$ is an assignment of a colour to each vertex $v\in V$ such that no two adjacent vertices get the same colour. This corresponds to giving a graph homomorphism from $\Gamma$ to the complete graph $c: \Gamma \rightarrow K_n$. A graph is uniquely $n$-colourable, if for any two homomorphisms $c,c': \Gamma \rightarrow K_n$, there exists a $\phi\in \text{Aut}(K_n)\cong S_n$ such that $\phi \circ c = c'$. 

Let $c$ be a colouring and consider the associated realisation of $\Gamma$ as a graph of groups
\begin{align*}
    &\rho(v)= \Stab(c(v)).\\
    &\rho(e)=\rho(v)\cap \rho(w). 
\end{align*} 
Note that this implies that inverting the colours in an edge does not fix the edge. The following result can be proved by using the same proof technique as in Theorem \ref{equivalent_iff_motion}. 
\begin{proposition}
Let $\Gamma$ be a graph. For any proper $n$-colouring $c$, with $n\geq 3$, let $\rho_c:\Gamma\rightarrow S(S_n)$ be the corresponding graph-of-groups realisation of $\Gamma$ as described above. 
Let $M=\{\sigma_x\}_{x\in V\cup E \cup I}$ be a motion of $\rho_c$. Then $\rho(\Gamma)^M=\rho_{c'}$ for some uniquely determined proper colouring $c'$. For any pair of proper colourings $c$, $c'$ with respective graph of group realisations $\rho_c$ and $\rho_{c'}$, there exists a motion $M$ such that $\rho_c^{M}=\rho_{c'}$
\end{proposition}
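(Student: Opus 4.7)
My plan is to mimic the structure of Theorem~\ref{equivalent_iff_motion}, using $S_n$ acting on $\{1,\dots,n\}$ in place of $E(d)$ acting on $\mathbb{R}^d$, and the $2$-transitivity of the natural $S_n$-action in place of Lemma~\ref{dist-group}.

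For the forward direction, given a motion $M = (\sigma_x)_{x\in V\cup E\cup I}$ of $\rho_c$, I would define the candidate recolouring by $c'(v) := \sigma_v\cdot c(v)$. To verify $c'$ is proper, fix an edge $e=vw$ with its two incidences $i_1=v*e$ and $i_2=w*e$. The conditions $\sigma_{i_1}^{-1}\sigma_v\in \rho(v)=\Stab(c(v))$ and $\sigma_{i_1}^{-1}\sigma_e\in\rho(e)=\Stab(c(v))\cap\Stab(c(w))$ together force $\sigma_v\cdot c(v)=\sigma_e\cdot c(v)$; similarly from $i_2$, $\sigma_w\cdot c(w)=\sigma_e\cdot c(w)$. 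Since $\sigma_e$ is a bijection and $c(v)\neq c(w)$, we conclude $c'(v)\neq c'(w)$. The identity $\rho_c^M=\rho_{c'}$ then follows from $\Stab(k)^{\sigma}=\Stab(\sigma\cdot k)$ applied vertex-wise, and, for edges, combined with the fact that $\sigma_e$ maps the pair $\{c(v),c(w)\}$ to $\{c'(v),c'(w)\}$ so that conjugation distributes over the intersection defining $\rho_c(e)$.

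Uniqueness of $c'$ is where the hypothesis $n\ge 3$ is used. For such $n$, $\Stab(k)\cong S_{n-1}$ acts transitively on $\{1,\dots,n\}\setminus\{k\}$, so $\textup{Fix}(\Stab(k))=\{k\}$. Lemma~\ref{normalizer-lemma} then ensures that $\Stab(\cdot)$ is injective on points, so the group $\rho_c^M(v)$ recovers $c'(v)$ unambiguously. For the converse, given proper colourings $c$ and $c'$, I would build a motion explicitly: for each vertex $v$ pick $\sigma_v\in S_n$ with $\sigma_v\cdot c(v)=c'(v)$ (transitivity of $S_n$); for each edge $e=vw$ pick $\sigma_e\in S_n$ sending the ordered pair $(c(v),c(w))$ to $(c'(v),c'(w))$, which is possible by $2$-transitivity because both $c$ and $c'$ are proper; and for every incidence $i=v*e$ set $\sigma_i:=\sigma_e$. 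The motion conditions are then immediate: $\sigma_i^{-1}\sigma_e=\textup{id}\in\rho(e)$, and $\sigma_i^{-1}\sigma_v$ fixes $c(v)$ because both $\sigma_i=\sigma_e$ and $\sigma_v$ send $c(v)$ to $c'(v)$. The conjugation identity above then yields $\rho_c^M=\rho_{c'}$.

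The main obstacle is purely one of bookkeeping: the two incidences attached to a single edge must impose compatible constraints on the edge-level transformation, and it is precisely the choice $\sigma_i:=\sigma_e$ which makes these constraints automatic. No deeper group-theoretic input is required beyond $2$-transitivity of $S_n$ and the self-normalising property of point stabilisers for $n\ge 3$, both of which are elementary.
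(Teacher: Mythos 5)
Your proof is correct and follows exactly the route the paper intends: the paper gives no written proof of this proposition but states that it ``can be proved by using the same proof technique as in Theorem \ref{equivalent_iff_motion}'', and your argument is precisely that adaptation, with $2$-transitivity of $S_n$ playing the role of Lemma \ref{dist-group} and $\textup{Fix}(\Stab(k))=\{k\}$ for $n\geq 3$ supplying uniqueness via Lemma \ref{normalizer-lemma}. All the details check out, including the key bookkeeping step $\sigma_i:=\sigma_e$ and the use of properness of both colourings to invoke $2$-transitivity.
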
 

Furthermore, it is clear that two such realisations are congruent if and only if there exists some $\sigma \in S_n$ such that the two colourings are the same up to $\sigma$. From this we conclude that the globally rigid graphs are precisely the uniquely colourable graphs. 

Some of the basic properties of uniquely colourable graphs are very reminiscent of those of rigid graphs for the Euclidean group, as we now briefly summarise.

Except when stated otherwise, the following properties were pointed out by Harary, Hedetniemi, and Robinson \cite{HARARY1969264}.  If one takes the vertices of two different colours $$V_{ij}=\{v\in V ~\vert ~p(v)=i\text{ or } p(v)=j \},$$ then the induced subgraph $\Gamma[V_{ij}]$ is connected. To see why, notice that if this were not the case, one could swap out the colours $i$ and $j$ within one component, which would yield a nonequivalent colouring. Thus, for any uniquely colourable graph, there exists a subgraph which has an edge decomposition into $\binom{k}{2}$ trees such that any vertex $v$ is included into exactly $k-1$ trees. For $k=3$, the existence of such a tree decomposition implies  generic rigidity in the plane \cite{crapo1990generic}. From this decomposition, it follows that one has the following bound on the edges \cite{XUSHAOJI1990319}.
\begin{equation*}
    \vert E\vert \geq (k-1)\vert V \vert - \binom{k}{2}.
\end{equation*}
Moreover, if one cones a graph which is uniquely $k$-colourable, one gets a graph which is uniquely $k+1$ colourable. Coning is a procedure where one adds one vertex to  a graph $\Gamma$, adding edges to all other vertices, resulting in a new graph $\hat{\Gamma}$. This operation is known to have the property that $\Gamma$ is generically rigid in $d$-dimension if and only if $\hat{\Gamma}$ is generically rigid in $d+1$ dimensions, and similar results hold for global rigidity \cite{Connelly2009}. Also, if we take add a vertex to a uniquely $k$ colourable graph, connected to $k-1$ colour classes, the result is again uniquely colourable, which is analogous to the well known $0$-extension from rigidity theory.

This example can be generalised as follows. Let $\Lambda$ be a graph. A graph is uniquely $\Lambda$-colourable if there exists a unique graph homomorphism up to an automorphism of $\Lambda$. So, $\Gamma$ is uniquely $\Lambda$-colourable if for any two graph homomorphisms $f,g: \Gamma\rightarrow \Lambda$  there exists an automorphism of $\alpha: \Lambda\rightarrow \Lambda$ such that $f= \alpha \circ g$.  
When $\Lambda$ is an arc-transitive graph, this corresponds to rigidity of $\Gamma$ in $\text{Aut}(\Lambda)$. This problem was studied for odd cycles in the 80's \cite{lai1985unique}, \cite{LAI1989363}, and also more recently \cite{noteUniquecolorability}. 
\begin{example}
Theorem \ref{sections - motions} has a straightforward interepretation in this context. Suppose that we want to study $\{f: \Gamma \rightarrow \Lambda\}$, where $\Lambda$ is an arc-transitive graph. Let $X=(V,E)$ be the tensor product of $\Lambda$ and $\Gamma$ (also called categorical product or Kronecker product). It is defined by:
    \begin{align*} 
        V &= V(\Gamma) \times V(\Lambda)\\
        E &= \{(x_1, y_1), (x_2, y_2) ~\vert~ x_1x_2 \in E(\Gamma), y_1y_2 \in E(\Lambda)\}.
    \end{align*}
     Let
    $$q: V(X) \rightarrow V(\Gamma): (x, y)\mapsto x.$$
    The reader can verify that sections $s: \Gamma \rightarrow X$ of $q$ correspond to graph homomorphisms into $\Lambda$.
    Technically, the graph $X$ as constructed in Theorem $\ref{sections - motions}$ is isomorphic to the incidence graph of $\Lambda \times \Gamma$, and one would take the incidence graph $I(\Gamma)$ as well.
\end{example}
\section{Conclusions and open problems}
We have considered structural rigidity and flexibility in a purely group-theoretic setting, and established the basic terminology in terms of graphs of groups. We have showed that the model that we have presented applies to various problems in constraint geometry, and we now state some open problems.

\begin{enumerate}
    \item (Genericity) It would be interesting to describe precisely what the connection is between local and infinitesimal rigidity, as well as to explore when rigidity is a generic property. With respect to genericity, one may desire that 'almost all' graph-of-groups realisations of a hypergraph $\Gamma$ in a group $G$ have the same rigidity behaviour, so that either almost all realisations are rigid, or almost all realisations are flexible. The main difficulty then, is to suitable interpretation of the term 'almost all' in this context. 
    \item (Characterising rigidity) Whenever rigidity is a generic property of the graph, one can ask when the necessary condition in Theorem \ref{sparsity} is a sufficient condition for minimal rigidity, in terms of the subgroups of the graph. That is, under which conditions does the analogue of the Geiringer-Laman theorem hold? To answer such questions, it might be useful to look at inductive constructions of Henneberg type.
    \item (Connections to geometric group theory) It would also be interesting to explore further the connection with Bass-Serre theory. One question is: What information can be found in the subgroup $N$ as defined at the end of Section \ref{sec:Defs_hypergraphs}? 
\end{enumerate}

\bibliography{Bibliography}

\begin{thebibliography}{10}

\bibitem{asimow1978rigidity}
L.~Asimow and B.~Roth.
\newblock The rigidity of graphs.
\newblock {\em Transactions of the American Mathematical Society},
  245:279--289, 1978.

\bibitem{noteUniquecolorability}
A.\ Bonato.
\newblock A note on uniquely h-colorable graphs.
\newblock {\em Discussiones Mathematicae. Graph Theory}, Volume: 27(1):33--44,
  01 2007.

\bibitem{Connelly2009}
R.~Connelly and W.~J. Whiteley.
\newblock Global rigidity: The effect of coning.
\newblock {\em Discrete \&; Computational Geometry}, 43(4):717–735, August
  2009.

\bibitem{CRAPO1984278}
H.\ Crapo.
\newblock Concurrence geometries.
\newblock {\em Advances in Mathematics}, 54(3):278--301, 1984.

\bibitem{crapo1990generic}
H.~Crapo.
\newblock {\em On the generic rigidity of plane frameworks}.
\newblock PhD thesis, INRIA, 1990.

\bibitem{Linearconstraints}
J.\ Cruickshank, H.\ Guler, B.\ Jackson, and A.\ Nixon.
\newblock {Rigidity of Linearly Constrained Frameworks}.
\newblock {\em International Mathematics Research Notices},
  2020(12):3824--3840, 08 2018.

\bibitem{dewarthesis}
S.\ Dewar.
\newblock {\em The rigidity of countable frameworks in normed spaces}.
\newblock PhD thesis, Lancaster University, 2019.

\bibitem{NormedSpaces}
S.~Dewar.
\newblock Equivalence of continuous, local and infinitesimal rigidity in normed
  spaces.
\newblock {\em Discrete \& Computational Geometry}, 65:655--679, 04 2021.

\bibitem{AffineRigidity}
S.~Gortler, C.~Gotsman, L.~Liu, and D.~Thurston.
\newblock On affine rigidity.
\newblock {\em J. Comput. Geom.}, 4, 11 2013.

\bibitem{HARARY1969264}
F.\ Harary, S.~T.\ Hedetniemi, and R.W.\ Robinson.
\newblock Uniquely colorable graphs.
\newblock {\em Journal of Combinatorial Theory}, 6(3):264--270, 1969.

\bibitem{jordan2017global}
T.~Jord{\'a}n and W.~Whiteley.
\newblock Global rigidity.
\newblock In {\em Handbook of Discrete and Computational Geometry}, pages
  1661--1694. Chapman and Hall/CRC, 2017.

\bibitem{Kirillov}
A.~Kirillov Jr.
\newblock {\em An introduction to Lie Groups and Lie algebras}.
\newblock Cambridge university press, 2008.

\bibitem{MolecularConjecture}
N.~Katoh and S.~Tanigawa.
\newblock A proof of the molecular conjecture.
\newblock {\em Discrete and computational geometry}, 45:647–700, 2011.

\bibitem{lai1985unique}
H.-J.\ Lai.
\newblock Unique graph homomorphisms onto odd cycles.
\newblock Master's thesis, Wayne State University, 1985.

\bibitem{LAI1989363}
H.-J.\ Lai.
\newblock Unique graph homomorphisms onto odd cycles, ii.
\newblock {\em Journal of Combinatorial Theory, Series B}, 46(3):363--376,
  1989.

\bibitem{Laman1970}
G.~Laman.
\newblock On graphs and rigidity of plane skeletal structures.
\newblock {\em Journal of Engineering Mathematics}, 4(4):331--340, October
  1970.

\bibitem{NiOwPo2012}
A.~Nixon, J.~Owen, and S.~Power.
\newblock Rigidity of frameworks supported on surfaces.
\newblock {\em SIAM Journal on Discrete Mathematics}, 26(4), 2012.

\bibitem{NiOwPo2014}
A.~Nixon, J.~Owen, and S.~Power.
\newblock A characterization of generically rigid frameworks on surfaces of
  revolution.
\newblock {\em SIAM Journal on Discrete Mathematics}, 28(4):2008–2028, 2014.

\bibitem{RigProjlens}
A.\ Nixon, B.\ Schulze, and W.\ Whiteley.
\newblock Rigidity through a projective lens.
\newblock {\em Applied Sciences}, 11(24), 2021.

\bibitem{PENNE2007419}
Rudi Penne and Henry Crapo.
\newblock A general graphical procedure for finding motion centers of planar
  mechanisms.
\newblock {\em Advances in Applied Mathematics}, 38(4):419--444, 2007.

\bibitem{PollaczekGeiringer1927}
H.~Pollaczek-Geiringer.
\newblock \"{U}ber die gliederung ebener fachwerke.
\newblock {\em {ZAMM} - Zeitschrift f\"{u}r Angewandte Mathematik und
  Mechanik}, 7(1):58--72, 1927.

\bibitem{Decomposition}
P.~Schreck and P.~Mathis.
\newblock Geometrical constraint system decomposition: a multi-group approach.
\newblock {\em International Journal of Computational Geometry and
  Applications}, 16(5):431--442, 12 2006.

\bibitem{Serre1980}
J.-P.\ Serre.
\newblock {\em Trees}.
\newblock Springer Berlin Heidelberg, 1980.

\bibitem{SymmetryinRigidity}
G.~Stacey and R.~Mahony.
\newblock The role of symmetry in rigidity analysis: A tool for network
  localization and formation control.
\newblock {\em IEEE Transactions on Automatic Control}, 63(5):1313--1328, 2018.

\bibitem{stacey2016generalised}
G.~Stacey, R.~Mahony, and J.~Trumpf.
\newblock Generalised rigidity and path-rigidity for agent formations.
\newblock In {\em Proceedings of the 22nd International Symposium on
  Mathematical Theory of Networks and Systems}, 2016.

\bibitem{Scenes/parallel}
W.~Whiteley.
\newblock A matroid on hypergraphs with applications in scene analysis and
  geometry.
\newblock {\em Discrete \& Computational Geometry}, 4:75--95, 12 1989.

\bibitem{proj}
W.\ Whiteley.
\newblock Some matroids from discrete applied geometry.
\newblock In {\em Matroid Theory, (Seattle, WA, 1995), Contemporary
  Mathematics}, volume 197, page 171–311. American Mathematical Society,
  1996.

\bibitem{XUSHAOJI1990319}
{X. Shaoji}.
\newblock The size of uniquely colorable graphs.
\newblock {\em Journal of Combinatorial Theory, Series B}, 50(2):319--320,
  1990.

\end{thebibliography}

\end{document}